\numberwithin{equation}{section}
\newtheorem{theorem}{Theorem}[section]
\newtheorem{proposition}{Proposition}[section]
\newtheorem{lemma}{Lemma}[section]
\newtheorem{corollary}{Corollary}[section]
\theoremstyle{definition}
\newtheorem{dfn}{Definition}[section]
\newtheorem{remark}{Remark}[section]
\DeclareMathOperator*{\esssup}{ess\,sup}
\DeclareMathOperator{\sgn}{sgn}
\title{Solving an inverse heat convection problem\\ with an implicit forward operator by using\\ a Projected Quasi-Newton method}
\author{
  Dimitri Rothermel \\
  Department of Numerical Mathematics\\
  Saarland University\\
  Saarbr\"ucken, Germany \\
  \texttt{dimitri.rothermel@num.uni-sb.de} \\
       \And
  Thomas Schuster \\
  Department of Numerical Mathematics\\
  Saarland University\\
  Saarbr\"ucken, Germany \\
  \texttt{thomas.schuster@num.uni-sb.de} \\
}
\begin{document}
\maketitle

\begin{abstract}
We consider the quasilinear 1D inverse heat convection problem (IHCP) of determining the enthalpy-dependent heat fluxes from noisy internal enthalpy measurements. This problem arises in the Accelerated Cooling (ACC) process of producing thermomechanically controlled processed (TMCP) heavy plates made of steel. In order to adjust the complex microstructure of the underlying material, the Leidenfrost behavior of the hot surfaces with respect to the application of the cooling fluid has to be studied. Since the heat fluxes depend on the enthalpy and hence on the solution of the underlying initial boundary value problem (IBVP), the parameter-to-solution operator, and thus the forward operator of the inverse problem, can only be defined implicitly. To guarantee well-defined operators, we study two approaches for showing existence and uniqueness of solutions of the IBVP. One approach deals with the theory of pseudomonotone operators and so-called strong solutions in Sobolev-Bochner spaces. The other theory uses classical solutions in H\"older spaces. Whereas the first approach yields a solution under milder assumptions, it fails to show the uniqueness result in contrast to the second approach. Furthermore, we propose a convenient parametrization approach for the nonlinear heat fluxes  in order to decouple the parameter-to-solution relation and use an iterative solver based on a Projected Quasi-Newton (PQN) method together with box-constraints to solve the inverse problem. For numerical experiments, we derive the necessary gradient information of the objective functional and use the discrepancy principle as a stopping rule. Numerical tests show that the PQN method outperforms the Landweber method with respect to computing time and approximation accuracy.
\end{abstract}

\keywords{quasilinear inverse heat convection problem \and enthalpy-dependent heat fluxes \and implicit parameter-to-solution relation\and modeling the cooling process of steel plates \and strong vs. classical solutions \and PQN method with box-constraints}

\smallskip

\textbf{MSC 2010:} 35K61, 65M32

\smallskip

\section{Introduction}\label{sec:intro}
In this paper, we investigate a heat transfer model given by the initial boundary value problem (IBVP) for a 1D quasilinear parabolic second order equation
\begin{align}
    \label{IBVP1}
     u_t &= (\alpha'(u)u_x)_x,  &t \in I,\  x\in \Omega,\\
     \label{IBVP2}
    \alpha'(u)u_x &= \beta_0(u), &t \in I,\  x=0,\\
    \label{IBVP3}
    -\alpha'(u)u_x &= \beta_L(u), &t \in I,\  x=L,\\
    \label{IBVP4}
    u&=u_0, &t=0, x\in \bar{\Omega},
\end{align}
where $t\in I:=[0,T]$ for $T>0$ denotes the time and $x\in \Omega:=(0,L)$ for $L>0$ the space variable. The solution $u=u(t,x)$ of the IBVP is the so-called enthalpy. The subscripts $t$ and $x$ refer to the respective derivative, $u_0$ is the initial enthalpy distribution and the heat fluxes $\beta_0,\beta_L>0$ are enthalpy-dependent. For all $(t,x)\in I\times \bar{\Omega}$ the term $\alpha'(u(t,x))$ represents the thermal diffusivity of the underlying material, which is typically a function of temperature rather than enthalpy, see \eqref{alphaStrichGleichTLF}.

The IBVP is motivated by the modeling of the Accelerated Cooling (ACC) process of thermomechanically controlled processed (TMCP) heavy plates made of steel. In the TMCP process hot-rolled heavy plates are subjected to a strong cooling by applying water on both, bottom and top surface, which we refer to as $x=0$ and $x=L$, respectively. This leads to a complex crystalline microstructure of the material affecting the mechanical properties such as toughness, strength and weldability, cf. \cite{laepple}. By understanding this interaction one can vary the cooling plan, in order to adjust these steel properties in a desired manner. Economically, this is very efficient due to the saving of expensive alloying agents. Furthermore, the ability to control the cooling process is of great interest in minimizing residual stresses in the steel plates.

We mention, that our heat transfer model (\eqref{IBVP1} - \eqref{IBVP4}) is 1D, whereas the heavy plates are obviously 3D. The dimension reduction arises from the fact, that the plates are very thin (x-direction) compared to their length and height ($y$-$z$-plane). Thus, the heat transfer is mainly governed over the thickness dimension $x\in \bar{\Omega}.$ For example, at depth $x_c=L/2$ and times $t\in I$, we assume that $u(t,x_c,y_1,z_1)=u(t,x_c,y_2,z_2)$ for all tuples $(y_1,z_1)$ and $(y_2,z_2)$ within the plate. In this way we can neglect the $y$-$z$-plane and just refer to $u(t,x_c)$ for the core enthalpy of the heavy plate.

The enthalpy $u: I \times \Omega \to \mathbb{R}_+$ is defined by the temperature-to-enthalpy transformation
\begin{align}
    \label{enthalpyDef}
u(t,x):=\hat{C}(\theta(t,x)):=\int\limits_{\underline{\theta}}^{\theta(t,x)}C(\xi)\ \mathrm{d}\xi>0, 
\end{align}
where $\theta>0$ is the temperature variable, $\underline{\theta}=273.15\  K(=0^\circ \ \text{Celsius})$ a constant and $C:\mathbb{R}_+\to \mathbb{R}_+$ the temperature-dependent volumetric heat capacity, i.e. a product of the mass density and the specific heat capacity of the underlying material. Note, that we often omit the variables $(t,x)$ for a better readability. Since $\hat{C}'=C(\theta)>0$, the inverse mapping $\hat{C}^{-1}$ exists and the enthalpy-to-temperature mapping is simply given by $\theta=\hat{C}^{-1}(u)$. As already discussed in \cite{rothermel2019parameter}, the heat conduction behavior of hot-rolled heavy plates during the Accelerated Cooling (ACC) process is determined by strong temperature gradients and phase transitions in the underlying material, hence leading to the 1D heat transfer model describing the temperature distribution with respect to the thickness of the plate by 
\begin{align}
    \label{oldPde}
    C(\theta)\theta_t = (k(\theta)\theta_x)_x \ \ \ \  \text{for } (t,x) \in I\times\Omega.
\end{align}
Equation \eqref{oldPde} is often used in the modeling of dynamic heating processes, too, i.e. when adding heat to the system, e.g. in simulations of blast furnaces or in space research. The equation is governed by temperature dependent material parameters $C$ and $k$, where $k:\mathbb{R}_+\to \mathbb{R}_+$ is the so-called thermal conductivity. 

By using the transformation approach from \cite{roubicek}, when defining
\begin{align}
    \label{enthalpyDiffusion}
    \alpha(u):&= \int\limits_{\underline{\theta}}^{\hat{C}^{-1}(u)} k(\xi)\ \mathrm{d}\xi, \ \ \text{such that}\\
    \alpha'(u)&=\frac{k(\theta)}{C(\theta)} \label{alphaStrichGleichTLF},
\end{align}
the equations \eqref{IBVP1} and \eqref{oldPde} describe the same heat transfer process since $ u_t = C(\theta)\theta_t$ and $u_x = C(\theta)\theta_x$.
This way, pointwise for any $(t,x)$, we see that $\alpha'(u(t,x))$ represents the so-called thermal diffusivity $\frac{k(\theta(t,x))}{C(\theta(t,x))}$, which is a bounded function typically stated only in terms of temperature $\theta$. 

We want to point out the advantage of describing the heat transfer for both formulations. First of all, in industrial applications the end user often thinks in terms of temperature, while the enthalpy often seems intangible. For instance, the insertion of thermocouples into the heavy plates (cf. \cite{rothermel2019parameter}) typically measure temperatures. On the other hand, the enthalpy formulation fits the approach of \textit{pseudomonotone} operators in order to show that the involved PDEs are meaningful in an abstract mathematical sense. This is why we focus on the heat transfer process described in terms of enthalpy in this paper.

The motivation of considering enthalpy (or temperature) dependent heat fluxes $\beta_0,\beta_L>0$ in  \eqref{IBVP2} - \eqref{IBVP3} is to model the so-called Leidenfrost effect, cf. \cite{walker2010boiling}, \cite{bernardin1999leidenfrost}. The hot surfaces form a steam layer and avoid contact with the cooling fluid until the steam layer collapses in some critical enthalpy (or temperature) range. Because of different cooling conditions on the bottom ($x=0$, e.g. insulation due to the roller contact) and top surface ($x=L$, e.g. accumulation of backwater) it is appropriate to consider not only one, but two heat fluxes $\beta_0$ and $\beta_L$.

Assuming that $\alpha'$ and $u_0$ is known, the aim of this paper, is to solve the inverse heat convection problem (IHCP) in determining the unknown heat fluxes $\beta_0$ and $\beta_L$ from noisy internal enthalpy measurements $u^{\delta}$. Since the heat fluxes depend on the solution u of the IBVP itself, a classical formulation of the inverse problem is not possible, because the forward operator $F$ could only be posed implicitly, e.g., with an appropriate definition, by
\begin{align}
F(\beta_0(u),\beta_L(u),u)=0.
\end{align}
In a classical inverse problem, we typically regard the forward operator as a composition of two operators, i.e.
 \begin{align}
     \label{compositionForF}
     F = \mathcal{Q}\circ S,
 \end{align}
where, $S$ is the explicitly defined parameter-to-solution operator, and  $\mathcal{Q}$ is the so-called observation operator, which encodes the measuring process and makes solutions $u$ of the IBVP comparable to the measurement data. In our case, however, we are not able to decouple the parameter-to-solution relation. Again, an operator $S$ could only be formulated implicitly, cf. Section \ref{subsec:parametrizationApproach}, where we propose a convenient parametrization approach in order to avoid this problem. We achieve this by introducing a piecewise cubic interpolation method, such that we can represent positive heat fluxes from $\mathcal{C}^1([0,u_{max}])$ by a heat flux parameter $\pmb{\beta}\in B:=[0,\beta_{max}]^{2n}\subset\mathbb{R}^{2n}_+$, i.e. $\beta_0(u)=\beta_0(u,\pmb{\beta})$ and $\beta_L(u)=\beta_L(u,\pmb{\beta})$. Here, $u_{max},\beta_{max}>0$ are known finite upper bounds.

Hence, we can explicitly define the parameter-to-solution operator by
\begin{align}
    \label{parameterToSolutionMap}
    S:B\subset\mathbb{R}^{2n} & \to \mathcal{U},\\
    \pmb{\beta} & \mapsto u,
\end{align}
and obtain to the nonlinear forward operator
 \begin{align}
     \label{forwardOperatorInIntroduction}
     F:B\subset \mathbb{R}^{2n} & \to Y,\\
     \pmb{\beta} & \mapsto u_s,
 \end{align}
which maps the heat flux parameter to some observed enthalpy state $u_s:=\mathcal{Q}u\in Y$ of the solution $u$ of the IBVP, where we insert the associated heat fluxes $\beta_0(u,\pmb{\beta}),\beta_L(u,\pmb{\beta})>0$ in \eqref{IBVP2} - \eqref{IBVP3}. 
 
As one of the main contributions of the paper, before studying the IHCP, we scrutinize the parameter-to-solution operator with regard to existence and uniqueness aspects of the IBVP \eqref{IBVP1} - \eqref{IBVP4} in an appropriate space $\mathcal{U}.$

The IHCP can then be posed by the nonlinear operator equation
\begin{align}
    \label{operatorEquation}
    F(\pmb{\beta})=u^{\delta}.
\end{align}
Note, that only the noisy measurement $u^{\delta}$ of the exact enthalpy state $u_s$ is available, where we assume that the noise level $\delta>0$ fulfills
\begin{align}
    \label{noiseLevel}
    \|u^{\delta}-u_s\|_Y\leq \delta.
\end{align}
Thus, since the solution of such operator equations usually does not depend continuously on the data, a direct inversion leads to no useful result. The inverse problem is ill-posed and requires the use of some regularization technique, see e.g. \cite{louis2013inverse}, \cite{engl1996regularization}, \cite{schuster}. One way to find a stable solution is to implement an iterative method to minimize the objective functional
\begin{align}
    \label{objFunctional}
    f(\pmb{\beta}):=\frac{1}{2}\|F(\pmb{\beta})-u^{\delta}\|_Y^2.
\end{align}
To solve the inverse problem, given an initial guess $\pmb{\beta}^{(0)}$, one performs some iteration procedure
\begin{align}
    \label{iterationRule}
    \pmb{\beta}^{(k+1)}=\pmb{\beta}^{(k)}+\lambda_k \mathbf{p}_k,
\end{align}
where $\mathbf{p}_k$ is the descent direction and $\lambda_k$ the step size. As discussed, e.g. in \cite{kaltenbacher2008iterative} for the Landweber method or in \cite{wang2005convergence} for Trust Region methods, a convergence can only be expected if the iteration \eqref{iterationRule} is stopped appropriately for example with Morozov's a-posteriori discrepancy principle, see \cite{morozov1966solution}. 

In this paper, we iteratively  solve
\begin{align}
    \label{minimizationTask}
    \min\limits_{\pmb{\beta}\in \mathbb{R}^{2n}} f(\pmb{\beta}) \text{ such that } \pmb{\beta}\in B
\end{align}
together with the discrepancy principle as a stopping rule, by implementing the Projected Quasi-Newton Method (PQN, proposed by \cite{kim2010tackling}), which is a gradient-scaling method that approximates the Hessian by Broyden-Fletcher-Goldfarb-Shanno (BFGS) updates. In comparison to the slow but very stable attenuated Landweber method, the PQN method excels by its convergence speed while still being robust. 

An essential information to compute some descent direction $\mathbf{p}_k$ is the gradient of the objective functional. Assuming that $F$ is differentiable with Fréchet derivative F', the gradient is explicitly given by
\begin{align}
    \label{gradientOfObjectiveFunctional}
    \nabla f(\pmb{\beta})=F'(\pmb{\beta})^*(F(\pmb{\beta})-u^{\delta}),
\end{align}
cf. \cite{kaltenbacher2008iterative}, \cite{schuster}. Here, $F'(\pmb{\beta})^*$ denotes the adjoint operator.

We derive the procedure to compute the gradient \eqref{gradientOfObjectiveFunctional} for the underlying problem, utilize it for the PQN method and conclude the article with numerical results.

\begin{subsection}{IHCPs}

The determination of heat fluxes from internal enthalpy measurements belongs to the class of Inverse Heat Transfer Problems (IHTPs), which by itself is studied extensively in the current literature, especially in industrial applications. More specifically, these problems are classified as Inverse Heat Convection Problems, which we abbreviate by IHCPs in this paper, because the Neumann boundary conditions are convective type. We note, that this should not be confused with Inverse Heat Conduction Problems, which consist of determining the heat conduction behavior inside of the material by, for example, the identification of the material parameters $k$ and/or $C$ from additional data. We refer the interested reader to standard textbooks as \cite{alifanov} or \cite{ozisik2000inverse} regarding IHTPs in general.

Of course, there are publications on IHCPs in order to determine heat fluxes in very specific settings, involving different measurement techniques for various models in 1D, 2D or 3D, including mathematical questions concerning the conditions on the existence and uniqueness of a solution, etc. However, the heat fluxes (or variations of it) many times dependent only on the time variable $t$, or only the space variable $x$ or both, cf. \cite{su2004inverse}, \cite{huang1992inverse}, \cite{yang2011nonlinear}. Models with temperature- (or enthalpy-) dependent functions are more commonly studied in nonlinear heat \textit{conduction} problems, where e.g. the unknown material parameters such as density, specific heat capacity or thermal conductivity are temperature dependent due to phase changes in the material and have to be determined from a measured temperature state. Often the parametrization approaches have the idea to simplify the functions in a way that will only work in experimental simulations. In \cite{mierzwiczak2011determination} or \cite{cuiGaoZhang} for instance, the authors analyze the parametrized model with low order polynomial functions of temperature in order to determine the unknown coefficients. 
This way, in order to represent more complex functions one needs to increase the order of the polynomial significantly, which has its own drawbacks as numerical instabilities arise due to fact that the coefficients of the high order monomials might be very small. In \cite{rothermel2019parameter}, the authors propose a parameter estimation approach to identify continuously differentiable material parameters $k(\theta)$ and $C(\theta)$ (up to some constant) from internal temperature measurements without any a-priori information. 

The determination of enthalpy- (or temperature-) dependent heat fluxes with regard to IHCPs seems not to be studied a lot in the current literature, although the Leidenfrost phenomenon is a well-known problem and gets more attention when it comes to experimental research for various cooling specifications, cf. \cite{gottfried1966leidenfrost}, \cite{bernardin2004leidenfrost}.
\end{subsection}

\begin{subsection}{Outlook}
In Section \ref{sec:ExistenceAndUniqueness}, we discuss the existence and uniqueness of solutions $u$ of the IBVP \eqref{IBVP1} - \eqref{IBVP4} by pursuing different approaches. On the one hand, we investigate classical solutions in 1D settings by applying the theory proposed by Ladyzhenskaya et al., see \cite{ladyzenskaja1968linear}. On the other hand, we scrutinize the existence of so-called strong solutions by transforming the IBVP in an abstract Cauchy problem and utilizing the approach of pseudomonotone operators introduced by Brezis in \cite{brezis1968equations}. This leads to a solution in a weaker sense, i.e. the requirements on the involved functions in \eqref{IBVP1} - \eqref{IBVP4} are not as strict. We finish Section \ref{sec:ExistenceAndUniqueness} by specifying the appropriate solution space $\mathcal{U}$.

In Section \ref{sec:inverseHCP} we explain, how we represent $\beta_0$ and $\beta_L$ by some parameter vector $\pmb{\beta}\in \mathbb{R}^{2n}$ for $n>0$, in order to define the \textit{parameter-to-solution operator} 
\begin{align}
S:B\subset\mathbb{R}^{2n}&\to \mathcal{U},\nonumber\\
\pmb{\beta}&\mapsto u, \label{parameterToSolution}
\end{align} 
mapping the heat flux parameter to a solution of the associated IBVP \eqref{IBVP1}-\eqref{IBVP4}. We also define the so-called \textit{observation operator}, which encodes the measuring process and makes solutions comparable to the measurement data $u^{\delta}$. Furthermore, we discuss the inverse problem (IHCP) consisting of determining the heat flux parameter $\beta\in \mathbb{R}^{2n}$.

The aim of Section \ref{sec:implementationAndNumericalResults} is to implement the iterative Projected Quasi-Newton method proposed by Kim et al., see \cite{kim2010tackling}. We finish the article with numerical results.

\end{subsection}

\section{Setting the stage}\label{sec:ExistenceAndUniqueness}

\subsection{Existence and uniqueness of a classical solution}
For this subsection, let $I=(0,T)$ be the open time interval. Furthermore let $\bar{Q}:=\bar{I}\times \bar{\Omega}$ and $l\in (0,1)$. With $H^{2+l,1+l/2}(\bar{Q})$ we denote the H\"older space consisting of continuous functions $u(t,x)$ in $\bar{Q}$, such that all derivatives with respect to space (resp. time) are continuous up to order 2 (resp. 1). Thus, $u\in \mathcal{C}^{2,1}(\bar{Q})$, but additionally, these functions are also H\"older continuous in space (resp. time) with exponent $l$ (resp. $l/2$), i.e. $\exists \ C>0$ with
\begin{align}
    \label{spaceTimeHoelder}
    \max\limits_{x_1,x_2 \in \Omega} |u(t,x_1)-u(t,x_2)|&\leq C |x_1-x_2|^l, \ \ \forall t\in \bar{I}, \\
        \max\limits_{t_1,t_2 \in I} |u(t_1,x)-u(t_2,x)|&\leq C |t_1-t_2|^{l/2}, \ \ \forall x\in \bar{\Omega}.
\end{align}

In order to show the existence and uniqueness of a classical solution $u\in H^{2+l,1+l/2}(\bar{Q})\subset \mathcal{C}^{2,1}(\bar{Q})$ we modify the original result by Ladyzhenskaya et al. (\cite{ladyzenskaja1968linear}) to fit our situation in the following lemma.

\begin{lemma}\label{lemma:lady}
Given the second order initial boundary value problem 
\begin{align}
    \label{lady1}
     u_t &= a(x,t,u)u_{xx}-b(x,t,u,u_x),  &t \in I,\  x\in \Omega,\\
     \label{lady2}
    a(0,t,u)u_x &= \Phi(0,t,u), &t \in I,\  x=0,\\
    \label{lady3}
    -a(L,t,u)u_x &= \Phi(L,t,u), &t \in I,\  x=L,\\
    \label{lady4}
    u&=\Phi_0, &t=0, x\in \bar{\Omega},
\end{align}
let the following conditions hold:
\begin{align} \label{zwei37}
        a(x,t,u)&> 0  &\text{for } (t,x)\in \bar{\Omega}\times (0,T]\\
    -u b(x,t,u,0) &\leq c_1 u^2 + c_2 &\text{for }(t,x) \in \Omega \times (0,T], \ c_1, c_2\geq0\\
    -u\Phi(x,t,u)&<0 &\text{for } |u|>0,\  (t,x) \in \bar{I}\times \{0,L\}
    \end{align}
Moreover, for arbitrary $p$, $(t,x)\in \bar{Q}$ and $|u|< \infty$, let the functions $a(x,t,u)$, $b(x,t,u,p)$ and $\Phi(x,t,u)$ be continuous in their arguments (and possess all the subsequent derivatives) fulfilling
\begin{align}\label{zwei40}
    a(x,t,u) &\leq c,\\\label{zwei41}
    |a_u,a_x,a_t,a_{uu},a_{ut},a_{ux},a_{xt}|&\leq c,\\\label{zwei42}
    |\Phi,\Phi_u,\Phi_x,\Phi_t,\Phi_{uu},\Phi_{ut},\Phi_{ux}|&\leq c,\\\label{zwei43}
    |b(x,t,u,p)|&\leq c(1+p^2),\\\label{zwei44}
    |b_p|(1+|p|)+|b_u|+|b_t|&\leq c(1+p^2) &\text{ for } c>0.
\end{align}
Furthermore, for $|p|<\infty$ and $l\in(0,1)$, let $a_x(x,t,u)$ and $b(x,t,u,p)$ be H\"older continuous in $x$ with exponent $l$, and $\Phi_x(x,t,u)$ H\"older continuous in $x$ and $t$ with exponents $l$ and $l/2$, respectively. Finally, if 
\begin{align}\label{zwei45}
    \Phi(0,0,0)=\Phi(L,0,0)=0
\end{align}
holds, the initial boundary value problem \eqref{lady1} - \eqref{lady4} has a unique solution $u\in H^{2+l,1+l/2}(\bar{Q})$.
\begin{proof}
Cf. Theorem 7.4 in \cite{ladyzenskaja1968linear}.
\end{proof}
\end{lemma}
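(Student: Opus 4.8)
The plan is to read this lemma not as a new result but as the specialization, to the one--dimensional third (convective) boundary value problem, of the classical existence and uniqueness theory for quasilinear parabolic equations with nonlinear boundary conditions in \cite{ladyzenskaja1968linear}. The proof therefore has two components: verifying that hypotheses \eqref{zwei37}--\eqref{zwei45} are exactly those required by Theorem 7.4 of \cite{ladyzenskaja1968linear} once the principal part is taken to be $a(x,t,u)u_{xx}$ and the boundary operators \eqref{lady2}--\eqref{lady3} are identified as being of third type; and recalling enough of the structure of that theorem's proof to see the role played by each assumption.

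The underlying argument is the Leray--Schauder continuation method. One freezes the nonlinearities: given $v$ in a ball of a H\"older space $H^{1+l,(1+l)/2}(\bar{Q})$, one solves the \emph{linear} parabolic problem with coefficients $a(x,t,v)$, $b(x,t,v,v_x)$ and $\Phi(x,t,v)$, and linear Schauder theory --- for which the boundedness of $a$ and of the listed derivatives in \eqref{zwei40}--\eqref{zwei42}, together with the stated H\"older continuity of $a_x$ and $\Phi_x$, is what is needed --- shows that the resulting map $v\mapsto u$ is compact. A fixed point, hence a solution of \eqref{lady1}--\eqref{lady4}, follows once one has an a priori bound on $u$ in $\mathcal{C}^{2,1}(\bar{Q})$, uniform along the homotopy. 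That bound is assembled in the usual chain: an $L^\infty$ bound on $u$ from the maximum principle --- this is precisely where the boundary sign condition $-u\Phi(x,t,u)<0$ and the one--sided growth bound $-u\,b(x,t,u,0)\le c_1u^2+c_2$ enter, preventing finite--time blow-up; then a H\"older estimate for $u$; then, the technically most demanding step, a H\"older estimate for $u_x$, which in the quasilinear setting with nonlinear boundary data rests on the quadratic growth conditions \eqref{zwei43}--\eqref{zwei44} on $b$ and on boundary barriers; and finally a bootstrap via interior and boundary Schauder estimates to $u\in H^{2+l,1+l/2}(\bar{Q})$. The compatibility condition \eqref{zwei45}, $\Phi(0,0,0)=\Phi(L,0,0)=0$, is what promotes regularity in the interior to regularity up to the parabolic corners $(0,0)$ and $(L,0)$, so that $u$ genuinely belongs to $H^{2+l,1+l/2}(\bar{Q})$.

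Uniqueness is the soft part. If $u_1$ and $u_2$ are two such solutions and $w:=u_1-u_2$, then subtracting \eqref{lady1}--\eqref{lady4} for the two solutions and applying the mean value theorem shows that $w$ solves a \emph{linear} parabolic problem whose coefficients are bounded, thanks to the $\mathcal{C}^1$--type control of $a$, $b$ and $\Phi$ in the $u$-- and $p$--variables recorded in \eqref{zwei41}--\eqref{zwei44}; a standard energy estimate and Gronwall's inequality then give $w\equiv 0$.

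The main obstacle, and the reason the lemma is attributed to \cite{ladyzenskaja1968linear}, is the a priori H\"older estimate for the gradient $u_x$ up to the boundary: for quasilinear equations this is the genuinely hard point, and with the nonlinear convective conditions \eqref{lady2}--\eqref{lady3} one cannot simply invoke a gradient maximum principle but must combine the quadratic growth of $b$ with carefully chosen barriers near $x=0$ and $x=L$. On our side the remaining effort is bookkeeping: rewriting \eqref{lady1}--\eqref{lady4} in the notation of Chapter V of \cite{ladyzenskaja1968linear} and checking that \eqref{zwei37}--\eqref{zwei45} reproduce its hypotheses term by term.
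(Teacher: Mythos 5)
Your proposal is correct and follows essentially the same route as the paper, which simply invokes Theorem 7.4 of \cite{ladyzenskaja1968linear}: you identify the lemma as the specialization of that classical result to the one--dimensional problem with nonlinear flux (third--type) boundary conditions and check that hypotheses \eqref{zwei37}--\eqref{zwei45} are exactly its assumptions. Your additional sketch of the Leray--Schauder continuation argument, the chain of a priori estimates (maximum principle via the sign conditions, H\"older and gradient H\"older bounds, Schauder bootstrap, compatibility at the parabolic corners) and the linearization--plus--Gronwall uniqueness step is a faithful account of how that cited theorem is proved, but it goes beyond what the paper itself records.
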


\begin{remark}
Note, that in order to better distinguish the derivatives in the above lemma, we used the subscript $u$ to indicate the derivative with respect to $u$. However, we also use the prime notation to refer to this derivative if its clear which derivative is meant, e.g., for $\alpha(u)$ we write $\alpha'(u)$ instead of $\alpha_u(u).$
\end{remark}

\begin{theorem}\label{theoremClassicalSolution}
Let $\alpha \in \mathcal{C}^3$ with
\begin{align}
    0<c_1\leq\alpha'\leq c_2<\infty \label{zwei46}
\end{align}
and \begin{align}
    |\alpha''|\leq c_3,\ \ |\alpha'''|\leq c_4, \label{zwei47}
\end{align}
and $0<\beta_{0},\beta_{L}\in \mathcal{C}^2$ with
\begin{align}\label{zwei48}
    |\beta_0,\beta_L|\leq c_5,\ \ \ |\beta_0',\beta_L'|\leq c_6,\ \ \ |\beta_0'',\beta_L''|\leq c_7,\ \ \
\end{align}
hold for some constants $c_1,\dots,c_7>0.$
If further
\begin{align}
    \beta_0(0)=\beta_L(0)=0,\label{zwei49}
\end{align}
then the IBVP \eqref{IBVP1} - \eqref{IBVP4} has a unique solution $u \in H^{2+l,1+l/2}(\bar{Q})$ for any $l\in (0,1)$.

\begin{proof} We first reformulate \eqref{IBVP1} to
\begin{align}
    \label{reformulationIBVP1}
    u_t = \alpha'(u)u_{xx}+\alpha''(u)u_x^2.
\end{align}
By choosing 
\begin{align*}
    a(x,t,u) = \alpha'(u), \ \ \ b(x,t,u,p) = - \alpha''(u)p^2, \ \ \ \Phi_0 = u_0,  \ \ \ \Phi(0,t,u) = \beta_0(u) \text{\ \ \ and \ \ \ } \Phi(L,t,u) = \beta_L(u),
\end{align*}
the conditions \eqref{zwei37} - \eqref{zwei40} and \eqref{zwei45} follow immediately together with the requirements \eqref{zwei46} and \eqref{zwei49}. The boundedness of the derivatives \eqref{zwei41} - \eqref{zwei42} are a consequence of \eqref{zwei47} - \eqref{zwei48}. Furthermore, for $c:=2(2c_3+c_4)$ we have
\begin{align*}
    |b(x,t,u,p)|&=|-\alpha''(u)p^2|\leq c_3 p^2, \ \text{ and} \\
    |b_p|(1+|p|)+|b_u|+|b_t|&=|2\alpha''(u)p|(1+|p|)+|-\alpha'''(u)p^2|\\
    &\leq 2c_3(|p|+p^2)+c_4 p^2 \\
    &\leq (2c_3+c_4) (|p|+p^2) \\
      &\leq (2c_3+c_4) (2+p^2+p^2) \\
        &\leq c (1+p^2),
\end{align*}
which shows \eqref{zwei43} - \eqref{zwei44}. Since in our case, $a_x=\Phi_x=0$ and $b$ does not depend explicitly on $x$ or $t$, these functions are H\"older continuous with any exponent $l$ or $l/2$ for $l\in (0,1)$, respectively. Thus, the existence of a unique solution $u\in H^{2+l,1+l/2}(\bar{Q})$ follows from Lemma \ref{lemma:lady}.
\end{proof}

\end{theorem}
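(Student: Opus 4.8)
The plan is to recast the IBVP \eqref{IBVP1}--\eqref{IBVP4} into the exact form treated by Lemma \ref{lemma:lady} and then to check its hypotheses one by one. First I would expand the divergence-form operator on the right-hand side of \eqref{IBVP1} by the chain rule, writing $u_t = \alpha'(u)u_{xx} + \alpha''(u)u_x^2$, so that the natural identifications are $a(x,t,u) := \alpha'(u)$, $b(x,t,u,p) := -\alpha''(u)p^2$, $\Phi_0 := u_0$, and $\Phi(0,t,u) := \beta_0(u)$, $\Phi(L,t,u) := \beta_L(u)$. The key observation making the rest short is that with these choices neither the principal coefficient nor the boundary data depends explicitly on $x$ or $t$.

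Next I would dispatch the structural conditions \eqref{zwei37}--\eqref{zwei40} and the compatibility condition \eqref{zwei45}. Positivity $a>0$ and the upper bound $a\le c$ are exactly the two-sided bound \eqref{zwei46} on $\alpha'$; the dissipativity-type inequality for $-u\,b(x,t,u,0)$ is trivial because $b$ vanishes at $p=0$; the sign condition $-u\Phi(x,t,u)<0$ for $|u|>0$ follows from the positivity $\beta_0,\beta_L>0$; and \eqref{zwei45} is precisely \eqref{zwei49}.

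Then I would verify the boundedness and growth conditions \eqref{zwei41}--\eqref{zwei44}. Since $a=\alpha'(u)$ carries no explicit $x$- or $t$-dependence, the mixed derivatives vanish and one is left with $|a_u|=|\alpha''|\le c_3$ and $|a_{uu}|=|\alpha'''|\le c_4$ from \eqref{zwei47}; analogously the derivatives of $\Phi$ reduce to those of $\beta_0,\beta_L$ and are controlled by \eqref{zwei48}. For $b$ the quadratic bound $|b|=|\alpha''(u)|p^2\le c_3 p^2\le c(1+p^2)$ is immediate, and \eqref{zwei44} reduces to estimating $2|\alpha''(u)|\,|p|(1+|p|)+|\alpha'''(u)|p^2$, which is bounded by a fixed multiple of $1+p^2$ after using the elementary inequality $|p|\le 1+p^2$ together with the bounds on $\alpha'',\alpha'''$. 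Finally the Hölder-continuity hypotheses are vacuously true since $a_x\equiv 0$, $\Phi_x\equiv 0$, and $b$ is independent of $x$ and $t$, so Lemma \ref{lemma:lady} yields a unique $u\in H^{2+l,1+l/2}(\bar{Q})$ for every $l\in(0,1)$.

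I expect the only genuinely substantive point to be the passage to non-divergence form and the check that the nonlinearity $\alpha''(u)u_x^2$ respects the at-most-quadratic growth in $u_x$ demanded by \eqref{zwei43}--\eqref{zwei44}: here the growth rate is met exactly in the leading order, so the $\mathcal{C}^3$-regularity of $\alpha$ and the boundedness of $\alpha''',\beta_0'',\beta_L''$ enter in an essential way and cannot be relaxed within this argument. Everything else is bookkeeping that follows directly from the stated bounds \eqref{zwei46}--\eqref{zwei49}.
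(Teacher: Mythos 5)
Your proposal is correct and follows essentially the same route as the paper: the same reformulation $u_t=\alpha'(u)u_{xx}+\alpha''(u)u_x^2$, the same identifications $a=\alpha'(u)$, $b=-\alpha''(u)p^2$, $\Phi=\beta_0,\beta_L$, the same verification of \eqref{zwei37}--\eqref{zwei45} from \eqref{zwei46}--\eqref{zwei49} (including the elementary $|p|\le 1+p^2$ bound for the quadratic growth condition), and the same observation that the H\"older hypotheses are vacuous because $a$, $b$, $\Phi$ carry no explicit $(x,t)$-dependence, before invoking Lemma \ref{lemma:lady}.
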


Obviously, the requirements of Theorem \ref{theoremClassicalSolution} on the involved functions are very restrictive. Furthermore, using classical solutions the treatment of the inverse problem demand for regularization methods in H\"older spaces and hence non-reflexive Banach spaces what would be very intricate (cf. \cite{schuster}). That is why we want to scrutinize the theory of strong solutions. Such solutions exist under milder assumptions and, in our case, even in a Hilbert space setting, which simplifies the further analysis.

For this purpose, following \cite{zeidler2013nonlinear} and \cite{roubicek}, we summarize important mathematical preliminaries in the next subsection.

\subsection{Strong solutions of abstract Cauchy problems}
Let V be a separable and reflexive Banach space with norm  $\|\cdot\|_V$ and
$V^*$ the dual space with norm $\|\cdot\|_{V^*}$. The bilinear form $\left<\cdot,\cdot \right >_{V \times V^*}$ denotes the \textit{dual pairing}.
For $p\geq 1$,\  $L^p(I;V)$ denotes the Bochner space of functions on the bounded time interval $I$ with values in $V$. With the norm
\begin{align}\label{bochnernorm}
\|u\|_{L^p(I;V)}:=\left(\int_0^T \|u(t)\|_V^p \mathrm{d} t\right)^{1/p}
\end{align}
$L^p(I;V)$ turns into a Banach space itself. Furthermore, by the conjugate exponent $p'=p/(p-1)$ the associated dual space is given by $L^{p'}(I;V^*)$. The dual pairing is defined as
\begin{align}
    \label{dualPairingBochner}
    \left<u,v \right >_{L^p(I;V) \times L^{p'}(I;V^*)}:=\int_0^T \left<u(t),v(t) \right >_{V \times V^*} \mathrm{d}t.
\end{align}

Occasionally, if it is evident which norm or dual pairing is concerned we drop the subscripts and simply write
$\|\cdot\|$ or $\left<\cdot,\cdot \right >$. 
As usual, $u_k\to u$ or $u_k\rightharpoonup u$ denotes the \textit{strong} or \textit{weak convergence}.

The theory of so-called \emph{strong solutions} is closely linked to abstract operator equations with pseudomonotone operators. 
These were originally introduced by Brezis in \cite{brezis1968equations} as an extension to the \textit{monotonicity} theory of Minty-Browder (\cite{minty1963monotonicity}), which guarantees the existence of solutions $u\in X$ for abstract operator equations
\begin{align}
    \label{operatorEq}
    A(u)=f \ \ \ \ \text{in } X^*
\end{align}
for monotone operators $A:X\to X^*$, where $X$ is a reflexive Banach space and $X^*$ its dual. Following \cite{zeidler2013nonlinear} and \cite{roubicek}, the results of Brezis can also be extended to evolutionary problems of the form
\begin{align}
\label{cauchyInVStern}
    u_t+A(u(t))&=f(t) \ \ \ \text{ for a.a. } t\in I,\\
    u(0)&=u_0.\label{cauchyInH}
\end{align}

Considering the \textit{abstract Cauchy problem} \eqref{cauchyInVStern} - \eqref{cauchyInH}, we emphasize that for an operator $A:V\to V^*$ the first equation is to be understood as an equation in $V^*$, while the second equation holds in some separable Hilbert space H, where $V\subset H \subset V^*$ forms a \textit{Gelfand triple}, i.e. both the embeddings $V\hookrightarrow  H$ and $H\subset V^*$ are continuous and dense, see \cite{wlokapartial}. This way, the dual pairing acting on $V^*\times V$ is a continuous extension of the inner product $(\cdot,\cdot)$ on the Hilbert space H, i.e. we have 
\begin{align} \label{innerproductExtension}
    (u,v)=\langle u, v\rangle \ \ \ \ \ \text{ for } u \in H \text { and } v \in V. 
\end{align}

Inspecting equation \eqref{cauchyInVStern} further, we realize that, if $u\in L^{p}(I;V)$, it makes sense to assume that $f$ and $u_t\in L^{p'}(I;V^*)$. For this reason, it is convenient to define the so-called Sobolev-Bochner space
\begin{align}
    \label{sobolevBochnerSpace}
    W^{1,p,p'}(I;V,V^*):=\left\{u\in L^p(I;V);\  u_t \in L^{p'}(I;V^*)\right\},
\end{align}
identifying $u_t$ as the distributional derivative of $u$, i.e.
\begin{align}
    \label{distributionalDerivative}
    u_t(\phi)=-\int_0^T u(t)\phi'(t) \mathrm{d}t,\ \ \  \forall \phi \in \mathcal{C}_0^{\infty}([0,T]).
\end{align}

It is worth mentioning, that this way the embedding $W^{1,p,p'}(I;V,V^*)\hookrightarrow  \mathcal{C}(I;H)$ is continuous.\footnote{The case $p=2=p'$ represents the embedding result of the famous Lions-Magenes lemma, cf. \cite{lions2012non}. A proof for $p\neq 2$ can be found in \cite{roubicek}.} Hence, the evaluation at time $t=0$ makes sense, i.e. \eqref{cauchyInH} is justified.
\begin{dfn}
We call $u \in W^{1,p,p'}(I;V,V^*)$ a \textit{strong solution} to the abstract Cauchy problem \eqref{cauchyInVStern} - \eqref{cauchyInH}, if the first equation holds in $V^*$ and the second equation in $H$.
\end{dfn}

\begin{remark}
Note, that in this paper we sometimes refer to $u \in V$ or $u\in L^p(I;V)$ or $u \in W^{1,p,p'}(I;V,V^*)$ or even $u \in \mathbb{R}_+$. This is however not unusual, mentioning a quote from \cite{zeidler2013nonlinear}:
'\textit{Use several function spaces for the same problem. The modern strategy for nonlinear pde's.}'
For example, depending on the situation, the enthalpy variable $u$ might represent $u(t):=u(t,\cdot) \in V$, $u \in L^p(I;V)$ or simply $u(t,x)\in \mathbb{R}_+$. For brevity, we often omit the variables $t$ and $(t,x)$ and leave it to the reader to put the equations in the right context. 
\end{remark}

For the remainder of this subsection, we aim to facilitate an existence and uniqueness result from \cite{roubicek} concerning strong solutions of abstract Cauchy problems of the form \eqref{cauchyInVStern}-\eqref{cauchyInH}.

\begin{dfn} An operator $A:V\to V^*$ is called
\begin{itemize}
\item[(i)] \textit{strongly continuous}, iff
\begin{align}\label{stronglyCont}
u_k \rightharpoonup u \text{ in } V \quad \Rightarrow \quad A(u_k) \to A(u) \text{ in } V^*, \text{ and}
\end{align}
    \item[(ii)] \textit{pseudomonotone}, iff $A$ is bounded and if the condition
\begin{align*}
\left\{
\begin{array}{r r}
u_k &\rightharpoonup u\\
 \limsup\limits_{k\rightarrow \infty} \left<A(u_k),u_k-u\right> &\leq 0 
\end{array}
\right\}
\end{align*} 
implies that
\begin{align}\label{pseudoA}
\left<A(u),u-v\right> \leq \liminf\limits_{k\rightarrow \infty} \left<A(u_k),u_k-v\right>, \ \ \ \forall v \in V. 
\end{align}
\end{itemize}
\end{dfn}

\begin{proposition}\label{proposition:strongAndPseudoImplications}
For $A:V\to V^*$ and $B:V\to V^*$ the following implications hold:
\begin{itemize}
\item[(i)] $A$ is strongly continuous $\Rightarrow$  $A$ is pseudomonotone.
\item[(ii)] $A$, $B$ are pseudomonotone $\Rightarrow$  $A+B$ is pseudomonotone.
\end{itemize}
\end{proposition}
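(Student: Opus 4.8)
\emph{Proof plan.} The plan is to treat the two implications separately, in each case first verifying the boundedness requirement that is built into the definition of a pseudomonotone operator, and then checking the limit inequality \eqref{pseudoA}.

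For (i), I would begin by observing that a strongly continuous operator is automatically bounded: if it were not, there would be a bounded sequence $(u_k)$ with $\|A(u_k)\|_{V^*}\to\infty$; by reflexivity of $V$ one can extract a subsequence $u_{k_j}\rightharpoonup u$, and strong continuity forces $A(u_{k_j})\to A(u)$ in $V^*$, so $\|A(u_{k_j})\|_{V^*}$ is bounded, a contradiction. Given then $u_k\rightharpoonup u$ (the $\limsup$ hypothesis in the definition is not even needed in this case), I would split, for arbitrary $v\in V$, the quantity $\langle A(u_k),u_k-v\rangle=\langle A(u_k)-A(u),u_k-v\rangle+\langle A(u),u_k-v\rangle$. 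The first term tends to $0$ because $\|A(u_k)-A(u)\|_{V^*}\to 0$ while $\|u_k-v\|_V$ stays bounded, and the second term converges to $\langle A(u),u-v\rangle$ by weak convergence. Hence $\langle A(u_k),u_k-v\rangle\to\langle A(u),u-v\rangle$, which yields \eqref{pseudoA} (with equality in the limit).

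For (ii), boundedness of $A+B$ is immediate. The crux is to show that the hypothesis $\limsup_{k\to\infty}\langle(A+B)(u_k),u_k-u\rangle\le 0$ forces both $\limsup_{k\to\infty}\langle A(u_k),u_k-u\rangle\le 0$ and $\limsup_{k\to\infty}\langle B(u_k),u_k-u\rangle\le 0$. I would argue by contradiction: if, say, $a:=\limsup_{k\to\infty}\langle A(u_k),u_k-u\rangle>0$ (note $a$ is finite since $A$ is bounded and $(u_k)$ is bounded), I pick a subsequence along which $\langle A(u_k),u_k-u\rangle\to a$; subtracting, the $\limsup$ of $\langle B(u_k),u_k-u\rangle$ along that subsequence is $\le -a<0$, so pseudomonotonicity of $B$ applies along the subsequence (any subsequence of $(u_k)$ still converges weakly to $u$), and testing \eqref{pseudoA} with $v=u$ gives $0\le\liminf\langle B(u_k),u_k-u\rangle\le -a<0$, a contradiction; by symmetry the same holds for $B$. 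Once both $\limsup$'s are $\le 0$, I apply pseudomonotonicity of $A$ and of $B$ separately to obtain, for every $v\in V$, both $\langle A(u),u-v\rangle\le\liminf_{k\to\infty}\langle A(u_k),u_k-v\rangle$ and $\langle B(u),u-v\rangle\le\liminf_{k\to\infty}\langle B(u_k),u_k-v\rangle$; adding these and using $\liminf x_k+\liminf y_k\le\liminf(x_k+y_k)$ gives \eqref{pseudoA} for $A+B$.

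The main obstacle is the decoupling step in (ii): recognizing that one must pass to a subsequence realizing the relevant $\limsup$, that pseudomonotonicity is inherited along such subsequences, and then keeping the $\limsup$/$\liminf$ bookkeeping consistent. The remaining ingredients — boundedness from strong continuity, and the termwise limit computation in (i) — are routine.
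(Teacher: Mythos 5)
Your proof is correct, and it is essentially the argument the paper relies on: the paper proves Proposition~\ref{proposition:strongAndPseudoImplications} only by citing Proposition~27.6 in \cite{zeidler2013nonlinear}, and your two steps (boundedness plus termwise limit for (i); the subsequence/contradiction decoupling of the two $\limsup$'s, then superadditivity of $\liminf$, for (ii)) reproduce that standard proof. No gaps to report.
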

\begin{proof}
Cf. Proposition 27.6 in \cite{zeidler2013nonlinear}.
\end{proof}

\begin{dfn}\label{weaklyCoercive}
An operator $A:V\to V^*$ is called \textit{weakly coercive}, if
$$\langle A(u),u\rangle \geq c_1\|u\|_V^p-c_2 \|u\|_H^2$$
holds for $c_1,c_2>0$.
\end{dfn}

 \begin{theorem}[Existence and uniqueness of a strong solution]\label{existence_roubicek} \ \\
 Let $A:V\to V^*$ be a pseudomonotone and weakly coercive operator, that satisfies the growth condition
 \begin{align}\label{growthCondition}
 \|A(u)\|_{V^*}\leq \varphi(\|u\|_H)\left(1+\|u\|_V^{p-1}\right)
 \end{align}
 for all $u \in V$ and some increasing function $\varphi:\mathbb{R} \to \mathbb{R}$.
 Then, if $f \in L^{p'}(I;V^*)$ and $u_0 \in H$, the abstract Cauchy problem \eqref{cauchyInVStern} - \eqref{cauchyInH} has a strong solution $u\in W^{1,p,p'}(I;V,V^*)\subset \mathcal{C}(I;H).$
 
 If additionally,
\begin{align} \label{uniquenessRequirement}
 \exists c > 0\  \forall u,v \in V \text{, for a.a. } t \in I: \langle A(u)-A(v),u-v \rangle \geq -c\|u-v\|^2_H,
 \end{align}
 the strong solution $u$ is unique.
 \end{theorem}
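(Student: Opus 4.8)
The plan is to prove existence by the classical Galerkin method and to obtain uniqueness by a straightforward energy estimate, following Roubíček \cite{roubicek} and Zeidler \cite{zeidler2013nonlinear}; crucially, no compact embedding $V\hookrightarrow\hookrightarrow H$ is needed, since pseudomonotonicity of $A$ is exactly what replaces compactness. First I would fix a Galerkin basis: as $V$ is separable, choose linearly independent $v_1,v_2,\dots$ whose span is dense in $V$, set $V_k:=\mathrm{span}\{v_1,\dots,v_k\}$, and seek $u_k(t)=\sum_{j=1}^k c_j^k(t)v_j$ solving $\langle u_k'(t)+A(u_k(t))-f(t),v_j\rangle=0$ for $j=1,\dots,k$, with $u_k(0)$ the $H$-orthogonal projection of $u_0$ onto $V_k$ (so $u_k(0)\to u_0$ in $H$). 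Since $A$ is bounded and, being pseudomonotone, demicontinuous, the resulting ODE system $\dot{\mathbf c}=\mathbf g(t,\mathbf c)$ has a Carathéodory right-hand side, so Peano/Carathéodory yields a local-in-time solution.

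For the a priori estimates I would test the Galerkin equation with $u_k(t)$ itself. Using $\langle u_k',u_k\rangle=\tfrac12\tfrac{d}{dt}\|u_k\|_H^2$, the weak coercivity of Definition \ref{weaklyCoercive}, and Young's inequality on $\langle f,u_k\rangle\le\|f\|_{V^*}\|u_k\|_V\le\tfrac{c_1}{2}\|u_k\|_V^p+C\|f\|_{V^*}^{p'}$, I obtain a differential inequality $\tfrac{d}{dt}\|u_k\|_H^2+c_1\|u_k\|_V^p\le 2c_2\|u_k\|_H^2+C\|f\|_{V^*}^{p'}$. Gronwall's lemma then bounds $\|u_k\|_{L^\infty(I;H)}$ and, after integration, $\|u_k\|_{L^p(I;V)}$ uniformly in $k$ (in particular the Galerkin solution does not blow up and so is global on $I$). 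The growth condition \eqref{growthCondition}, together with $\varphi$ increasing and the $L^\infty(I;H)$-bound, gives a uniform bound on $\|A(u_k)\|_{L^{p'}(I;V^*)}$; testing against arbitrary $v\in V$ after projecting (arranging the basis so that the $H$-projections are suitably bounded, or via the abstract argument in \cite{roubicek}) yields a uniform bound on $\|u_k'\|_{L^{p'}(I;V^*)}$, hence $\{u_k\}$ is bounded in $W^{1,p,p'}(I;V,V^*)$.

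Next I would pass to the limit. By reflexivity, along a subsequence $u_k\rightharpoonup u$ in $L^p(I;V)$, $u_k'\rightharpoonup u'$ in $L^{p'}(I;V^*)$ (so $u\in W^{1,p,p'}(I;V,V^*)\hookrightarrow\mathcal{C}(I;H)$), and $A(u_k)\rightharpoonup\chi$ in $L^{p'}(I;V^*)$; passing to the limit in the Galerkin identities (first against a fixed $v_j$, then by density) gives $u'+\chi=f$ in $L^{p'}(I;V^*)$, and comparison with an integration by parts against test functions vanishing at $t=T$, together with $u_k(0)\to u_0$, gives $u(0)=u_0$. \textbf{The main obstacle is the identification $\chi=A(u)$}. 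This is where pseudomonotonicity enters, via the Lions--Brezis trick: from the Galerkin energy identity $\int_0^T\langle A(u_k),u_k\rangle\,\mathrm dt=\int_0^T\langle f,u_k\rangle\,\mathrm dt-\tfrac12\|u_k(T)\|_H^2+\tfrac12\|u_k(0)\|_H^2$, and since $u_k(T)\rightharpoonup u(T)$ in $H$, the right-hand side has $\limsup$ at most $\int_0^T\langle f,u\rangle\,\mathrm dt-\tfrac12\|u(T)\|_H^2+\tfrac12\|u_0\|_H^2$, which by the integration-by-parts formula applied to the limit equation equals $\int_0^T\langle\chi,u\rangle\,\mathrm dt$; hence $\limsup_k\int_0^T\langle A(u_k),u_k-u\rangle\,\mathrm dt\le 0$. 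One then invokes the lemma that the Nemytskii operator $\mathcal{A}$, $(\mathcal{A}u)(t):=A(u(t))$, induced by a bounded pseudomonotone $A$ is itself pseudomonotone with respect to the time derivative on $W^{1,p,p'}(I;V,V^*)$ (Roubíček \cite{roubicek}, building on Brezis \cite{brezis1968equations}); this forces $\chi=\mathcal{A}u$ and completes the existence proof. I expect the careful justification of this lemma --- a measurable-selection/subsequence argument reducing the time-integrated inequality to the pointwise pseudomonotonicity \eqref{pseudoA} of $A$ --- to be the technically delicate part.

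Finally, for uniqueness assume $u_1,u_2$ are two strong solutions and set $w:=u_1-u_2$, so $w'+A(u_1)-A(u_2)=0$ and $w(0)=0$. Testing with $w(t)$ and integrating over $(0,t)$, using $\int_0^t\langle w',w\rangle\,\mathrm ds=\tfrac12\|w(t)\|_H^2$ (the integration-by-parts formula valid on $W^{1,p,p'}(I;V,V^*)$), gives $\tfrac12\|w(t)\|_H^2+\int_0^t\langle A(u_1)-A(u_2),u_1-u_2\rangle\,\mathrm ds=0$; hypothesis \eqref{uniquenessRequirement} then yields $\tfrac12\|w(t)\|_H^2\le c\int_0^t\|w(s)\|_H^2\,\mathrm ds$, and Gronwall's lemma forces $w\equiv 0$, i.e. $u_1=u_2$.
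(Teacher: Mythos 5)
Your proposal is correct in substance, but it takes a different route from the paper only in the sense that the paper does not prove this theorem at all: its ``proof'' is the two-line citation to Theorems 8.9 and 8.34 in \cite{roubicek}. What you have written is essentially a reconstruction of the argument behind those cited results, i.e.\ the Faedo--Galerkin/Lions--Brezis scheme for existence and the Gronwall energy estimate for uniqueness, and the places where you expect technical delicacy are exactly the right ones. Three remarks on where your sketch leans on nontrivial lemmas: (i) the uniform bound on $u_k'$ in $L^{p'}(I;V^*)$ is not automatic for an arbitrary Galerkin basis in a general separable reflexive Banach space $V$, since $u_k'$ is only controlled against $V_k$; one either chooses a basis whose $H$-orthogonal projections are uniformly $V$-bounded (unproblematic in the paper's concrete Hilbert setting $V=W^{1,2}(\Omega)$, delicate in general), or, as in the standard Lions presentation, one does not bound $u_k'$ at all and instead reads off $u'=f-\chi\in L^{p'}(I;V^*)$ from the limit equation tested against $\phi(t)v_j$ --- you gesture at both options, which is acceptable for a sketch; (ii) the identification $\chi=A(u)$ indeed rests on the pseudomonotonicity, in the sense of \eqref{pseudoA}, being inherited by the induced superposition operator relative to the time derivative (this is the key lemma in \cite{roubicek}, going back to \cite{brezis1968equations}), and you correctly flag it as the hard technical core rather than pretending it is routine; (iii) the step $u_k(T)\rightharpoonup u(T)$ in $H$, used in the $\limsup$ energy argument, deserves a line of justification (boundedness of $u_k(T)$ in $H$ plus testing with functions not vanishing at $t=T$). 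Your uniqueness argument via $\int_0^t\langle w',w\rangle\,\mathrm{d}s=\tfrac12\|w(t)\|_H^2$, the hypothesis \eqref{uniquenessRequirement} and Gronwall is exactly the content of the cited Theorem 8.34 and is complete as stated. In short: the paper buys brevity by delegating to \cite{roubicek}, while your version makes explicit where Definition \ref{weaklyCoercive}, the growth bound \eqref{growthCondition} and pseudomonotonicity each enter, at the cost of having to invoke (or prove) the pseudomonotonicity-of-the-evolution-operator lemma and to be careful about the Galerkin bound on $u_k'$.
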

 \begin{proof}
Cf. theorems 8.9 and 8.34 in \cite{roubicek}.
 \end{proof}

\subsection{Existence and uniqueness of a strong solution}

In this subsection we investigate strong solutions with regard to the IBVP \eqref{IBVP1} - \eqref{IBVP4}.

For this purpose, we specify the Gelfand triple, i.e. the spaces
\begin{align}
    \label{VundH}
    V:=W^{1,2}(\Omega), \ \ \ \ H:=L^2(\Omega),
\end{align}
where the Sobolev space $W^{1,2}(\Omega)$ is even a reflexive Hilbert space. For $u\in V$ it follows from $\|u\|_V^2=\|u\|_H^2+\|u_x\|_H^2$ that $u \in H$ and $u_x \in H$. Note also, that for $dim(\Omega)=1$ the compact embedding 
\begin{align}
    \label{sobolevEmbedding}
    V \subset \subset \mathcal{C}(\bar{\Omega})
\end{align}
holds true, cf. e.g. \cite{brezis2010functional}, \cite{wlokapartial}.

We perform a \textit{weak formulation} with respect to the space variable $x\in \Omega$. Multiplying \eqref{IBVP1} by $v\in V$, integrating over $\Omega$ and using integration by parts lead to
\begin{align}
\int_\Omega u_t(t,x)v(x)\ \mathrm{d} x-\int_{\partial \Omega}\left(\alpha'(u(t,x))u_x(t,x)\right)v(x)\ \mathrm{d} S \nonumber \\ +\int_\Omega \left(\alpha'(u(t,x))u_x(t,x)\right)v_x(x)\ \mathrm{d} x=0, \label{greenFirst}
\end{align}
where for example the first integral can also be expressed as $\langle u_t,v\rangle_{V^*\times V}$, cf. \eqref{innerproductExtension}. With $\partial \Omega$ we denote the boundary on $\Omega$, which in our 1D case is just the two points $x=0$ and $x=L.$ Since \eqref{sobolevEmbedding} holds, these point evaluations are well-defined. Inserting the boundary conditions \eqref{IBVP2}-\eqref{IBVP3}, we are able to formulate an abstract Cauchy problem
\begin{align}
    \label{insertBoundaryConditionsCauchyProblem1}
    u_t + A(u(t)) &= 0 \ \ \ \ \text{ in } V^*,\\
    \label{insertBoundaryConditionsCauchyProblem2}
    u(0) &= u_0 \ \ \text{ in } H,
\end{align}
for $A:V\to V^*$ defined by
\begin{align}
    \label{operatorA}
    \langle A(u(t)),v\rangle_{V^*\times V}:= \int_\Omega \left(\alpha'(u(t,x))u_x(t,x)\right)v_x(x)\ \mathrm{d} x+\beta_L(u(t,L))v(L)+\beta_0(u(t,0))v(0),
\end{align}
where the initial condition \eqref{IBVP4} holds in the Hilbert space setting, i.e. $u_0 \in H$.

Since enthalpy is positive and we are extracting heat on the boundaries (in the underlying cooling process setting) we presume that  $u(t,x)\in [0,u_{max}]$ for all  $(t,x)\in I\times \Omega$, where $u_{max}\geq \esssup\limits_{x \in \bar{\Omega}}u_0(x)$. We furthermore assume the following function qualifications:
\begin{align}
\label{betaDifferenzierbar}
    0<\beta_0, \beta_L,\alpha \in \mathcal{C}^1([0,u_{max}]),\\ \label{beta_maxDefinition}
    \max \left\{\max\limits_{u}\beta_0(u),\max\limits_{u}\beta_L(u)\right\}=:\beta_{max}<\infty,\\
    \label{betaStrich_maxDefinition}
    \max \left\{\max\limits_{u}\beta'_0(u),\max\limits_{u}\beta'_L(u)\right\}=:\beta'_{max}<\infty,\\     \label{alphaStrichBeschraenkt}  
    0<c_{min}:=\min\limits_{u} \alpha'(u) \leq \alpha'(u) \leq c_{max}:=\max\limits_{u} \alpha'(u)<\infty.
\end{align}

Note, that because of \eqref{alphaStrichGleichTLF} and since the thermal diffusivity of materials is positive and naturally bounded, the requirement \eqref{alphaStrichBeschraenkt} is physically justified. This way, together with $u_x,v_x\in H$,  the integral in \eqref{operatorA} exists and the operator $A$ is well-defined. We also emphasize that $\beta_{max}>0$ is the a-priori known upper bound that we use later for the box-constraint $\pmb{\beta} \in B:=[0,\beta_{max}]^{2n}$.

We proceed by examining the existence and uniqueness of a strong solution to \eqref{insertBoundaryConditionsCauchyProblem1} - \eqref{insertBoundaryConditionsCauchyProblem2} utilizing Theorem \ref{existence_roubicek}. We first show the operator defined by \eqref{operatorA} is pseudomonotone:
By splitting $A:V\to V^*$ into two operators defined by
\begin{align}
\label{A1Definition}
\left<A_1(u),v\right>_{V^*\times V}:=\int_\Omega \alpha'(u)u_x v_x\  \mathrm{d}x \ \ \text{ and}
\end{align}
\begin{align}
    \label{A2Definition}
    \left<A_2(u),v\right>_{V^*\times V}:=\beta_L(u(\cdot,L))v(L)+\beta_0(u(\cdot,0))v(0)
\end{align}
such that $A=A_1+A_2$, it suffices to show the pseudomonotonicity of $A_1$ and $A_2$, cf. Proposition \ref{proposition:strongAndPseudoImplications}, (ii). Since for $u_k\rightharpoonup u$ we have $\beta_0(u_k)\to \beta_0(u)$ and $\beta_L(u_k)\to \beta_L(u)$, it follows that $\langle A_2(u_k),v\rangle \to \langle A_2(u),v \rangle$. Thus, the operator $A_2$ is strongly continuous and consequently pseudomonotone, cf. \eqref{stronglyCont} and Prop. \ref{proposition:strongAndPseudoImplications}, (i). To show \eqref{pseudoA} for $A_1$ we need the following lemma, which is inspired by Lemma 2.32 in \cite{roubicek}.

\begin{lemma} \label{lemmaForB}
Let $B:V\times V \to V^*$ be given by
\begin{align} 
\langle B(w,u),v\rangle:=\int_\Omega \alpha'(w)u_x v_x \ \mathrm{d}x,
\end{align}
then the following holds:
\begin{itemize}
\item[(i)] $B(u,u)=A_1(u)$ in $V^*$  $\forall u\in V$.
\item[(ii)] For fixed $w\in V$ the operator $B(w,\cdot):V\to V^*$ is monotone, i.e.
$$\langle B(w,u)-B(w,v),u-v\rangle \geq 0 \ \ \ \forall u,v \in V.$$
 
\end{itemize}
Furthermore, for $u_k \rightharpoonup u \in V$ it holds
\begin{itemize} 
\item[(iii)]  $\lim\limits_{k\to \infty} \langle B(u_k,v),w \rangle =\langle B(u,v),w \rangle$,\   $\forall v,w\in V$.
\item[(iv)] $\lim\limits_{k\to \infty} \langle B(u_k,v),u_k-u \rangle =0$,\   $\forall v\in V$.
\end{itemize}
\begin{proof}
The identity in (i) is obvious and monotonicity in (ii) follows from
\begin{align*}
\langle B(w,u)-B(w,v),u-v\rangle =& \int_\Omega \alpha'(w)(u-v)_x(u-v)_x\  \mathrm{d} x\\ \overset{\eqref{alphaStrichBeschraenkt}}{\geq} & c_{min} \left\|(u-v)_x\right\|_H^2 \geq 0.
\end{align*}
For the remaining proof we refer to Lemma 2.32 in \cite{roubicek}.
\end{proof}
\end{lemma}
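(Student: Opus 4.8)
The plan is to verify the four assertions of Lemma \ref{lemmaForB} in order, treating (i) and (ii) as essentially immediate (the author already does this in the excerpt) and concentrating the real work on (iii) and (iv), which rely on the compact embedding \eqref{sobolevEmbedding}. For (i), one just substitutes $w=u$ into the definition of $B$ and reads off $A_1(u)$ from \eqref{A1Definition}. For (ii), fixing $w\in V$ and expanding $\langle B(w,u)-B(w,v),u-v\rangle=\int_\Omega \alpha'(w)\,(u-v)_x^2\,\mathrm{d}x$, the lower bound $c_{min}\|(u-v)_x\|_H^2\ge 0$ from \eqref{alphaStrichBeschraenkt} gives monotonicity; this is exactly the computation displayed in the statement's proof stub.

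\medskip

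For (iii), the key observation is that $u_k\rightharpoonup u$ in $V=W^{1,2}(\Omega)$ together with the compact embedding $V\subset\subset \mathcal{C}(\bar\Omega)$ from \eqref{sobolevEmbedding} yields $u_k\to u$ strongly in $\mathcal{C}(\bar\Omega)$, hence uniformly on $\bar\Omega$. Since $\alpha'$ is continuous on the bounded interval $[0,u_{max}]$ (assumption \eqref{betaDifferenzierbar}/\eqref{alphaStrichBeschraenkt}), it is uniformly continuous there, so $\alpha'(u_k)\to\alpha'(u)$ uniformly on $\bar\Omega$; in particular $\alpha'(u_k)\to\alpha'(u)$ in $L^\infty(\Omega)$, and by \eqref{alphaStrichBeschraenkt} the family $\{\alpha'(u_k)\}$ is uniformly bounded by $c_{max}$. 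Then for fixed $v,w\in V$ we have $v_x,w_x\in H=L^2(\Omega)$, and
\begin{align*}
\left|\langle B(u_k,v),w\rangle-\langle B(u,v),w\rangle\right|
&=\left|\int_\Omega \bigl(\alpha'(u_k)-\alpha'(u)\bigr)v_x w_x\,\mathrm{d}x\right|\\
&\le \|\alpha'(u_k)-\alpha'(u)\|_{L^\infty(\Omega)}\,\|v_x\|_H\,\|w_x\|_H\longrightarrow 0,
\end{align*}
which is (iii). For (iv), one takes $w=u_k-u$ in (essentially) the same estimate but now must account for the fact that $w$ varies with $k$; one writes $\langle B(u_k,v),u_k-u\rangle=\int_\Omega\alpha'(u_k)v_x\,(u_k-u)_x\,\mathrm{d}x$ and splits it as $\int_\Omega\alpha'(u)v_x(u_k-u)_x\,\mathrm{d}x+\int_\Omega(\alpha'(u_k)-\alpha'(u))v_x(u_k-u)_x\,\mathrm{d}x$. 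The first term tends to $0$ because $\alpha'(u)v_x\in H$ is a fixed element while $(u_k-u)_x\rightharpoonup 0$ weakly in $H$ (which follows from $u_k\rightharpoonup u$ in $V$); the second term is bounded by $\|\alpha'(u_k)-\alpha'(u)\|_{L^\infty}\,\|v_x\|_H\,\|(u_k-u)_x\|_H$, where the first factor tends to $0$ by the uniform-continuity argument above and the last factor $\|(u_k-u)_x\|_H$ stays bounded (weakly convergent sequences in the Hilbert space $H$ are bounded, or equivalently $\|u_k\|_V$ is bounded). Both pieces vanish, giving (iv).

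\medskip

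The main obstacle is the handling of (iv): unlike (iii), the test function $u_k-u$ on the right slot of $B$ depends on $k$, so a crude $L^\infty$--$L^2$--$L^2$ bound alone does not close the argument — one genuinely needs the splitting into a ``frozen-coefficient'' term, killed by weak convergence $(u_k-u)_x\rightharpoonup 0$ in $H$, plus a ``coefficient-error'' term, killed by the strong (uniform) convergence $\alpha'(u_k)\to\alpha'(u)$ combined with boundedness of $\|(u_k-u)_x\|_H$. The conceptual engine behind both (iii) and (iv) is the compact Sobolev embedding $V\subset\subset\mathcal{C}(\bar\Omega)$ available in one space dimension, which upgrades weak convergence in $V$ to uniform convergence of the arguments and hence convergence of the nonlinear coefficients $\alpha'(u_k)$. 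I would remark at the end that, just as in Lemma 2.32 of \cite{roubicek}, properties (i)--(iv) are exactly what is needed to verify the pseudomonotonicity inequality \eqref{pseudoA} for $A_1=B(\cdot,\cdot)$ on the diagonal, which is the use to which this lemma will be put.
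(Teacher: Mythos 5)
Your proposal is correct, and for parts (i) and (ii) it coincides with the paper's own computation. The difference lies in (iii) and (iv): the paper does not prove these at all but simply refers to Lemma 2.32 in \cite{roubicek}, whereas you supply a self-contained argument that specializes the idea to the 1D situation — you use the compact embedding $V\subset\subset\mathcal{C}(\bar\Omega)$ from \eqref{sobolevEmbedding} to upgrade $u_k\rightharpoonup u$ in $V$ to uniform convergence, deduce $\alpha'(u_k)\to\alpha'(u)$ in $L^\infty(\Omega)$ by uniform continuity, and then in (iv) perform the splitting into a frozen-coefficient term (killed by $(u_k-u)_x\rightharpoonup 0$ in $H$ against the fixed function $\alpha'(u)v_x\in H$) plus a coefficient-error term (killed by the $L^\infty$ convergence together with boundedness of $\|(u_k-u)_x\|_H$). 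That splitting is exactly the step a crude Hölder bound cannot deliver, and your identification of it as the crux of (iv) is accurate; this is essentially the mechanism behind Roubíček's general lemma, but your version is more elementary and makes the statement self-contained, at the price of relying on the one-dimensional embedding into $\mathcal{C}(\bar\Omega)$ (the cited lemma covers general Carathéodory coefficients and $p$-growth settings where one argues via Nemytskii mappings instead). One small point to be aware of: you invoke continuity of $\alpha'$ on $[0,u_{max}]$, while arbitrary weak limits $u_k,u\in V$ need not take values in that interval; strictly one should extend $\alpha'$ continuously and boundedly beyond $[0,u_{max}]$ (or note that for large $k$ the values of $u_k$ lie in a compact neighborhood of the range of $u$). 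This is the same implicit convention the paper itself uses, e.g.\ when asserting $\beta_0(u_k)\to\beta_0(u)$ for the operator $A_2$, so it is not a genuine gap.
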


\begin{lemma}\label{A1isPseudomonotone}
The operator $A_1:V \to V^*$ defined by \eqref{A1Definition} is pseudomonotone.
\begin{proof}
For $u\in V$ the operator is bounded, since 
\begin{align}
    \|A_1(u)\|_{V^*}&=\sup\limits_{v \in V,\ \|v\|\leq 1} \left|\left<A_1(u),v\right>\right|\nonumber\\& \overset{\eqref{alphaStrichBeschraenkt}}{\leq} \sup\limits_{v \in V,\ \|v\|\leq 1} \left(c_{max} \left\|u_x\right\|_H \left\|v_x\right\|_H\right)\nonumber\\
&\leq \sup\limits_{v \in V,\ \|v\|\leq 1} \left(c_{max} \left\|u\right\|_V \left\|v\right\|_V\right)\nonumber\\
&\leq c_{max}\|u\|_V. \label{alreadyShownForA1}
\end{align}
Now we show \eqref{pseudoA} for $A_1$ by assuming that \begin{align}
\left\{
\begin{array}{r r}
u_k &\rightharpoonup u\\
 \limsup\limits_{k\rightarrow \infty} \left<A_1(u_k),u_k-u\right> &\leq 0 
\end{array}
\right\}. \label{pseudoRequirement}
\end{align}
To be thorough, we execute the proof similar as in the abovementioned reference. We first define
\begin{align}
\label{uepsilon}
u_\varepsilon:=(1-\varepsilon)u+\varepsilon v \ \ \ \in V
\end{align}
for $\varepsilon \in (0,1)$. Using the monotonicity of the operator B for $w=u_k$, $u=u_k$ and $v=u_{\varepsilon}$, we obtain
\begin{align} \label{4zweiundzwanzig}
\langle A_1(u_k),u_k-u_{\varepsilon}\rangle\ \geq & \ \langle B(u_k,u_{\varepsilon}),u_k-u_{\varepsilon}\rangle,
\end{align}
since $B(u_k,u_k)=A_1(u_k)$, cf. Lemma \ref{lemmaForB}, (i) and (ii). Inserting \eqref{uepsilon} leads to
\begin{align*}
\varepsilon \langle A_1(u_k),u-v \rangle \geq & -\langle A_1(u_k),u_k-u \rangle \\ +& \ \langle B(u_k,u_{\varepsilon}),u_k-u\rangle + \varepsilon \langle B(u_k,u_{\varepsilon}),u-v\rangle.
\end{align*}
Considering the limit with respect to $k\to\infty$ on both sides of the inequality and using the second line of \eqref{pseudoRequirement}, together with (iii) and (iv) of Lemma \ref{lemmaForB} we get
\begin{align}\label{durchEpsilonZuTeilen}
\varepsilon \liminf\limits_{k\to \infty}\langle A_1(u_k),u-v \rangle \geq \varepsilon \langle B(u,u_{\varepsilon}),u-v\rangle .
\end{align}
Dividing by $\varepsilon>0$ and passing the limit $\varepsilon\to 0$ lead to $u_{\varepsilon}\to u$ and therefore to
\begin{align}
\label{fastDa}
\liminf\limits_{k\to \infty}\langle A_1(u_k),u-v \rangle \geq \langle A_1(u),u-v \rangle.
\end{align} 
We use the monotonicity of $B$ again, this time with $v=u$, to get
\begin{align} \label{45undzwanzig}
\langle B(u_k,u_k)-B(u_k,u),u_k-u\rangle \geq 0.
\end{align}
With \eqref{fastDa}, \eqref{45undzwanzig} and (iv) from Lemma \ref{lemmaForB} we can finally show \eqref{pseudoA} by
\begin{align*}
\liminf\limits_{k\to \infty}\langle A_1(u_k),u_k-v \rangle = \liminf\limits_{k\to \infty}\langle A_1(u_k),u_k-u \rangle +\liminf\limits_{k\to \infty}\langle A_1(u_k),u-v\rangle \\ = \lim\limits_{k\to \infty}\langle B(u_k,u),u_k-u \rangle +\liminf\limits_{k\to \infty}\langle B(u_k,u_k)-B(u_k,u),u_k-u \rangle\\+ \liminf\limits_{k\to \infty}\langle A_1(u_k),u-v \rangle \geq \langle A_1(u),u-v \rangle.
\end{align*}
\end{proof}
\end{lemma}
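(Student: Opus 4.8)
The plan is to check the two requirements in the definition of pseudomonotonicity for $A_1$ — that it is bounded and that it satisfies the $\liminf$-inequality \eqref{pseudoA} — leaning on the auxiliary bilinear operator $B$ of Lemma \ref{lemmaForB} and a Minty-type argument. Boundedness is routine: for $v \in V$ with $\|v\|_V \le 1$, Cauchy--Schwarz together with the uniform bound $\alpha' \le c_{max}$ from \eqref{alphaStrichBeschraenkt} gives $|\langle A_1(u), v\rangle| \le c_{max}\|u_x\|_H \|v_x\|_H \le c_{max}\|u\|_V$, so $\|A_1(u)\|_{V^*} \le c_{max}\|u\|_V$ for every $u \in V$.

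For the substantial part, I would fix $v \in V$ and, under the standing hypotheses $u_k \rightharpoonup u$ in $V$ and $\limsup_{k} \langle A_1(u_k), u_k - u\rangle \le 0$, introduce the convex combinations $u_\varepsilon := (1-\varepsilon)u + \varepsilon v \in V$, $\varepsilon \in (0,1)$. Since $B(u_k, u_k) = A_1(u_k)$ and $B(u_k, \cdot)$ is monotone (Lemma \ref{lemmaForB}(i),(ii)), testing monotonicity at the pair $(u_k, u_\varepsilon)$ yields $\langle A_1(u_k), u_k - u_\varepsilon\rangle \ge \langle B(u_k, u_\varepsilon), u_k - u_\varepsilon\rangle$. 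Writing $u_k - u_\varepsilon = (u_k - u) + \varepsilon(u - v)$ and isolating the term $\varepsilon\langle A_1(u_k), u - v\rangle$, I then take $k \to \infty$: the contribution of $-\langle A_1(u_k), u_k - u\rangle$ has nonnegative $\liminf$ by hypothesis, $\langle B(u_k, u_\varepsilon), u_k - u\rangle \to 0$ by Lemma \ref{lemmaForB}(iv), and $\langle B(u_k, u_\varepsilon), u - v\rangle \to \langle B(u, u_\varepsilon), u - v\rangle$ by Lemma \ref{lemmaForB}(iii). Dividing by $\varepsilon > 0$ and then letting $\varepsilon \to 0$ — so that $u_\varepsilon \to u$ in $V$ and, by continuity of the bounded linear map $B(u, \cdot): V \to V^*$, $B(u, u_\varepsilon) \to B(u, u) = A_1(u)$ — produces the intermediate estimate $\liminf_{k}\langle A_1(u_k), u - v\rangle \ge \langle A_1(u), u - v\rangle$.

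To pass from this to the required inequality \eqref{pseudoA}, which involves $u_k - v$ rather than $u - v$, I would first establish the convergence $\langle A_1(u_k), u_k - u\rangle \to 0$: the hypothesis gives $\limsup \le 0$, while monotonicity of $B$ tested at $(u_k, u)$ gives $\langle A_1(u_k), u_k - u\rangle \ge \langle B(u_k, u), u_k - u\rangle$, whose limit is $0$ by Lemma \ref{lemmaForB}(iv), hence $\liminf \ge 0$. This is precisely what licenses splitting the $\liminf$ of the sum in the last step: decomposing $\langle A_1(u_k), u_k - v\rangle = \langle A_1(u_k), u_k - u\rangle + \langle A_1(u_k), u - v\rangle$ and combining the two pieces gives $\liminf_{k}\langle A_1(u_k), u_k - v\rangle \ge \langle A_1(u), u - v\rangle$, which is \eqref{pseudoA}. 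Together with the boundedness shown first, this proves $A_1$ is pseudomonotone.

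I expect the Minty-type step to be the main obstacle: one has to see that the right object to test against is the perturbation $u_\varepsilon$ rather than $u$ itself, and then to keep careful track, through the iterated limit (first $k \to \infty$ using parts (iii)--(iv) of Lemma \ref{lemmaForB}, then $\varepsilon \to 0$ using continuity of $B(u, \cdot)$), of which terms vanish and which survive. A secondary point is that $\liminf$ is not additive, so the decomposition in the last step only works once one knows that $\langle A_1(u_k), u_k - u\rangle$ genuinely converges to $0$, and not merely that its $\limsup$ is nonpositive.
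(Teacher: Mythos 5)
Your proposal is correct and follows essentially the same route as the paper: the same boundedness estimate, the same Minty-type perturbation $u_\varepsilon=(1-\varepsilon)u+\varepsilon v$ combined with parts (i)--(iv) of Lemma \ref{lemmaForB}, and the same final decomposition of $\langle A_1(u_k),u_k-v\rangle$. If anything, you are slightly more careful than the paper at the last step, since you explicitly prove $\langle A_1(u_k),u_k-u\rangle\to 0$ to justify splitting the $\liminf$, whereas the paper writes that splitting as an equality without comment (it only needs the superadditivity inequality, so its argument is still valid).
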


\begin{corollary}
The operator $A=A_1+A_2$ is pseudomonotone as a sum of two pseudomonotone operators.
\end{corollary}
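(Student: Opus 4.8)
The statement is an immediate consequence of the structural results already assembled, so the plan is simply to chain them together. First I would recall that $A_2$ defined in \eqref{A2Definition} was shown above to be strongly continuous: for $u_k\rightharpoonup u$ in $V$ the compact embedding \eqref{sobolevEmbedding} gives $u_k(\cdot,0)\to u(\cdot,0)$ and $u_k(\cdot,L)\to u(\cdot,L)$, and continuity of $\beta_0,\beta_L$ (assumption \eqref{betaDifferenzierbar}) yields $\langle A_2(u_k),v\rangle\to\langle A_2(u),v\rangle$ for every $v\in V$; hence $A_2$ is strongly continuous and therefore pseudomonotone by Proposition \ref{proposition:strongAndPseudoImplications}, (i). Second, Lemma \ref{A1isPseudomonotone} establishes that $A_1$ is pseudomonotone (including its boundedness, cf. \eqref{alreadyShownForA1}).

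Given these two facts, I would conclude by invoking Proposition \ref{proposition:strongAndPseudoImplications}, (ii), which states precisely that the sum of two pseudomonotone operators is pseudomonotone; applying it to $A=A_1+A_2$ finishes the argument. If one wishes to be fully explicit about the boundedness requirement in the definition of pseudomonotonicity, I would note that $\|A(u)\|_{V^*}\le\|A_1(u)\|_{V^*}+\|A_2(u)\|_{V^*}$, where $\|A_1(u)\|_{V^*}\le c_{max}\|u\|_V$ by \eqref{alreadyShownForA1} and $\|A_2(u)\|_{V^*}$ is bounded by $2\beta_{max}$ times the norm of the trace operator $V\to\mathbb R^2$, so $A$ maps bounded sets to bounded sets. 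There is no real obstacle here: all the substantive work — the monotonicity trick via the auxiliary bilinear operator $B$ in Lemma \ref{lemmaForB} and the limit passage in Lemma \ref{A1isPseudomonotone} — has already been carried out, and the corollary is a one-line bookkeeping step.
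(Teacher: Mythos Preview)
Your proposal is correct and matches the paper exactly: the corollary is stated without proof precisely because, as you say, the work has already been done in showing $A_2$ strongly continuous (hence pseudomonotone via Proposition~\ref{proposition:strongAndPseudoImplications}(i)) and $A_1$ pseudomonotone in Lemma~\ref{A1isPseudomonotone}, and Proposition~\ref{proposition:strongAndPseudoImplications}(ii) then gives the sum. Your additional remark on boundedness is a harmless elaboration that the paper omits.
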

\ \\ \vspace{-9mm}

\begin{lemma}\label{lemma:coercive}
If $p=2$, then $A:V\to V^*$ from \eqref{operatorA} is weakly coercive and satisfies the growth condition \eqref{growthCondition}.
\end{lemma}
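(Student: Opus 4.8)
The plan is to estimate $\langle A(u),u\rangle$ from below and $\|A(u)\|_{V^*}$ from above by splitting $A = A_1 + A_2$ exactly as in the pseudomonotonicity argument, and treating the bulk term $A_1$ and the boundary term $A_2$ separately. For the bulk part, I would write $\langle A_1(u),u\rangle = \int_\Omega \alpha'(u)\,u_x^2\,\mathrm{d}x \geq c_{min}\|u_x\|_H^2$ using \eqref{alphaStrichBeschraenkt}, and then recover the missing $\|u\|_H^2$ contribution needed for the $V$-norm via the identity $\|u\|_V^2 = \|u\|_H^2 + \|u_x\|_H^2$: that is, $c_{min}\|u_x\|_H^2 = c_{min}\|u\|_V^2 - c_{min}\|u\|_H^2$. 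With $p=2$ this already has the shape $c_1\|u\|_V^p - c_2\|u\|_H^2$ demanded by Definition \ref{weaklyCoercive}.

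The remaining task is to absorb the boundary term $\langle A_2(u),u\rangle = \beta_L(u(\cdot,L))u(\cdot,L) + \beta_0(u(\cdot,0))u(\cdot,0)$, which is bounded in absolute value by $2\beta_{max}\,\|u\|_{\mathcal{C}(\bar\Omega)}$ using \eqref{beta_maxDefinition}. Here the crucial tool is the compact (in particular continuous) embedding $V \subset\subset \mathcal{C}(\bar\Omega)$ from \eqref{sobolevEmbedding}, which gives $\|u\|_{\mathcal{C}(\bar\Omega)} \leq c_S\|u\|_V$ for some constant $c_S$. A naive bound then costs a term linear in $\|u\|_V$, which is not quadratic; to fix this I would invoke an interpolation/Ehrling-type inequality: for every $\eta>0$ there is $C_\eta>0$ with $\|u\|_{\mathcal{C}(\bar\Omega)} \leq \eta\|u\|_V + C_\eta\|u\|_H$, which follows from the compactness of $V\hookrightarrow\mathcal{C}(\bar\Omega)$ together with the continuity of $V\hookrightarrow H$ (a standard consequence of the compact embedding, e.g.\ via a contradiction argument). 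Choosing $\eta$ small enough that $2\beta_{max}\eta \leq \tfrac{1}{2}c_{min}$ and then applying Young's inequality $2\beta_{max}C_\eta\|u\|_H \leq$ (absorb nothing further — $\|u\|_H^2$ terms are allowed on the negative side) yields $\langle A(u),u\rangle \geq \tfrac{1}{2}c_{min}\|u\|_V^2 - c_2\|u\|_H^2 - c_3$; the harmless additive constant $c_3$ can be dropped by enlarging $c_2$ since $\|u\|_H$ controls constants only up to the bounded time interval, or simply by noting weak coercivity tolerates it after a further Young step. This establishes weak coercivity with $c_1 = \tfrac12 c_{min}$.

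For the growth condition \eqref{growthCondition} with $p=2$, so that $1 + \|u\|_V^{p-1} = 1 + \|u\|_V$, I would use the already-established bound \eqref{alreadyShownForA1}, namely $\|A_1(u)\|_{V^*} \leq c_{max}\|u\|_V$, for the bulk part. For the boundary part, $\|A_2(u)\|_{V^*} = \sup_{\|v\|_V\leq 1}|\beta_L(u(\cdot,L))v(L) + \beta_0(u(\cdot,0))v(0)| \leq 2\beta_{max}\,c_S$ using \eqref{beta_maxDefinition} and the embedding \eqref{sobolevEmbedding}, a bounded constant. Adding the two gives $\|A(u)\|_{V^*} \leq c_{max}\|u\|_V + 2\beta_{max}c_S \leq \max\{c_{max}, 2\beta_{max}c_S\}(1 + \|u\|_V)$, which is of the required form \eqref{growthCondition} with the constant (increasing) function $\varphi \equiv \max\{c_{max}, 2\beta_{max}c_S\}$ — in fact here $\varphi$ need not depend on $\|u\|_H$ at all.

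I expect the main obstacle to be the boundary term in the coercivity estimate: the boundary traces are \emph{not} controlled by $\|u\|_H = \|u\|_{L^2(\Omega)}$ alone, so one genuinely needs the compact embedding $V\subset\subset\mathcal{C}(\bar\Omega)$ and an interpolation inequality to keep the $\|u\|_V$-cost arbitrarily small while paying it off against a $-c_2\|u\|_H^2$ term. Everything else is a routine application of \eqref{alphaStrichBeschraenkt}, \eqref{beta_maxDefinition}, the norm identity on $V$, and the bound \eqref{alreadyShownForA1} already derived in the proof of Lemma \ref{A1isPseudomonotone}.
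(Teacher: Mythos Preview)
Your argument for the growth condition is correct and essentially identical to the paper's: you split $A=A_1+A_2$, invoke the bound \eqref{alreadyShownForA1} for $A_1$, bound $\|A_2(u)\|_{V^*}$ by a constant via \eqref{beta_maxDefinition} and the embedding \eqref{sobolevEmbedding}, and take $\varphi$ to be the constant $\max\{c_{max},\,2\beta_{max}c_S\}$.

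For weak coercivity, however, your route differs from the paper's and carries a genuine gap. You estimate the boundary term in absolute value and then try to absorb it via an Ehrling-type interpolation $\|u\|_{\mathcal{C}(\bar\Omega)}\le\eta\|u\|_V+C_\eta\|u\|_H$. After Young's inequality this leaves an additive constant $-c_3$ on the right, which you claim can be ``dropped by enlarging $c_2$'' or handled ``after a further Young step''. Neither works: Definition~\ref{weaklyCoercive} requires $\langle A(u),u\rangle\ge c_1\|u\|_V^2-c_2\|u\|_H^2$ with no free constant, and since $\|u\|_H$ can be arbitrarily small you cannot dominate a fixed positive number by $c_2\|u\|_H^2$ for all $u\in V$. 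Young's inequality converts products into sums of squares plus constants; it does not make constants disappear.

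The missing idea is much simpler and avoids the obstacle entirely. In this model $u>0$ (enthalpy is positive, as stated before \eqref{betaDifferenzierbar}) and $\beta_0,\beta_L>0$ by assumption \eqref{betaDifferenzierbar}. Hence the boundary contribution
\[
\langle A_2(u),u\rangle=\beta_L(u(\cdot,L))\,u(\cdot,L)+\beta_0(u(\cdot,0))\,u(\cdot,0)
\]
is nonnegative and can simply be discarded from below, giving
\[
\langle A(u),u\rangle\ \ge\ \langle A_1(u),u\rangle\ \ge\ c_{min}\|u_x\|_H^2\ =\ c_{min}\|u\|_V^2-c_{min}\|u\|_H^2,
\]
which is exactly the required inequality with $c_1=c_2=c_{min}$ and no residual constant. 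This is precisely the paper's one-line proof; the ``main obstacle'' you anticipated never arises once you exploit the sign of the boundary term rather than its size.
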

\begin{proof}
The weak coercivity follows immediately from $\beta_0,\beta_L,u>0$ and \eqref{alphaStrichBeschraenkt}: 
\begin{align*}
    \langle A(u),u \rangle \geq c_{min} \|u_x\|_H^2=c_{min} \|u\|_V^2-c_{min} \|u\|_H^2.
\end{align*}
For $u \in V$ we can further estimate
\begin{align*}
\|A(u)\|_{V^*}=&\sup\limits_{v \in V,\ \|v\|\leq 1} \left|\left<A(u),v\right>\right|\\ \underset{\eqref{beta_maxDefinition}}{\overset{\eqref{alreadyShownForA1}}{\leq}} &c_{max}\|u\|_V+\sup\limits_{v \in V,\ \|v\|\leq 1} \left(2\beta_{max} \sup\limits_{x \in \bar{\Omega}}|v(x)|\right)\\
\leq &c_{max}\|u\|_V+2\beta_{max}\sup\limits_{v \in V,\ \|v\|\leq 1} \left( c_{c}\|v\|_V\right)\\
\leq &c\left(1+\|u\|_V \right),
\end{align*}
where $c:=\max\left\{c_{max},2\beta_{\max}c_{c}\right\}$ and $c_{c}$ is the norm of the embedding $V\hookrightarrow  \mathcal{C}(\bar{\Omega})$, c.f. \eqref{sobolevEmbedding}. This shows \eqref{growthCondition} for $p=2$ since $c>0$ can be replaced by any appropriate function $\varphi$.
\end{proof}
Note that we did not specify the functional setting with respect to the time variable so far. Lemma \ref{lemma:coercive} suggests the exponent $p=2$ and defining the underlying Bochner space as $L^2(I;V)$. Since $p'=2$, we can also fix the Sobolev-Bochner space
\begin{align}
    \mathcal{U}:= W^{1,2,2}(I;V,V^*),
\end{align}
which is even a Hilbert space (see \cite{alphonse2014abstract}),
as the space of possible strong solutions to \eqref{insertBoundaryConditionsCauchyProblem1} - \eqref{insertBoundaryConditionsCauchyProblem2}.

\begin{theorem}\label{mainTheorem_Existence_Uniqueness}
The abstract Cauchy problem \eqref{insertBoundaryConditionsCauchyProblem1} - \eqref{insertBoundaryConditionsCauchyProblem2} has a strong solution $u\in \mathcal{U}\subset \mathcal{C}(I;H).$
If additionally there exists a constant 
\begin{align}
    \label{alphaStrichConst}
     \alpha'_c\geq c_u:=2\beta'_{max}c_c^2>0
\end{align}
with $\beta'_{max}$ from \eqref{betaStrich_maxDefinition} and where $c_c$ is the norm of the embedding $V\hookrightarrow  \mathcal{C}(\bar{\Omega}),$ such that
\begin{align}
    \label{requirementForA1}
    \int_{\Omega} \left(\alpha'(u)u_x-\alpha'(v)v_x\right)(u_x-v_x)\ \mathrm{d}x \geq \alpha'_c \int_{\Omega} (u_x-v_x)^2\ \mathrm{d}x = \alpha'_c\|u_x-v_x\|_H^2, \ \ \  \forall u,v \in V,
\end{align}
then the strong solution is even unique.
\begin{proof}
We aim to apply Theorem \ref{existence_roubicek}. Note that in our case $f\equiv 0$. The existence part of the proof then follows directly from the preliminaries, especially by using Lemma \ref{A1isPseudomonotone} and Lemma \ref{lemma:coercive}.

Furthermore, we can estimate
\begin{align*}
 \langle A(u)-A(v),u-v \rangle &\geq \alpha'_c\|u_x-v_x\|_H^2-|\beta_L(u(L))-\beta_L(v(L))|\cdot|u(L)-v(L)|\\
 &\ \ \ \ \ \ -|\beta_0(u(0))-\beta_0(v(0))|\cdot|u(0)-v(0)|\\
 &\overset{\eqref{betaStrich_maxDefinition}}{\geq} \alpha'_c\|u_x-v_x\|_H^2-2\beta'_{max} c_c^2 \|u-v\|_V^2\\
  &\geq -c_u\|u-v\|^2_H+(\alpha'_c-c_u)\|u_x-v_x\|_H^2\\
  &\overset{\eqref{alphaStrichConst}}{\geq} -c_u\|u-v\|^2_H,
 \end{align*}
which proves the uniqueness condition \eqref{uniquenessRequirement} of the strong solution.
\end{proof}
\end{theorem}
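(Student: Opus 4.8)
The plan is to invoke Theorem~\ref{existence_roubicek} directly, having already assembled its hypotheses in the preceding lemmas. First I would observe that in our setting $f\equiv 0\in L^{2}(I;V^*)$ trivially, and $u_0\in H=L^2(\Omega)$ by the standing assumption $u_{max}\geq\esssup_{x\in\bar\Omega}u_0(x)$ (so $u_0\in L^\infty(\Omega)\subset L^2(\Omega)$ on the bounded interval $\Omega$). The operator $A$ from \eqref{operatorA} is pseudomonotone by the Corollary (i.e.\ $A=A_1+A_2$ with $A_1$ pseudomonotone by Lemma~\ref{A1isPseudomonotone} and $A_2$ strongly continuous hence pseudomonotone), and it is weakly coercive and satisfies the growth condition \eqref{growthCondition} with exponent $p=2$ by Lemma~\ref{lemma:coercive}. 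Hence all hypotheses of the first part of Theorem~\ref{existence_roubicek} are met, and it yields a strong solution $u\in W^{1,2,2}(I;V,V^*)=\mathcal{U}$, with the continuous embedding $\mathcal{U}\hookrightarrow\mathcal{C}(I;H)$ quoted earlier giving $u\in\mathcal{C}(I;H)$.

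For uniqueness, the plan is to verify the additional hypothesis \eqref{uniquenessRequirement} of Theorem~\ref{existence_roubicek}, namely that $\langle A(u)-A(v),u-v\rangle\geq -c\|u-v\|_H^2$ for some $c>0$ and all $u,v\in V$. I would split $A=A_1+A_2$ and estimate the two pieces separately. For the $A_1$ part, $\langle A_1(u)-A_1(v),u-v\rangle=\int_\Omega(\alpha'(u)u_x-\alpha'(v)v_x)(u_x-v_x)\,\mathrm dx$, which by the extra assumption \eqref{requirementForA1} is bounded below by $\alpha'_c\|u_x-v_x\|_H^2$. For the boundary part $A_2$, I would bound $\langle A_2(u)-A_2(v),u-v\rangle$ from below by $-|\beta_L(u(L))-\beta_L(v(L))|\,|u(L)-v(L)|-|\beta_0(u(0))-\beta_0(v(0))|\,|u(0)-v(0)|$; using the mean value theorem with the bound $\beta'_{max}$ from \eqref{betaStrich_maxDefinition} on each factor $|\beta_\bullet(u(\bullet))-\beta_\bullet(v(\bullet))|\leq\beta'_{max}|u(\bullet)-v(\bullet)|$, and then the pointwise bound $|w(0)|,|w(L)|\leq c_c\|w\|_V$ coming from the embedding $V\hookrightarrow\mathcal{C}(\bar\Omega)$ in \eqref{sobolevEmbedding}, this piece is at least $-2\beta'_{max}c_c^2\|u-v\|_V^2=-c_u\|u-v\|_V^2$ with $c_u$ as in \eqref{alphaStrichConst}.

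Combining, $\langle A(u)-A(v),u-v\rangle\geq \alpha'_c\|u_x-v_x\|_H^2-c_u\|u-v\|_V^2=(\alpha'_c-c_u)\|u_x-v_x\|_H^2-c_u\|u-v\|_H^2$, where I expand $\|u-v\|_V^2=\|u-v\|_H^2+\|u_x-v_x\|_H^2$. By the hypothesis $\alpha'_c\geq c_u$ the first term is nonnegative and can be dropped, leaving $\langle A(u)-A(v),u-v\rangle\geq -c_u\|u-v\|_H^2$, which is exactly \eqref{uniquenessRequirement} with $c=c_u>0$. Theorem~\ref{existence_roubicek} then gives uniqueness of the strong solution.

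The main obstacle here is not the coercivity or growth bookkeeping, which is routine, but rather the structural assumption \eqref{requirementForA1}: the map $u_x\mapsto\alpha'(u)u_x$ is genuinely monotone only up to the correction caused by $\alpha'$ depending on $u$ itself, and controlling the resulting $\|u-v\|_H$-type remainder is precisely what forces the quantitative threshold $\alpha'_c\geq 2\beta'_{max}c_c^2$. I would be careful to state \eqref{requirementForA1} as a genuine standing hypothesis of the uniqueness part (it does not follow from \eqref{alphaStrichBeschraenkt} alone), and to note that it is automatically satisfied in natural special cases, e.g.\ when $\alpha'$ is constant, in which case the left-hand side equals $\alpha'\|u_x-v_x\|_H^2$ and \eqref{requirementForA1} holds with $\alpha'_c=\alpha'$ provided $\alpha'$ is large enough relative to the boundary Lipschitz constants. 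Everything else reduces to the Lipschitz estimates for $\beta_0,\beta_L$ and the trace bound from the one-dimensional Sobolev embedding, both already available.
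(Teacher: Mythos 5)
Your proposal is correct and follows essentially the same route as the paper: existence via Theorem \ref{existence_roubicek} with $f\equiv 0$, using Lemma \ref{A1isPseudomonotone} and Lemma \ref{lemma:coercive}, and uniqueness by verifying \eqref{uniquenessRequirement} through the split $A=A_1+A_2$, the lower bound \eqref{requirementForA1} for $A_1$, the Lipschitz-plus-embedding estimate $2\beta'_{max}c_c^2\|u-v\|_V^2$ for the boundary terms, and the expansion $\|u-v\|_V^2=\|u-v\|_H^2+\|u_x-v_x\|_H^2$ together with the threshold $\alpha'_c\geq c_u$. Your added remarks (that \eqref{requirementForA1} is a genuine extra hypothesis, trivially satisfied for constant $\alpha'$) mirror the paper's own subsequent remark.
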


\begin{remark}
Compared to the rigorous requirements on the involved functions to guarantee a classical solution $u\in H^{2+l,1+l/2}(\bar{Q})$, the assumptions \eqref{betaDifferenzierbar} - \eqref{alphaStrichBeschraenkt} for the existence of a strong solution $u\in \mathcal{U}$ are rather weak. We furthermore get the following useful result:

Since $\mathcal{U}\subset L^2(I;V)\subset L^2(I;H)$ we define the Gelfand triple
\begin{align}
    \label{newGelfandTriple}
    \mathcal{U}\subset L^2(I;H)\cong L^2(Q) \subset \mathcal{U}^*.
\end{align}
Similar to \eqref{innerproductExtension}, we identify the duality pairing on $\mathcal{U}^*\times \mathcal{U}$ with the inner product on $L^2(Q)$, i.e.
\begin{align}
    \label{innerproductExtensionOnU}
  \langle u, v\rangle_{\mathcal{U}^* \times \mathcal{U}} = (u,v)_{L^2(Q)} = \int_I\int_{\Omega} u(t,x)v(t,x)\ \mathrm{d}x\ \mathrm{d}t  \ \ \ \ \ \text{ for } u \in L^2(Q) \text { and } v \in \mathcal{U}. 
\end{align}
If we replace $\mathcal{U}$ by the non-reflexive H\"older space $H^{2+l,1+l/2}(\bar{Q})$, then the definition of a Gelfand triple is impossible. The strong solution theory yields \eqref{innerproductExtensionOnU} as a convenient tool in order to compute the gradient of the objective functional, see Section \ref{sec:inverseHCP} for more details. 

Unfortunately, there is a drawback concerning the uniqueness of the strong solution $u\in \mathcal{U}$.
Proving the existence of $\alpha'_c>0$ fulfilling \eqref{alphaStrichConst} seems only being simple for the constant case, i.e. for $\alpha'(u)=\alpha'(v),\  \forall u,v \in \mathbb{R_+}$.

Nevertheless, in order to ensure the uniqueness of a strong solution in the non-constant case, we can still rely on the theory of classical solutions. Since every classical solution is also a solution in some weaker sense, we gain a unique strong solution this way by only requiring stronger assumptions to the coefficients according to Theorem \ref{theoremClassicalSolution}. 
\end{remark}

\section{The inverse heat convection problem}\label{sec:inverseHCP}

In this section we consider the inverse heat convection problem (IHCP) which consists of computing the heat fluxes $\beta_0$ and $\beta_L$ from the knowledge of enthalpy measurements $u^{\delta}$. To solve this problem numerically, we aim to minimize the objective functional $f$ from \eqref{objFunctional} iteratively. To this end we need to derive the gradient \eqref{gradientOfObjectiveFunctional} after specifying the composition \eqref{compositionForF}, i.e. the operators $S$ and $\mathcal{Q}$, in more detail.

\subsection{The parameter-to-solution operator $S$}
\label{subsec:parametrizationApproach}
Given the IBVP \eqref{IBVP1} - \eqref{IBVP4}, the assumptions in the previous section guarantee a strong solution $u\in \mathcal{U}$. As already discussed in the introduction, a solution operator mapping enthalpy-dependent heat fluxes $\beta_0(u), \beta_L(u)\in \mathcal{C}^1([0,u_{max}])$ in \eqref{IBVP2} - \eqref{IBVP3} to the solution $u\in \mathcal{U}$ can only be defined implicitly by
\begin{align}
    \label{implicitlyGivenPTSOperator}
    S(\beta_0(u),\beta_L(u),u)=0
\end{align}
with an appropriate mapping $S$. A sensitivity analysis of the operator $S$ with respect to the heat fluxes could be performed, e.g., by means of the implicit mapping theorem in Banach spaces, cf. \cite{lang2012fundamentals}. We note, that the treatment of implicitly given solution operators and the corresponding inverse problems are a mathematically very interesting task, which seems not to be widely studied in current literature. In this paper, we propose a simple but advantageous approach to numerically treat this problem that allows us to overcome the implicitness and define a parameter-to-solution operator explicitly by
\begin{align}
\label{explicitlyGivenPTSOperator}
    S:B\subset\mathbb{R}^{2n}&\to \mathcal{U},\\
    \pmb{\beta}&\mapsto u.
\end{align}

Heat fluxes $\beta_0,\beta_L$ fulfilling \eqref{betaDifferenzierbar} - \eqref{betaStrich_maxDefinition} are then represented as follows: Let $n>0$ and
\begin{align}
    \label{definitionPIn}
   \pi_n:0=u_1<u_2<\cdots<u_n=u_{max}
\end{align} be a partition of the interval $U:=[0,u_{max}]$. Given $\pi_n$ and a set of values $\pmb{\beta}\in\mathbb{R}^{2n}$, we construct piecewise cubic Hermite interpolating polynomials $\tilde{\beta_0}$ and $\tilde{\beta_L}$ in $\mathcal{C}^1(U)$, so-called PCHIPs, such that 
\begin{align}
\label{interpolationPairs}
        \tilde{\beta_0}(u_i)=\beta_i,  \ \ \ \ 
    \tilde{\beta_L}(u_i)=\beta_{n+i},  \ \ \ \ \text{for\ \ \ \ } i=1,\dots,n.
\end{align}
Choosing $\pmb{\beta}\in \mathbb{R}^{2n}$ appropriately, the approximation properties of piecewise cubic interpolation methods guarantee that for all $\epsilon>0$ there exists a number $n$ of partition points with
\begin{align*}
    \max\limits_{u \in U}|\tilde{\beta_0}(u)-\beta_0(u)|<\epsilon,\\
    \max\limits_{u \in U}|\tilde{\beta_L}(u)-\beta_L(u)|<\epsilon.
\end{align*}
We refer to the Section \ref{pchipsApproximationseigenschaft} for a proof of this approximation property and a numerical example on how to determine the number $n$ of partition points for given $\varepsilon>0$.

Hence, by neglecting a small error, we can replace $\beta_0$ and $\beta_L$ by $\tilde{\beta_0}$ and $\tilde{\beta_L}$. For better readability, we will drop the tilde in $\tilde{\beta_0}$ and $\tilde{\beta_L}$ and just refer to $\beta_0$ and $\beta_L$, noting that we actually mean the interpolants $\beta_0(u,\pmb{\beta})$ and $\beta_L(u,\pmb{\beta}).$ 

\begin{remark}
Compared to a dictionary approach
\begin{align}
    \label{dictionaryRepresentation}
    \beta_0(u)  = \sum\limits_{i=1}^n \beta_i B_{0,i}(u), \ \ \ \ 
     \beta_L(u)  = \sum\limits_{i=1}^n \beta_{n+i} B_{L,i}(u),
\end{align}
where an approximation is made by a linear combination of a-priori known physically meaningful functions
\begin{align}
    \label{dictionaryOfFunctions}
    \left\{B_{0,i}\right\}_{i=1,\dots,n},  \ \ \ \ \left\{B_{L,i}\right\}_{i=1,\dots,n},
\end{align}
the proposed PCHIP approach has a local adjustment property as it carries not weights but rather locally fixed function values. A desired change of the heat fluxes on $[u_a,u_b]\subset U$ only affects parameter components in this subinterval, i.e. only for $i=1,\dots,2n$ with $\beta_i \in [u_a,u_b].$ This might have an additional stabilizing effect on the solution of the dynamic (i.e. time-dependent) inverse problem, especially when enthalpy (or temperature) values are not re-occurring often, as the enthalpy subinterval $[u_a,u_b]$ can approximately be assigned to a time subinterval $[t_a,t_b]\in I$. The optimized heat fluxes can then be trusted in this time interval and the optimization in the next time interval does not affect the previous one, hence achieving some kind of time-marching-scheme effect without really locking the parameters in first place. 
With the PCHIP approach, we also do not limit the scope of the inverse problem by prescribing the structure of the solution too much. Also, increasing the complexity of the solution space for the PCHIP approach by adjusting the number $n>0$ of partition points, rather than the polynomial order in the parametrization approach
\begin{align}
    \label{polynomialParametrizationApproach}
    \beta_0(u) = \sum\limits_{i=1}^n \beta_i u^{i-1},
    \beta_L(u) = \sum\limits_{i=1}^n \beta_{n+i} u^{i-1},
\end{align}
is more stable and numerically superior regarding the IHCP.
\end{remark}

The PCHIP interpolation method itself was first proposed in \cite{fritschCarlson} in order to introduce a piecewise cubic interpolation method which respects the monotonicity of the function values. In this sense, the interpolant is shape-preserving. For solving inverse problems this behavior is favorable because maxima and minima of the interpolant are retained by the function value parameter $\pmb{\beta}\in \mathbb{R}^{2n},$ while other interpolation methods might introduce overshoots and undesirable oscillations. Hence, a-priori informations can be better incorporated. Still, the parametrization approach \eqref{definitionPIn} - \eqref{interpolationPairs} would work for any piecewise interpolation method. As the interpolation method choice is actually of secondary importance, we refer the reader to Section \ref{pchipsKonstruktion} for further construction details on PCHIPs.

Finally, we are able to specify \eqref{explicitlyGivenPTSOperator}.
For some fixed partition $\pi_n$ and the PCHIP interpolation method, $u_0 \in H$ and $\alpha(u)\in \mathcal{C}^1(U)$ the parameter-to-solution operator $S$ is given by
\begin{align}
\label{processChain}
    \pmb{\beta} \in B=[0,\beta_{max}]^{2n} \xrightarrow{\text{Interpolation}} \beta_0,\beta_L\in \mathcal{C}^1(U)\xrightarrow{\text{Inserting}} \eqref{IBVP2}-\eqref{IBVP3} \xrightarrow{\text{Solving the IBVP}} u\in \mathcal{U}.
\end{align}

\subsection{The observation operator $\mathcal{Q}$}\label{subsec:observationOperator}

Usually, the measurement process does not acquire the full enthalpy solution $S(\pmb{\beta})=u\in \mathcal{U}$ of the IBVP \eqref{IBVP1} - \eqref{IBVP4}, but rather some enthalpy state $u_s$. This leads to the definition of the so-called observation operator 

\begin{align}
    \label{observationOperator}
    \mathcal{Q}: \mathcal{U} &\to Y,\\
    u & \mapsto u_s,
\end{align}
which maps $u$ to the measured enthalpy $u_s$. Moreover we assume that only a noise-contaminated version $u^{\delta}$ of $u_s$ is given, cf. \eqref{noiseLevel}.

Recalling the motivation of this paper, in the TMCP process, thermocouples are inserted into the steel plate at depths $x_j\in \Omega$ for $j=1,\dots,d$, which all measure temperatures during the ACC process at time instances $t_i\in I$ for $i=1,\dots,m.$ Thus, from the application point of view, the choice $Y=\mathbb{R}^{d\times m}$ makes sense.
For $u\in \mathcal{U}$ the components of the enthalpy state $u_s \in Y$ are then given by
\begin{align}
\label{specifiedObservationOperator}
    (u_s)_{j,i}=(\mathcal{Q}u)_{j,i} =  u(t_i,x_j).
\end{align}
The point evaluations in time and space were justified in the previous section.

We note that the observation operator is linear, i.e. the Fr\'{e}chet derivative is $\mathcal{Q}'=\mathcal{Q}$. 
With $\delta(\cdot)$ being the Dirac delta function, the adjoint operator
\begin{align}
\label{adjointOfObservationOperator}
    \mathcal{Q}^*: Y=Y^*&\to \mathcal{U}^*,\\
    v & \mapsto \sum\limits_{j,i}\delta(\cdot-t_i)\delta(\cdot-x_j)v_{j,i}
\end{align}
is verified by utilizing the Frobenius inner product $\left(A,B\right)_Y=\sum\limits_{j,i} A_{ji}B_{ji}$ for two real-valued matrices on $Y$ together with the inner product extension \eqref{innerproductExtensionOnU}, such that
\begin{align}
    \label{derivationAdjointOfObservationOperator}
    (\mathcal{Q}u,v)_Y = (u,\mathcal{Q}^*v)_{L^2(Q)}.
\end{align}

\subsection{The linearized and adjoint linearized parameter-to-solution operator}\label{subsec:linearizedAndAdjointLinearizedParameterToSolutionOperator}

In this subsection we deduce the missing ingredients in order to compute the gradient \eqref{gradientOfObjectiveFunctional} and especially the adjoint operator $F'(\pmb{\beta})^*$.

Assuming for the moment that $S$ possesses a Fr\'{e}chet derivative $S'$, from \eqref{compositionForF} it follows that 
\begin{align}
    \label{adjointOperatorOfFequalsSPrimeStarQStar}
    F'(\pmb{\beta})^* = S'(\pmb{\beta})^*\circ Q^*,
\end{align}
which is a standard result in functional analysis, cf. \cite{rudin1991functional}.

We start by linearizing the parameter-to-solution operator $S$ yielding
\begin{align}\label{gateaux1}
    \tilde{S}_{\pmb{\beta}}:\mathbb{R}^{2n}&\to \mathcal{U},\\ \label{gateaux2}
    \mathbf{h}&\mapsto \lim\limits_{\epsilon\to 0}\frac{S(\pmb{\beta}+\epsilon \mathbf{h})-S(\pmb{\beta})}{\epsilon},
\end{align}
and by replacing $S'(\pmb{\beta})$ with $ \tilde{S}_{\pmb{\beta}}$ in \eqref{adjointOperatorOfFequalsSPrimeStarQStar}. Presuming that $(\pmb{\beta}+\epsilon\mathbf{h})\in B$ and that the limit in \eqref{gateaux2} exists,  
\begin{align}
    \label{wEqualsLinearizedSbetaH}
w:=\tilde{S}_{\pmb{\beta}}\pmb{h}
\end{align}
represents the G\^{a}teaux derivative, i.e. the directional derivative of $S$, evaluated in $\pmb{\beta}\in B$ with respect to $\mathbf{h}\in \mathbb{R}^{2n}$. For a fixed direction $\mathbf{h}$ we have $\tilde{S}_{\pmb{\beta}}\mathbf{h}=S'(\pmb{\beta})\mathbf{h}$ and consequently
\begin{align}\label{adjungierteSindGleich}
\tilde{S}_{\pmb{\beta}}^*v=S'(\pmb{\beta})^*v,\ \forall v \in \mathcal{U}^*.
\end{align}

In the following we aim to derive the computation of \eqref{wEqualsLinearizedSbetaH} and the adjoint state $\tilde{S}_{\pmb{\beta}}^*v$ for $v \in \mathcal{U}^*$, where
\begin{align} \label{adjointOperatorOfLinearizedParameterToSolutionOperator}
    \tilde{S}_{\pmb{\beta}}^*:\mathcal{U}^*\to \mathbb{R}^{2n}
\end{align} is the adjoint operator which fulfills
\begin{align}
\label{definitionAdjointOperator}
    \langle \tilde{S}_{\pmb{\beta}}\mathbf{h},v\rangle_{\mathcal{U}\times \mathcal{U}^*} = ( \mathbf{h},\tilde{S}_{\pmb{\beta}}^*v)_2, \ \ \forall v \in \mathcal{U}^*.
\end{align}

\begin{lemma}
For some fixed $\pmb{\beta}\in B$, let $u:=S(\pmb{\beta})$ be the associated solution of the IBVP \eqref{IBVP1} - \eqref{IBVP4}. The operator $\tilde{S}_{\pmb{\beta}}$
then maps $\mathbf{h}\in \mathbb{R}^{2n},$ with $(\pmb{\beta}+\epsilon\mathbf{h})\in B$, to the solution $w$ of the initial boundary value problem
\begin{align}
      \label{gateauxPDE1}
    w_t &= (\alpha'(u)w)_{xx},  &t \in I,\  x\in \Omega,\\
    (\alpha'(u)w)_x &= \beta_0'(u,\pmb{\beta})w+\nabla  \beta_0(u,\pmb{\beta})\cdot \mathbf{h}, &t \in I,\  x=0,\label{gateauxPDE2}\\
    -(\alpha'(u)w)_x &= \beta_L'(u,\pmb{\beta})w+\nabla \beta_L(u,\pmb{\beta})\cdot \mathbf{h}, &t \in I,\  x=L,\label{gateauxPDE3} \\
    w&=0, &t=0, x\in \bar{\Omega}. \label{gateauxPDE4}
\end{align}
Here $\beta_0'$ and $\beta_L'$ are the derivatives with respect to the first variable, while $\nabla \beta_0$ and $\nabla \beta_L$ are the gradients with respect to the second variable.
\end{lemma}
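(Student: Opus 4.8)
The plan is to differentiate the IBVP \eqref{IBVP1}--\eqref{IBVP4} formally with respect to the parameter $\pmb{\beta}$ in the direction $\mathbf{h}$ and then verify that the resulting linearized problem is exactly \eqref{gateauxPDE1}--\eqref{gateauxPDE4}. Concretely, fix $\pmb{\beta}\in B$, let $u = S(\pmb{\beta})$, and for small $\epsilon>0$ with $\pmb{\beta}+\epsilon\mathbf{h}\in B$ set $u^\epsilon := S(\pmb{\beta}+\epsilon\mathbf{h})$. I would introduce the difference quotient $w^\epsilon := (u^\epsilon - u)/\epsilon$ and identify $w = \lim_{\epsilon\to 0} w^\epsilon$, which by definition \eqref{gateaux2}--\eqref{wEqualsLinearizedSbetaH} is $\tilde{S}_{\pmb{\beta}}\mathbf{h}$. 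The strategy is then: first write down the equations and boundary conditions satisfied by $w^\epsilon$ by subtracting the IBVP for $u$ from that for $u^\epsilon$ and dividing by $\epsilon$; second, pass to the limit $\epsilon\to 0$ in each equation using a first-order Taylor expansion of the nonlinear terms $\alpha'(\cdot)$, $\beta_0(\cdot,\cdot)$, $\beta_L(\cdot,\cdot)$; third, observe that the limiting problem is precisely \eqref{gateauxPDE1}--\eqref{gateauxPDE4}.

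For the interior equation, from $u^\epsilon_t = (\alpha'(u^\epsilon)u^\epsilon_x)_x$ and $u_t = (\alpha'(u)u_x)_x$ we get $w^\epsilon_t = \big((\alpha'(u^\epsilon)u^\epsilon_x - \alpha'(u)u_x)/\epsilon\big)_x$. Writing $\alpha'(u^\epsilon)u^\epsilon_x - \alpha'(u)u_x = \alpha'(u^\epsilon)(u^\epsilon_x - u_x) + (\alpha'(u^\epsilon)-\alpha'(u))u_x$ and dividing by $\epsilon$, the first term tends to $\alpha'(u)w_x$ and the second, using $(\alpha'(u^\epsilon)-\alpha'(u))/\epsilon \to \alpha''(u)w$, tends to $\alpha''(u)w\,u_x$; together this is $(\alpha'(u)w_x + \alpha''(u)u_x w)_x = (\alpha'(u)w)_{xx}$, which gives \eqref{gateauxPDE1}. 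For the boundary condition at $x=0$, from $\alpha'(u^\epsilon)u^\epsilon_x = \beta_0(u^\epsilon,\pmb{\beta}+\epsilon\mathbf{h})$ and the corresponding identity for $u$, the left-hand side divided by $\epsilon$ converges to $(\alpha'(u)w)_x$ as above (evaluated at $x=0$), while on the right $\big(\beta_0(u^\epsilon,\pmb{\beta}+\epsilon\mathbf{h}) - \beta_0(u,\pmb{\beta})\big)/\epsilon$ splits, by adding and subtracting $\beta_0(u^\epsilon,\pmb{\beta})$, into a term converging to $\beta_0'(u,\pmb{\beta})w$ (derivative in the first argument) and a term converging to $\nabla\beta_0(u,\pmb{\beta})\cdot\mathbf{h}$ (directional derivative in the second argument, which exists and is smooth because $\beta_0(u,\pmb{\beta})$ is a PCHIP, cf. Section \ref{subsec:parametrizationApproach}). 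This yields \eqref{gateauxPDE2}, and \eqref{gateauxPDE3} follows identically at $x=L$. The initial condition \eqref{gateauxPDE4} is immediate since $u^\epsilon(0,\cdot) = u_0 = u(0,\cdot)$, so $w^\epsilon(0,\cdot)\equiv 0$.

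The main obstacle is making the limit passage rigorous rather than merely formal: one must justify that $w^\epsilon$ actually converges (in an appropriate topology, e.g. in $\mathcal{U}$ or at least in $L^2(I;V)$ with the relevant boundary traces) and that the convergence is strong enough to pass to the limit inside the spatial derivatives and boundary traces. This requires an a priori bound on $\|w^\epsilon\|$ uniform in $\epsilon$, obtained by testing the equation for $w^\epsilon$ against $w^\epsilon$ itself and exploiting the coercivity and Lipschitz-type bounds \eqref{alphaStrichBeschraenkt}, \eqref{betaStrich_maxDefinition} together with the compact embedding \eqref{sobolevEmbedding} to control the boundary terms; one then extracts a weakly convergent subsequence and identifies the limit via the well-posedness (uniqueness) of the linear problem \eqref{gateauxPDE1}--\eqref{gateauxPDE4}. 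I would either carry this out via the same pseudomonotone/Galerkin machinery used for Theorem \ref{mainTheorem_Existence_Uniqueness} applied to the linear operator $v\mapsto -(\alpha'(u)v)_{xx}$ with the Robin-type boundary data, noting that for fixed $u\in\mathcal{U}$ (hence $\alpha'(u)$ bounded and bounded below by \eqref{alphaStrichBeschraenkt}) this linear parabolic problem has a unique strong solution, or else simply note that the lemma as stated only asserts the \emph{form} of the linearization and defer the full differentiability justification. Given the paper's style, I expect the cleanest exposition is: state the formal computation as above, invoke Lipschitz continuity of $S$ (which can be extracted from the energy estimate) to guarantee the difference quotients are bounded, and conclude by uniqueness of the linear IBVP that the limit $w$ solves \eqref{gateauxPDE1}--\eqref{gateauxPDE4}.
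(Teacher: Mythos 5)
Your proposal is correct and follows essentially the same route as the paper: the paper's own proof simply writes down the perturbed IBVP for $u_{\mathbf{h}}=S(\pmb{\beta}+\epsilon\mathbf{h})$, subtracts the original IBVP, divides by $\epsilon$ and passes formally to the limit $\epsilon\to 0$, which is exactly your difference-quotient argument. Your term-by-term Taylor expansions and the discussion of how one would rigorously justify the limit go beyond what the paper records, but they do not change the approach.
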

\begin{proof}
The initial boundary value problem \eqref{gateauxPDE1} - \eqref{gateauxPDE4} is obtained in the following way. Let $\epsilon>0$. The perturbation of the parameter $\pmb{\beta}\in B$ and the corresponding heat fluxes $\beta_0(u,\pmb{\beta})$ and $\beta_L(u,\pmb{\beta})$ by $\epsilon\mathbf{h}\in \mathbb{R}^{2n}$ leads to the heat fluxes $\beta_0(u_{\mathbf{h}},\pmb{\beta}+\epsilon\mathbf{h})$ and $\beta_L(u_{\mathbf{h}},\pmb{\beta}+\epsilon\mathbf{h})$, where $u_{\mathbf{h}}:=S(\pmb{\beta}+\epsilon\mathbf{h})$ is the solution of the perturbed initial boundary value problem
\begin{align}
\label{perturbedIBVP1}
  u_{\mathbf{h},t} &= (\alpha'(u_{\mathbf{h}})u_{\mathbf{h},x})_x,  &t \in I,\  x\in \Omega,\\
    \alpha'(u_{\mathbf{h}})u_{\mathbf{h},x} &= \beta_0(u_{\mathbf{h}},\pmb{\beta}+\epsilon\mathbf{h}), &t \in I,\  x=0,\\
   -\alpha'(u_{\mathbf{h}})u_{\mathbf{h},x} &= \beta_L(u_{\mathbf{h}},\pmb{\beta}+\epsilon\mathbf{h}), &t \in I,\  x=L,\\ \label{perturbedIBVP4}
    u_{\mathbf{h}}&=u_0, &t=0, x\in \bar{\Omega}.
\end{align}
Subtracting the original IBVP \eqref{IBVP1} - \eqref{IBVP4} from \eqref{perturbedIBVP1} - \eqref{perturbedIBVP4}, dividing the result by $\epsilon>0$ and considering the limit $\epsilon \to 0$ yield the assertion. 
\end{proof}

\begin{remark}
Note that, in comparison to \eqref{IBVP1}, the governing differential equation \eqref{gateauxPDE1} is linear with respect to its solution. Hence, the solvability theory (existence and uniqueness of solutions) is drastically simplified. Basic results on this matter, concerning initial boundary value problems with variable coefficients in H\"older function classes, were also formulated by Ladyzhenskaya et al., see \cite[Chapter 5]{ladyzenskaja1968linear} and especially Theorem 5.3 therein. We omit the detailed proof here, but point out that the assumptions of Theorem \ref{theoremClassicalSolution} are sufficient to guarantee the existence of a unique solution $w=\tilde{S}_{\pmb{\beta}}\mathbf{h}\in H^{2+l,1+l/2}(\bar{Q})\in \mathcal{U}$ and, hence, a well-defined operator \eqref{gateaux1}.
\end{remark}

\begin{theorem} Let $\pmb{\beta}\in B$ and $u:=S(\pmb{\beta})$. We assume that for any $v \in \mathcal{U}^*$ a unique solution $\varphi\in Z$, where $Z$ is a convenient function space, of the adjoint problem
\begin{align}
    \label{adjointProblem1}
    \varphi _t &=-\alpha'(u)\varphi_{xx}-v,  &t \in I,\  x\in \Omega,\\ \label{adjointProblem2}
    \alpha'(u)\varphi_x&=\beta_0'(u,\pmb{\beta})\varphi, &t \in I,\  x=0,\\ \label{adjointProblem3}
   -\alpha'(u)\varphi_x&=\beta_L'(u,\pmb{\beta})\varphi, &t \in I,\  x=L,\\ \label{adjointProblem4}
    \varphi &=0, &t=T, x\in \bar{\Omega},
\end{align}
exists. The adjoint operator \eqref{adjointOperatorOfLinearizedParameterToSolutionOperator}, or rather the adjoint state, can be computed explicitly by
\begin{align}
    \label{explicitComputationOfAdjointState}
    \tilde{S}^*_{\pmb{\beta}}v = \int_0^T \left(-\nabla\beta_0(u(t,0),\pmb{\beta})\ \varphi(t,0) -\nabla\beta_L(u(t,L),\pmb{\beta})\ \varphi(t,L)\right) \ \mathrm{d}t \ \ \in \mathbb{R}^{2n}.
\end{align}
\begin{proof}
For $\mathbf{h}\in \mathbb{R}^{2n}$ let $w:=\tilde{S}_{\pmb{\beta}}\mathbf{h}$. Regarding the inner product extension \eqref{innerproductExtensionOnU} the left hand side of \eqref{definitionAdjointOperator} is just
\begin{align}
    \label{firstStepOfAdjointProof}
    (w,v)_{L^2(Q)} = \int_I\int_{\Omega} w(t,x)v(t,x)\ \mathrm{d}x\ \mathrm{d}t.
\end{align}
Next we scrutinize the right hand side of \eqref{definitionAdjointOperator}. Multiplying \eqref{gateauxPDE1} by $\varphi\in Z$,  integrating over $I$ and $\Omega$ and using integration by parts two times leads to
\begin{align}
    \label{secondStepOfAdjointProof1}
    \int_I\int_{\Omega} w_t \varphi\ \mathrm{d}x\ \mathrm{d}t &= \int_I\int_{\Omega} (\alpha'(u)w)_{xx} \varphi\ \mathrm{d}x\ \mathrm{d}t\\
    &= \int_I\int_{\partial \Omega} (\alpha'(u)w)_x \varphi\ \mathrm{d}S\ \mathrm{d}t-\int_I\int_{\Omega} (\alpha'(u)w)_x \varphi_x\ \mathrm{d}x\ \mathrm{d}t \label{secondStepOfAdjointProof2}\\
     \label{secondStepOfAdjointProof3} &= \int_I\int_{\partial \Omega} (\alpha'(u)w)_x \varphi\ \mathrm{d}S\ \mathrm{d}t-\int_I\int_{\partial \Omega} (\alpha'(u)w) \varphi_x\ \mathrm{d}S\ \mathrm{d}t+ \int_I\int_{\Omega} (\alpha'(u)w) \varphi_{xx}\ \mathrm{d}x\ \mathrm{d}t \\
      &= \int_I\int_{\partial \Omega} (\alpha'(u)w)_x \varphi\ \mathrm{d}S\ \mathrm{d}t+ \int_I\int_{\Omega} (\alpha'(u)w) \varphi_{xx}\ \mathrm{d}x\ \mathrm{d}t
     \label{secondStepOfAdjointProof4}\\&\ -\int_I \left[(\alpha'(u)w) \varphi_x\right]_{x=L}-\left[(\alpha'(u)w) \varphi_x\right]_{x=0}\ \ \mathrm{d}t
\end{align}
By inserting the boundary conditions \eqref{gateauxPDE2} - \eqref{gateauxPDE3} into the first integral of \eqref{secondStepOfAdjointProof4}, we obtain
\begin{align}
    \label{thirdStepOfAdjointProof1}
    \int_I\int_{\Omega} w_t \varphi\ \mathrm{d}x\ \mathrm{d}t =& \int_I \ \left(-\beta_L'(u(t,L),\pmb{\beta})\ w(t,L)-\nabla  \beta_L(u(t,L),\pmb{\beta})\cdot \mathbf{h}\right)\ \varphi(t,L)\ \mathrm{d}t\\&+\int_I\left(-\beta_0'(u(t,0),\pmb{\beta})\ w(t,0)-\nabla  \beta_0(u(t,0),\pmb{\beta})\cdot \mathbf{h}\right)\ \varphi(t,0)\ \mathrm{d}t+\int_I\int_{\Omega} (\alpha'(u)w) \varphi_{xx}\ \mathrm{d}x\ \mathrm{d}t. \label{thirdStepOfAdjointProof2} \\ \label{thirdStepOfAdjointProof3}
    &-\int_I \left[(\alpha'(u)w) \varphi_x\right]_{x=L}-\left[(\alpha'(u)w) \varphi_x\right]_{x=0}\ \ \mathrm{d}t.
\end{align}
Regarding the time variable, we can partially integrate the left hand side of \eqref{thirdStepOfAdjointProof1} and use \eqref{gateauxPDE4}, which yields
\begin{align}
    \label{fourthSteopOfAdjointProof1}
    \int_I\int_{\Omega} w_t \varphi\ \mathrm{d}x\ \mathrm{d}t =& \int_{\Omega} w(T,x) \varphi(T,x)\ \mathrm{d}x-\int_I\int_{\Omega} w \varphi_t\ \mathrm{d}x\ \mathrm{d}t.
\end{align}
Using the last equality in \eqref{thirdStepOfAdjointProof1} and re-arranging the integrals, we arrive at
\begin{align}
    \label{fifthStepOfAdjointProof1}
    \int_I\int_{\Omega} w \ \left\{-\varphi_t-\alpha'(u)\varphi_{xx} \right\}\ \mathrm{d}x\ \mathrm{d}t\\ 
    \label{fifthStepOfAdjointProof2}
    +\int_I w(t,0)\ \left\{\left[-\alpha'(u) \varphi_x+\beta_0'(u,\pmb{\beta})\varphi\right]_{x=0}\right\}\ \mathrm{d}t\\ 
    \label{fifthStepOfAdjointProof3}
    +\int_I w(t,L)\ \left\{\left[\alpha'(u) \varphi_x+\beta_L'(u,\pmb{\beta})\varphi\right]_{x=L}\right\}\ \mathrm{d}t\\
    \label{fifthStepOfAdjointProof4}
    +\int_{\Omega} w(T,x) \varphi(T,x)\ \mathrm{d}x\\
    \label{fifthStepOfAdjointProof5}
    = \int_I \left(-\nabla\beta_0(u(t,0),\pmb{\beta})\ \varphi(t,0) -\nabla\beta_L(u(t,L),\pmb{\beta})\ \varphi(t,L)\right) \cdot \mathbf{h}\ \mathrm{d}t\\
    \label{fifthSteopOfAdjontProof6}
    \overset{\eqref{explicitComputationOfAdjointState}}{=} (\mathbf{h},\tilde{S}_{\pmb{\beta}}^*v)_2,
\end{align}
which is the right hand side of \eqref{definitionAdjointOperator}. If $\varphi$ solves the adjoint problem \eqref{adjointProblem1} - \eqref{adjointProblem4} the operator \eqref{explicitComputationOfAdjointState} fulfills \eqref{definitionAdjointOperator}, since its left hand side is given by \eqref{firstStepOfAdjointProof}.
\end{proof}
\end{theorem}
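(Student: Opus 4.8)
The plan is to verify the defining identity \eqref{definitionAdjointOperator} of the adjoint operator directly, by the standard Lagrangian (adjoint-equation) calculation. Fix $\mathbf{h}\in\mathbb{R}^{2n}$ with $(\pmb{\beta}+\epsilon\mathbf{h})\in B$ and set $w:=\tilde{S}_{\pmb{\beta}}\mathbf{h}$, which by the preceding lemma solves the linearized problem \eqref{gateauxPDE1} -- \eqref{gateauxPDE4}. Using the Gelfand-triple identification \eqref{innerproductExtensionOnU}, the left-hand side of \eqref{definitionAdjointOperator} is simply $\langle w,v\rangle_{\mathcal{U}\times\mathcal{U}^*}=\int_I\int_\Omega w\,v\,\mathrm{d}x\,\mathrm{d}t$, so the task reduces to rewriting this double integral as a boundary expression that is linear in $\mathbf{h}$.

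First I would multiply the linearized heat equation $w_t=(\alpha'(u)w)_{xx}$ by a test function $\varphi$ (to be identified as the adjoint state), integrate over $I\times\Omega$, and integrate by parts twice in $x$. This moves both spatial derivatives onto $\varphi$, leaving the interior term $\int_I\int_\Omega \alpha'(u)\,w\,\varphi_{xx}$ together with boundary contributions at $x=0$ and $x=L$ of the forms $(\alpha'(u)w)_x\varphi$ and $(\alpha'(u)w)\varphi_x$. Next I would substitute the linearized Neumann conditions \eqref{gateauxPDE2} -- \eqref{gateauxPDE3} into the terms carrying $(\alpha'(u)w)_x$; this produces the terms $\beta_0'(u,\pmb{\beta})w\,\varphi$, $\beta_L'(u,\pmb{\beta})w\,\varphi$ and, crucially, the two terms $\nabla\beta_0(u(\cdot,0),\pmb{\beta})\cdot\mathbf{h}\,\varphi(\cdot,0)$ and $\nabla\beta_L(u(\cdot,L),\pmb{\beta})\cdot\mathbf{h}\,\varphi(\cdot,L)$ that will ultimately carry the dependence on $\mathbf{h}$. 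I would then integrate by parts in time in $\int_I\int_\Omega w_t\varphi$, picking up the terminal term $\int_\Omega w(T,x)\varphi(T,x)\,\mathrm{d}x$; the initial term drops out because of the homogeneous initial condition \eqref{gateauxPDE4}.

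Finally I would collect the coefficients of $w$ in the interior, of $w(\cdot,0)$ and $w(\cdot,L)$ on the boundary, and of $w(T,\cdot)$ at the final time. Requiring each of these factors to vanish is precisely the adjoint problem \eqref{adjointProblem1} -- \eqref{adjointProblem4}: the interior factor gives $-\varphi_t-\alpha'(u)\varphi_{xx}=v$, the boundary factors give $-\alpha'(u)\varphi_x+\beta_0'(u,\pmb{\beta})\varphi=0$ at $x=0$ and $\alpha'(u)\varphi_x+\beta_L'(u,\pmb{\beta})\varphi=0$ at $x=L$, and the terminal factor gives $\varphi(T,\cdot)=0$. With $\varphi$ taken to be the (by hypothesis unique) solution of this adjoint problem, everything involving $w$ disappears and one is left with $\int_I\int_\Omega w\,v\,\mathrm{d}x\,\mathrm{d}t$ on one side and $\int_I\bigl(-\nabla\beta_0(u(t,0),\pmb{\beta})\varphi(t,0)-\nabla\beta_L(u(t,L),\pmb{\beta})\varphi(t,L)\bigr)\cdot\mathbf{h}\,\mathrm{d}t$ on the other. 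Comparing with \eqref{definitionAdjointOperator} identifies the bracketed integral as $\tilde{S}^*_{\pmb{\beta}}v$, which is exactly \eqref{explicitComputationOfAdjointState}.

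The main obstacle is not algebraic but functional-analytic: all of the above integrations by parts (twice in space, once in time) and the boundary point-evaluations of $w$, $\varphi$, $\varphi_x$ must be justified, which requires $w$ and $\varphi$ to lie in sufficiently regular spaces. For $w$ this is supplied by the remark following the linearization lemma (solvability of \eqref{gateauxPDE1} -- \eqref{gateauxPDE4} in $H^{2+l,1+l/2}(\bar{Q})$ under the hypotheses of Theorem \ref{theoremClassicalSolution}); for $\varphi$ the theorem simply postulates existence and uniqueness in ``a convenient space $Z$'', and the honest version of the argument would have to pin down $Z$ — after the time reversal $\tau=T-t$ the adjoint problem becomes a forward linear parabolic equation with bounded, H\"older-regular coefficient $\alpha'(u)$ and Robin boundary conditions, for which Ladyzhenskaya-type results again apply provided the source $v\in\mathcal{U}^*\cong L^2(Q)$ is admitted in the chosen scale. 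Granting that, the remainder is the careful bookkeeping of boundary terms sketched above.
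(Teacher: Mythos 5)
Your proposal is correct and follows essentially the same route as the paper's proof: multiply the linearized equation by $\varphi$, integrate by parts twice in space and once in time, insert the linearized boundary conditions \eqref{gateauxPDE2}--\eqref{gateauxPDE3} and the homogeneous initial condition \eqref{gateauxPDE4}, and then let $\varphi$ solve the adjoint problem so that all terms involving $w$ vanish, leaving exactly \eqref{explicitComputationOfAdjointState}. Your closing remarks on regularity and on fixing the space $Z$ via time reversal correspond to the remark the paper places immediately after the theorem, so nothing essential is missing.
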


\begin{remark}
Note that we assumed that a unique solution $\varphi \in Z$ of the adjoint problem \eqref{adjointProblem1} - \eqref{adjointProblem4} exists without specifying the space $Z.$ In fact, we face a difficult issue here, because of the termination condition \eqref{adjointProblem4}. Since (parabolic) evolution equations are exponentially smoothing the initial condition in forward time, the calculation of the backward evolution equation is extremely ill-posed. 
There are regularization methods to solve such problems, e.g. the general method of quasireversibility, cf. \cite{lattes1967methode}, \cite{showalter1974final}. However, we do not want to deal with this topic in this article. Since the final condition \eqref{adjointProblem4} is homogeneous, we assume that the problem is well-posed and are satisfied with the time transformation $t=T-t$, yielding an initial boundary value problem, similar to \eqref{gateauxPDE1} - \eqref{gateauxPDE4}, with a unique solution $\varphi\in Z$. Additional smoothness conditions on $v\in \mathcal{U}^*$ lead
to the solution space $Z=H^{2+l,1+l/2}(\bar{Q})$, see Theorem \ref{theoremClassicalSolution}. For application purposes this would be possible by a modification of the observation operator
using $L^2-$ integrals to approximate point evaluations\setcounter{footnote}{0}\footnote{This can be achieved by so-called \textit{mollifiers}, see for example \cite{louis1996approximate}.} instead of point evaluations. Milder conditions lead to
$Z=W^{1,2,2}(I;V,V^*)\subset C (I;H)$ according to Theorem \ref{mainTheorem_Existence_Uniqueness}. Since the calculation of the adjoint state is only used for numerical purposes, all we need is a stable approximate solution achieved by some finite-difference scheme. 
\end{remark}

\section{Numerical experiments}\label{sec:implementationAndNumericalResults}

In this section we aim to solve the IHCP by implementing the minimization problem \eqref{minimizationTask} for the objective functional \eqref{objFunctional} using the Projected Quasi-Newton (PQN) method. Adapted to our problem, a PQN step is given by
\begin{align}
    \label{PQNstep}
    \pmb{\beta}^{(k+1)}=\mathcal{P}_B\left(\pmb{\beta}^{(k)}+\lambda_k \mathbf{p}_k\right),
\end{align}
where $\mathcal{P}_B:\mathbb{R}^{2n}\to B=[0,\beta_{max}]^{2n}$ is the metric projection enforcing the box-constraint. The step size $\lambda_k$ and the search direction $\mathbf{p}_k$ have to be determined accordingly.
As a stopping rule we use the discrepancy principle, which yields a finite stopping index $k_*$ when 
\begin{align}
    \label{diskrepanzPrinzip}
    \frac{f(\pmb{\beta}^{(k_*)})}{\|u^\delta\|_Y^2}\leq \rho\delta<\frac{f(\pmb{\beta}^{(k)})}{\|u^\delta\|_Y^2}
\end{align}
for all $k<k_*$ and fixed $\rho>1$.

We test the proposed method by means of synthetic measurement data with known noise level $\delta>0.$

\subsection{The PQN method}

The Projected Quasi-Newton method was proposed by Kim et al. (cf. \cite{kim2010tackling}), in order to solve box-constrained
optimization problems of the form \eqref{minimizationTask}.
There, the authors assume that the objective functional $f$ is twice continuously differentiable and strictly convex.
While Newton methods usually try to find the extrema of an objective functional by calculating the Hessian, Quasi-Newton methods try to approximate the Hessian via the knowledge of the gradient only, since the calculation of the Hessian is typically too expensive to compute at every iteration. The approximation of the Hessian matrix, or rather of its inverse, is updated at every iteration and is used to scale the gradient, yielding an approximated Newton step as a potential search direction. Probably the most famous updating procedure is given by the Broyden-Fletcher-Goldfarb-Shanno (BFGS) algorithm used in the associated Quasi-Newton BFGS method for unconstrained minimization problems.

Given the current approximation of the Hessian $H^{k}$, and vectors $\mathbf{s}_k$ and $\mathbf{g}_k$ by
\begin{align}
    \label{s_kAndy_k}
    \mathbf{s}_k=\pmb{\beta}^{(k+1)}-\pmb{\beta}^{(k)}, \text{ and } \mathbf{g}_k=\nabla f(\pmb{\beta}^{(k+1)})-\nabla f(\pmb{\beta}^{(k)}),
\end{align}
the Hessian updates are computed via the BFGS formula
\begin{align}
    \label{bfgsUpdate}
    H^{k+1} = H^{k}+\frac{\mathbf{g}_k \mathbf{g}_k^T}{\mathbf{g}_k^T \mathbf{s}_k}-\frac{H^{k}\mathbf{s}_k \mathbf{s}_k^T (H^k)^T}{\mathbf{s}_k^T H^{k}\mathbf{s}_k}.
\end{align}

Actually, since the search direction $\mathbf{p}_k$ is obtained by solving
\begin{align}
    \label{solveForSearchDirection}
    H^{k}\mathbf{p}_k=-\nabla f(\pmb{\beta}^{(k)}),
\end{align}
the calculation of the inverse of $H^{k}$ can be avoided by directly updating the inverse in each iteration step. This is achieved by applying the Sherman-Morrison formula which yields the update rule
\begin{align}\label{shermanMorrisonFormula}
    S^{k+1}=S^{k}+{\frac {(\mathbf {s} _{k}^{T }\mathbf {g} _{k}+\mathbf {g} _{k}^{T }S^{k}\mathbf {g} _{k})(\mathbf {s} _{k}\mathbf {s} _{k}^{T })}{(\mathbf {s} _{k}^{T }\mathbf {g} _{k})^{2}}}-{\frac {S^{k}\mathbf {g} _{k}\mathbf {s} _{k}^{T }+\mathbf {s} _{k}\mathbf {g} _{k}^{T }S^{k}}{\mathbf {s} _{k}^{T }\mathbf {g} _{k}}},
\end{align}
where $S^{k}$ is the inverse of $H^{k},$ resulting in the search direction
\begin{align}
    \label{searchDirectionForUnconstrained}
    \mathbf{p}_k = -S^k \nabla f (\pmb{\beta}^{(k)})
\end{align}
for the Quasi-Newton BFGS method for unconstrained problems. The method is known to be very efficient  and robust for smooth objective functionals. However, in \cite{curtis2015quasi} and \cite{xie2017convergence}, the authors point out that the BFGS method has a very good performance even for non-smooth functions, especially when combined with the Armijo-Wolfe line search.

The PQN method can be seen as an extension of the BFGS method to box-constrained optimization problems. The special feature here is the selection of the variables that can still be involved in the optimization process. Instead of \eqref{searchDirectionForUnconstrained} one computes the search direction
\begin{align}
    \label{searchDirectionForConstrained}
    \mathbf{p}_k=-\hat{S}^{k}\nabla f(\pmb{\beta}^{(k)}),
\end{align}
where
\begin{align}
    \label{Shat}
    \hat{S}^{k}=\begin{cases}S^k_{ij},\ \ \ \ &\text{if } i,j \notin \ I^k_1\cup I^k_2,\\ 0, \ \ \ \ &\text{otherwise}\end{cases}
\end{align}
keeps track of the parameter variables that are fixed or free. The fixed variables are collected via the index set $I^k_1\cup I^k_2$, to be defined in the following. The index set
$$I_1^k=\left\{i: \beta_i^{(k)}=0\wedge \left[\nabla f(\pmb{\beta}^{(k)})\right]_i>0,\ \text{ or } \beta_i^{(k)}=\beta_{max} \wedge \left[\nabla f(\pmb{\beta}^{(k)})\right]_i<0\right\}$$
represents fixed variables on the boundaries of the box $B$, so-called active variables, which are useless for further iterations since they do not decrease the function value any further. Thus, for active variables not contained in $I_1^k$ the next iteration for, e.g., the first case $\beta_i^{(k)}=0$ might lead to $\beta_i^{(k+1)}>0$ and 
\begin{align}
\label{descentCondition}
    f(\pmb{\beta}^{(k+1)})<f(\pmb{\beta}^{(k)}).
\end{align} 
Let
$$\bar{S}^{k}=\begin{cases}S^k_{ij},\ \ \ \ &\text{if } i,j \notin I^k_1,\\ 0, \ \ \ \ &\text{otherwise}.\end{cases}$$
Since the PQN method is a gradient-scaling method, the descent condition
\eqref{descentCondition} still might not be satisfied.\setcounter{footnote}{0}\footnote{This is possible, because the descent direction component is not $\left[\nabla f(\pmb{\beta}^{(k)})\right]_i$ but $\left[\bar{S}^k\nabla f(\pmb{\beta}^{(k)}\right]_i$.}
For that reason, it is important to handle active variables left by $I^k_1$, which still need to be fixed. This is done with the index set
$$I_2^k=\left\{i: \beta_i^{(k)}=0\wedge \left[\bar{S}^k \nabla f(\pmb{\beta}^{(k)})\right]_i>0,\ \text{ or } \beta_i^{(k)}=\beta_{max} \wedge \left[\bar{S}^k \nabla f(\pmb{\beta}^{(k)})\right]_i<0\right\}.$$
 For further details we refer the interested reader to \cite{kim2010tackling}.

Given the search direction \eqref{searchDirectionForConstrained}, in order to compute a step size $\lambda_k$ for the PQN step \eqref{PQNstep}, a line search algorithm is needed. In this article, we implement the backtracking line search algorithm based on the Armijo condition, where a step size $\lambda_k>0$ is accepted when
\begin{align}
\label{armijoCond}
    f(\pmb{\beta}^{(k)})-f\left(\mathcal{P}_B(\pmb{\beta}^{(k)}+\lambda_k \mathbf{p}_k)\right) \geq -c\lambda_k \nabla f(\pmb{\beta}^{(k)})^T\mathbf{p}_k
\end{align}
is fulfilled for some constant $c\in(0,1).$ If $\lambda_k$ does not satisfy \eqref{armijoCond}, the step size is reduced repeatedly to $\lambda_k:=\tau \lambda_k$ for a $\tau\in(0,1)$. We use $c=\tau=0.5$, as it was proposed in the original work by Armijo in \cite{armijo1966minimization}.

To start with the PQN method, we choose $S^{0}=I$, i.e. the first iteration corresponds to the classical gradient descent method. In the further iteration steps the method builds up curvature information via \eqref{shermanMorrisonFormula}, which accelerates the minimization process significantly since we involve second derivative knowledge.

\subsection{The experimental setting}

To fully describe the objective functional \eqref{objFunctional} for numerical investigations, we need to 
\begin{itemize}
    \item[(1)] fix the setting of the IBVP \eqref{IBVP1} - \eqref{IBVP4}, and
    \item[(2)] generate noisy measurement data $u^{\delta}.$
\end{itemize} 

First of all, we do not use the non-dimensionalized version of the IBVP. To remind the Accelerated Cooling (ACC) process of hot-rolled heavy plates made of steel we try to keep the experiment close to real values, but still neglect most of the SI units later on. To this end we choose a plate with a thickness of $50\  \si{mm}$, i.e. $L=0.05\ [\si{m}]$ leading to $\Omega=[0,0.05]$. For the time interval $I=[0,T]$ we set $T=30\ [\si{s}]$, which is a typical total cooling time in the underlying ACC process. By defining temperature dependent material parameters $k$ and $C$ in \eqref{oldPde}, we transform temperature to enthalpy by \eqref{enthalpyDef}. Hence, by assuming a homogeneous initial temperature of approximately $1100 \si{K}$ we choose $u(0,x) = u_0(x) = 5.5\times 10^9$. (We note that the understanding of the conversion is not relevant at this point.) We also use this value for the upper bound of enthalpy $u_{max}$. Furthermore, we compute the enthalpy-dependent function $\alpha'(u)\in \mathcal{C}^1([0,u_{max}])$, cf. \eqref{alphaStrichGleichTLF}, which is plotted in Figure \ref{fig:alphaPrime}. 
\begin{center}
\def\svgwidth{400pt}
\begingroup%
  \makeatletter%
  \providecommand\color[2][]{%
    \errmessage{(Inkscape) Color is used for the text in Inkscape, but the package 'color.sty' is not loaded}%
    \renewcommand\color[2][]{}%
  }%
  \providecommand\transparent[1]{%
    \errmessage{(Inkscape) Transparency is used (non-zero) for the text in Inkscape, but the package 'transparent.sty' is not loaded}%
    \renewcommand\transparent[1]{}%
  }%
  \providecommand\rotatebox[2]{#2}%
  \newcommand*\fsize{\dimexpr\f@size pt\relax}%
  \newcommand*\lineheight[1]{\fontsize{\fsize}{#1\fsize}\selectfont}%
  \ifx\svgwidth\undefined%
    \setlength{\unitlength}{1440bp}%
    \ifx\svgscale\undefined%
      \relax%
    \else%
      \setlength{\unitlength}{\unitlength * \real{\svgscale}}%
    \fi%
  \else%
    \setlength{\unitlength}{\svgwidth}%
  \fi%
  \global\let\svgwidth\undefined%
  \global\let\svgscale\undefined%
  \makeatother%
  \begin{picture}(1,0.48072917)%
    \lineheight{1}%
    \setlength\tabcolsep{0pt}%
    \put(0,0){\includegraphics[width=\unitlength,page=1]{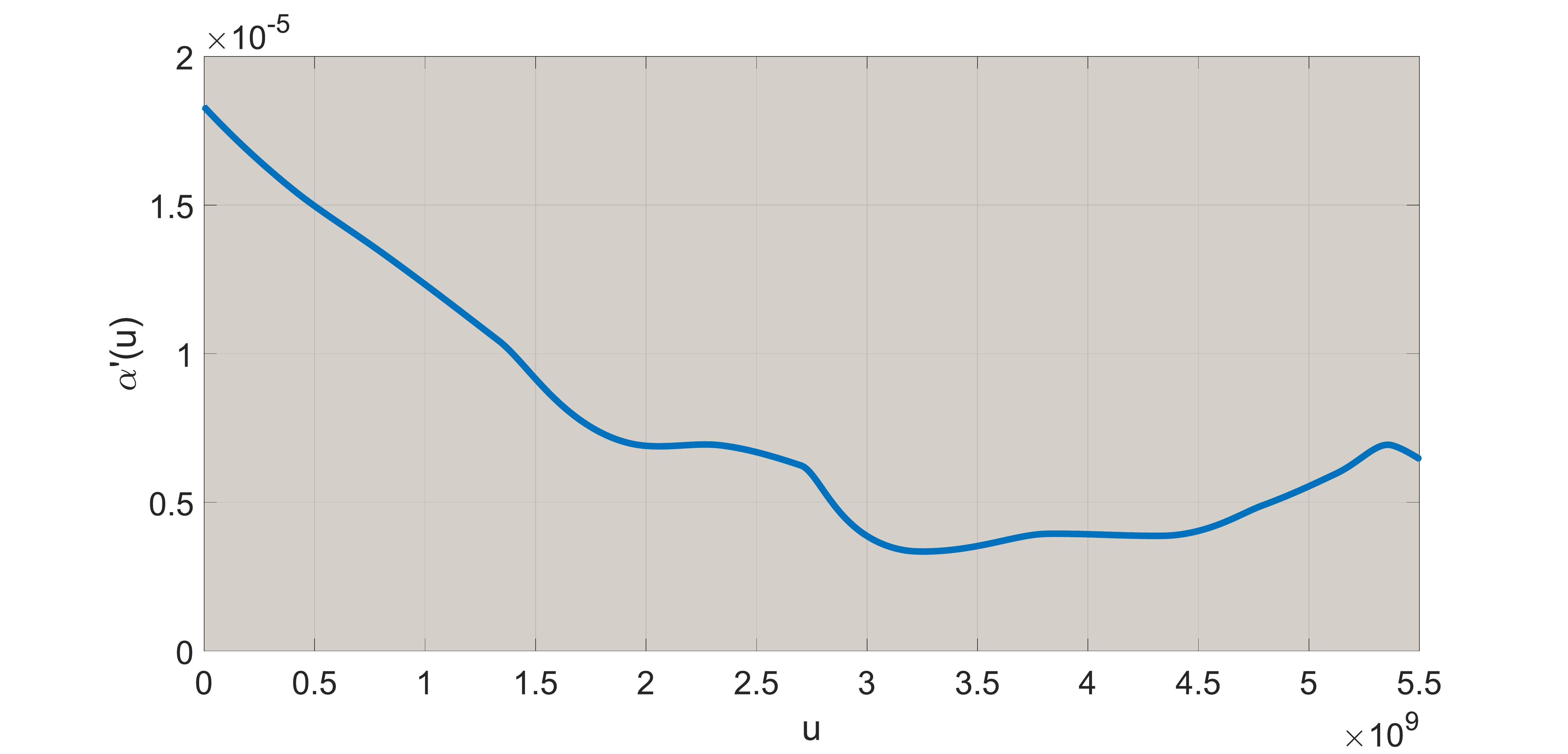}}%
  \end{picture}%
\endgroup%

\captionof{figure}{Thermal diffusivity  $\alpha'(u)$ in terms of enthalpy $u$}
\label{fig:alphaPrime}
\end{center}

So far, the governing evolution equation \eqref{IBVP1} and the initial condition $\eqref{IBVP4}$ are fixed.

We generate measurement data by putting the exact heat fluxes $\beta_0^{ex}(u), \beta_L^{ex}(u)\in \mathcal{C}^1([0,u_{max})]$ (cf. Figure \ref{fig:beta0Lexact}) in the boundary conditions \eqref{IBVP2} - \eqref{IBVP3} and numerically computing the associated solution $u^{ex}\in \mathcal{U}$ of the IBVP,

\begin{center}
\def\svgwidth{\textwidth}
\begingroup%
  \makeatletter%
  \providecommand\color[2][]{%
    \errmessage{(Inkscape) Color is used for the text in Inkscape, but the package 'color.sty' is not loaded}%
    \renewcommand\color[2][]{}%
  }%
  \providecommand\transparent[1]{%
    \errmessage{(Inkscape) Transparency is used (non-zero) for the text in Inkscape, but the package 'transparent.sty' is not loaded}%
    \renewcommand\transparent[1]{}%
  }%
  \providecommand\rotatebox[2]{#2}%
  \newcommand*\fsize{\dimexpr\f@size pt\relax}%
  \newcommand*\lineheight[1]{\fontsize{\fsize}{#1\fsize}\selectfont}%
  \ifx\svgwidth\undefined%
    \setlength{\unitlength}{1440bp}%
    \ifx\svgscale\undefined%
      \relax%
    \else%
      \setlength{\unitlength}{\unitlength * \real{\svgscale}}%
    \fi%
  \else%
    \setlength{\unitlength}{\svgwidth}%
  \fi%
  \global\let\svgwidth\undefined%
  \global\let\svgscale\undefined%
  \makeatother%
  \begin{picture}(1,0.46979167)%
    \lineheight{1}%
    \setlength\tabcolsep{0pt}%
    \put(0,0){\includegraphics[width=\unitlength,page=1]{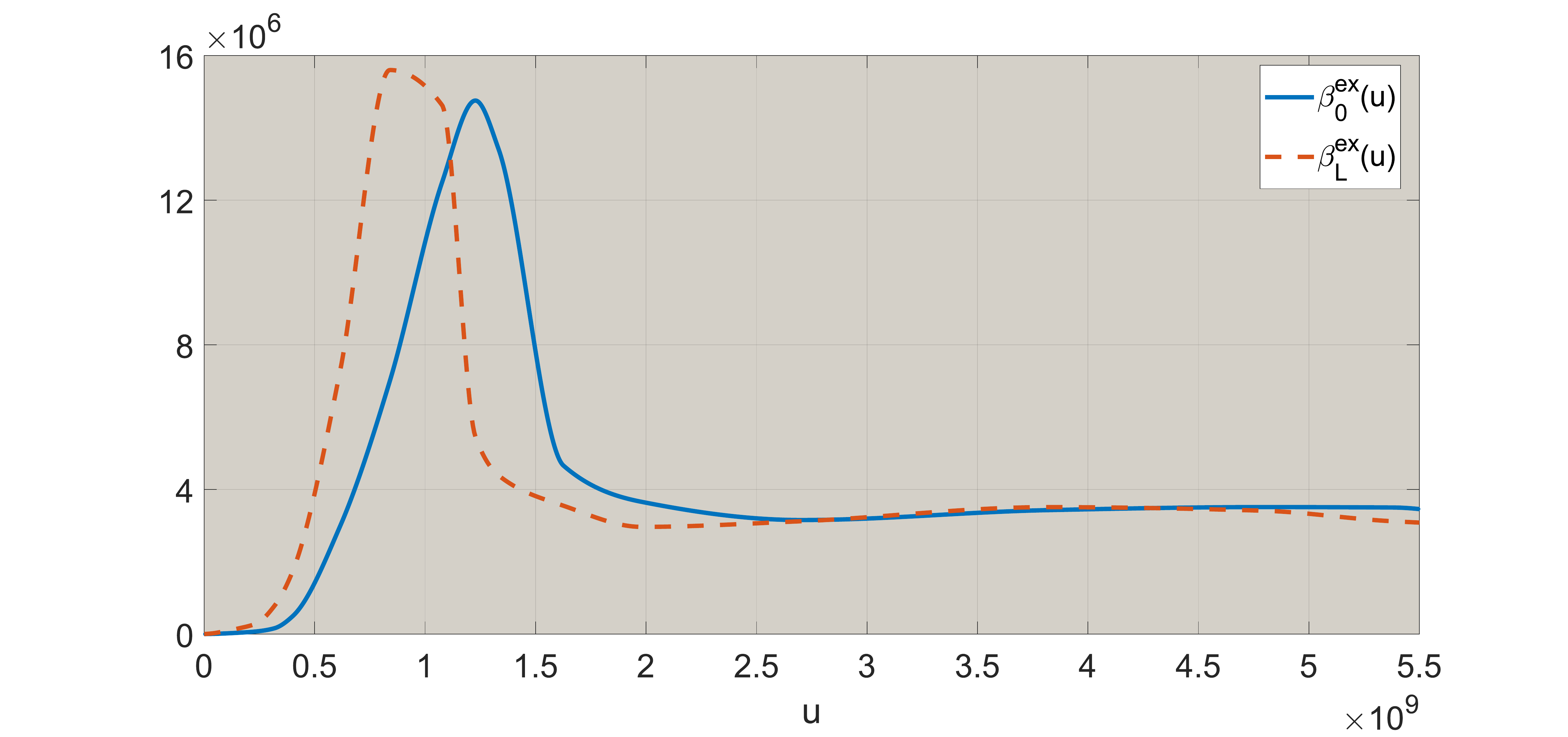}}%
  \end{picture}%
\endgroup%

\captionof{figure}{Exact heat fluxes to simulate $u^{\delta}$}
\label{fig:beta0Lexact}
\end{center}

which we obtain on a finite space and time grid by a stable Finite Difference Method (FDM), see Figure \ref{fig:uexact}. 

\begin{center}
\def\svgwidth{\textwidth}
\begingroup%
  \makeatletter%
  \providecommand\color[2][]{%
    \errmessage{(Inkscape) Color is used for the text in Inkscape, but the package 'color.sty' is not loaded}%
    \renewcommand\color[2][]{}%
  }%
  \providecommand\transparent[1]{%
    \errmessage{(Inkscape) Transparency is used (non-zero) for the text in Inkscape, but the package 'transparent.sty' is not loaded}%
    \renewcommand\transparent[1]{}%
  }%
  \providecommand\rotatebox[2]{#2}%
  \newcommand*\fsize{\dimexpr\f@size pt\relax}%
  \newcommand*\lineheight[1]{\fontsize{\fsize}{#1\fsize}\selectfont}%
  \ifx\svgwidth\undefined%
    \setlength{\unitlength}{1440.0719bp}%
    \ifx\svgscale\undefined%
      \relax%
    \else%
      \setlength{\unitlength}{\unitlength * \real{\svgscale}}%
    \fi%
  \else%
    \setlength{\unitlength}{\svgwidth}%
  \fi%
  \global\let\svgwidth\undefined%
  \global\let\svgscale\undefined%
  \makeatother%
  \begin{picture}(1,0.46979166)%
    \lineheight{1}%
    \setlength\tabcolsep{0pt}%
    \put(0,0){\includegraphics[width=\unitlength,page=1]{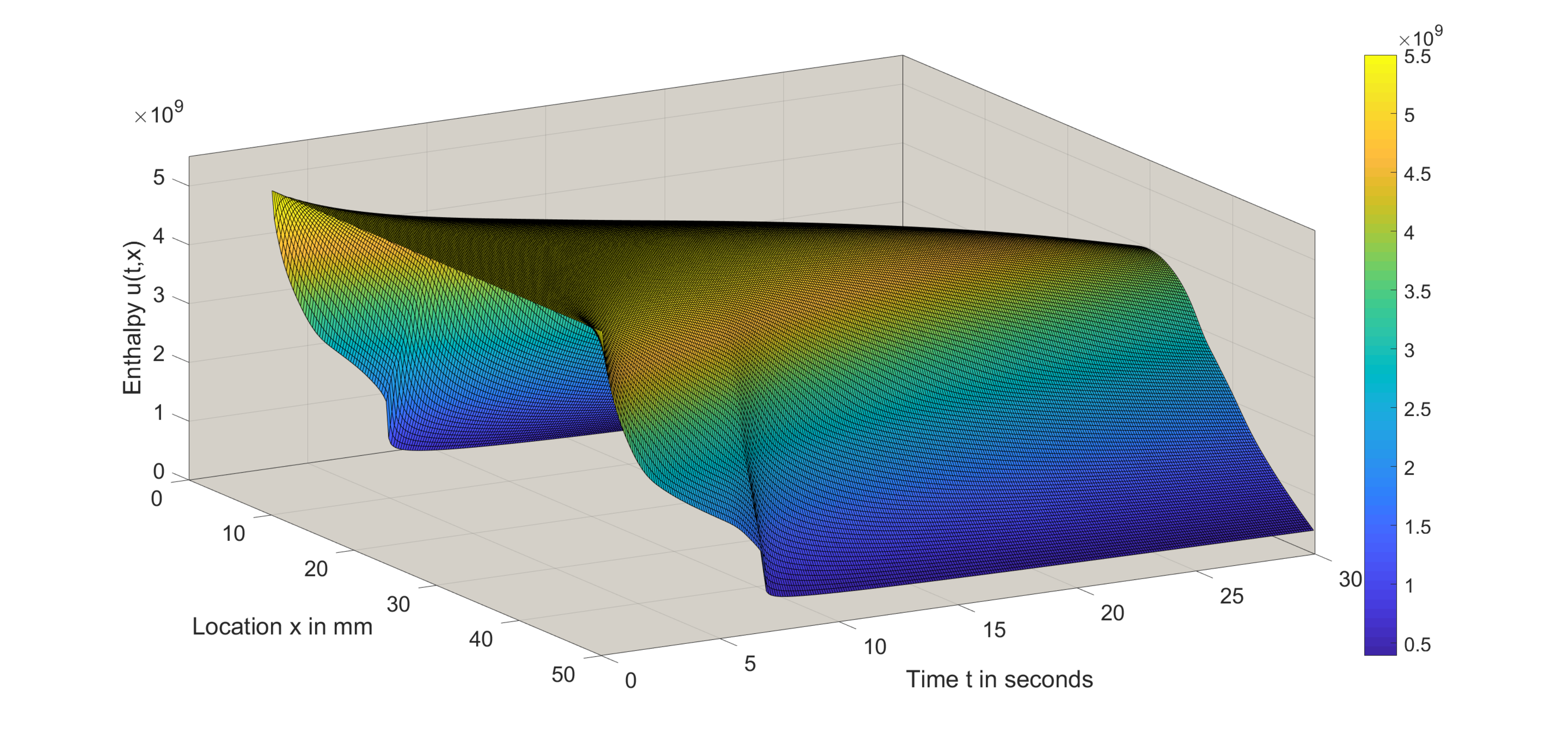}}%
  \end{picture}%
\endgroup%

\captionof{figure}{Approximate solution of $u^{ex}$}
\label{fig:uexact}
\end{center}

Note that the heat conduction inside of the domain $\Omega$ is solely driven by $\alpha'$, while the heat convection on the boundaries is driven by the heat fluxes $\beta_0^{ex}$ and $\beta_L^{ex}$, that carry the information of the Leidenfrost effect. Coming from high enthalpy values the starting of the peaks in Figure \ref{fig:beta0Lexact} simulates the steam layer collapse, leading to an increased heat flux.

Concerning the measuring procedure we model the insertion of $d=5$ thermocouples into the heavy plate at positions
\begin{align}
\label{thermocouplePositions}
    x \in (x_1=0.002,\  x_2=0.01,\  x_3=0.025,\  x_4=0.04,\  x_5=0.048)^T,
\end{align}
which is exactly at the core and $2\si{\mm}$ and $10\si{\mm}$ beneath both surfaces. During the cooling process, every thermocouple then records the enthalpy value every $0.1$ seconds, yielding $m=300$ time samples at
\begin{align}
\label{thermocoupleTimes}
    t\in (t_1=0.1, \ t_2=0.2,\  \dots,\  t_{300}=T=30)^T.
\end{align}
Hence, we obtain an observed enthalpy matrix $Qu^{ex} \in Y=\mathbb{R}^{5\times 300}$, cf. \eqref{specifiedObservationOperator}. Simulating measurement errors and assuming a sensor accuracy in the range of $\pm 0.5\  \si{K}$, we a add uniformly distributed random numbers in the interval $[-2,2]\times  10^6$
to each component of $Qu^{ex}$, leading to noisy data $u^{\delta}$. Calculating the noise level $\delta>0$ using the Frobenius-Norm via
\begin{align}
\label{noiseExecution}
    \frac{1}{2}\|Qu^{ex}-u^{\delta}\|_Y^2\leq \delta\|u^\delta\|^2_Y
\end{align}
yields different results for each program sweep, where the noise lavel not exceed $\delta:=6.65\times 10^{-8}$.

\subsection{Numerical results}

Provided $\alpha'$ and $u_0$ from the experimental setting, the forward operator \eqref{forwardOperatorInIntroduction} is fully described. We determine the heat flux parameter $\pmb{\beta}\in B$ and the associated heat fluxes/ PCHIP interpolants $\beta_0(u,\pmb{\beta}),\ \beta_L(u,\pmb{\beta})\in \mathcal{C}^1([0,u_{max}])$ using the data $u^{\delta}$ and compare the result with the exact heat fluxes used for the simulation above. 

For $\pi_n$ in \eqref{definitionPIn} we choose an equidistant partition of the interval $[0,u_{max}]$ for $n=20,$ i.e. $u_i=u_{max}\cdot\left(\frac{i-1}{n-1}\right)$ for $i=1,\dots,n$.
As for the box-constraint $B=[0,\beta_{max}]^{2n}$ we set $\beta_{max}=16\times 10^6.$

In order to start with a PQN step \eqref{PQNstep}, we initialize $\pmb{\beta}^{(0)}=\mathbf{0}\in\mathbb{R}^{2n}.$ Hence, assuming that we have no a-priori information about the exact solution of the IHCP. Given the parameter $\pmb{\beta}^{(k)}$ for $k=0,1,\dots,k_*$, we perform the process chain \eqref{processChain} of the parameter-to-solution operator $S$. Again, we obtain an enthalpy solution by solving the IBVP by a stable FDM, where we use another time and space grid in order to avoid an inverse crime (\cite{colton2019inverse}) of simulating and inverting the data using the same numerical approximations. At this point, we also mention that the exact heat fluxes in Figure \ref{fig:beta0Lexact} are piecewise cubic interpolants on a completely different partition than $\pi_n$. Otherwise, the inversion process would really invert exactly the approximate numerical forward model.

In the continuous setting, the solution $S(\pmb{\beta}^{(k)})$ would then be applied to the observation operator $\mathcal{Q}$ defined by \eqref{specifiedObservationOperator}, which is now specified by \eqref{thermocouplePositions} and \eqref{thermocoupleTimes}. Since we only have an approximate solution, we just interpolate the enthalpy values with respect to the time and space grid, if necessary. This way, we get an approximation of the enthalpy matrix $F(\pmb{\beta}^{(k)})$ which we can compare with the measurement data via the objective functional \eqref{objFunctional}.

To actually perform \eqref{PQNstep} at iteration $k$, we compute \eqref{gradientOfObjectiveFunctional} by\setcounter{footnote}{0}\footnote{This follows from \eqref{adjointOperatorOfFequalsSPrimeStarQStar} and \eqref{adjungierteSindGleich}.} \begin{align}
    \label{approximationOfTheGradient}
    \nabla f(\pmb{\beta}^{(k)}) = \tilde{S}_{\pmb{\beta}^{(k)}}^*\circ \mathcal{Q}^*\left(F(\pmb{\beta}^{(k)}-u^{\delta}\right),
\end{align} with $\mathcal{Q}^*$ and $\tilde{S}_{\pmb{\beta}^{(k)}}^*$ from \eqref{adjointOfObservationOperator} and \eqref{adjointOperatorOfLinearizedParameterToSolutionOperator}, respectively. The latter operator is computed by solving the adjoint problem \eqref{adjointProblem1} - \eqref{adjointProblem4} for $v=\mathcal{Q}^*\left(F(\pmb{\beta}^{(k)}-u^{\delta}\right)$ with a stable FDM, yielding a solution $\varphi$ which is used to compute the integral \eqref{explicitComputationOfAdjointState} by the trapezoidal summation rule. We note that the gradients $\nabla\beta_0$ and $\nabla\beta_L$ of the interpolants (with respect to $\pmb{\beta}^{(k)}$) can be computed analytically if the interpolation method is known. For the PCHIP interpolation method, we provide the necessary formulas in the appended Section \ref{pchipsGradientenberechnung}.

As mentioned, by choosing $S^0=I$ in \eqref{searchDirectionForUnconstrained}, the first PQN step corresponds to a gradient descent step. From $k=1$ on, the calculated gradients \eqref{approximationOfTheGradient} are scaled by the inverse Hessian approximations, which are updated in every iteration by \eqref{shermanMorrisonFormula}. Hence, we obtain the search direction \eqref{searchDirectionForConstrained} by keeping track of the free and fixed variables. Then the backtracking line search algorithm \eqref{armijoCond} is applied to find an appropriate step size $\lambda_k>0$ in order to finally compute the PQN step. 

This procedure is repeated until the discrepancy principle \eqref{diskrepanzPrinzip} is satisfied. Taking discretization errors due to several approximations into account, we choose $\rho=2$ to stop the iterations in time, i.e. $\rho\delta=1.33\times 10^{-7}.$

We test the performance of the PQN method in comparison to the attenuated Landweber method
\begin{align}
    \label{LandweberStep}
    \pmb{\beta}^{(k+1)}=\pmb{\beta}^{(k)}-\lambda \nabla f(\pmb{\beta}^{(k)})
\end{align}
with constant damping factor $\lambda>0,$ which is known to be very stable but slow. Because we do not expect that the iteration will stop via \eqref{diskrepanzPrinzip}, we set the maximum iteration count to $K=10,000.$

The numerical results are included in Figure \ref{fig:heatFluxesResults}.

\begin{center}
\def\svgwidth{\textwidth}
\begingroup%
  \makeatletter%
  \providecommand\color[2][]{%
    \errmessage{(Inkscape) Color is used for the text in Inkscape, but the package 'color.sty' is not loaded}%
    \renewcommand\color[2][]{}%
  }%
  \providecommand\transparent[1]{%
    \errmessage{(Inkscape) Transparency is used (non-zero) for the text in Inkscape, but the package 'transparent.sty' is not loaded}%
    \renewcommand\transparent[1]{}%
  }%
  \providecommand\rotatebox[2]{#2}%
  \newcommand*\fsize{\dimexpr\f@size pt\relax}%
  \newcommand*\lineheight[1]{\fontsize{\fsize}{#1\fsize}\selectfont}%
  \ifx\svgwidth\undefined%
    \setlength{\unitlength}{1226.99998454bp}%
    \ifx\svgscale\undefined%
      \relax%
    \else%
      \setlength{\unitlength}{\unitlength * \real{\svgscale}}%
    \fi%
  \else%
    \setlength{\unitlength}{\svgwidth}%
  \fi%
  \global\let\svgwidth\undefined%
  \global\let\svgscale\undefined%
  \makeatother%
  \begin{picture}(1,0.6601467)%
    \lineheight{1}%
    \setlength\tabcolsep{0pt}%
    \put(0,0){\includegraphics[width=\unitlength,page=1]{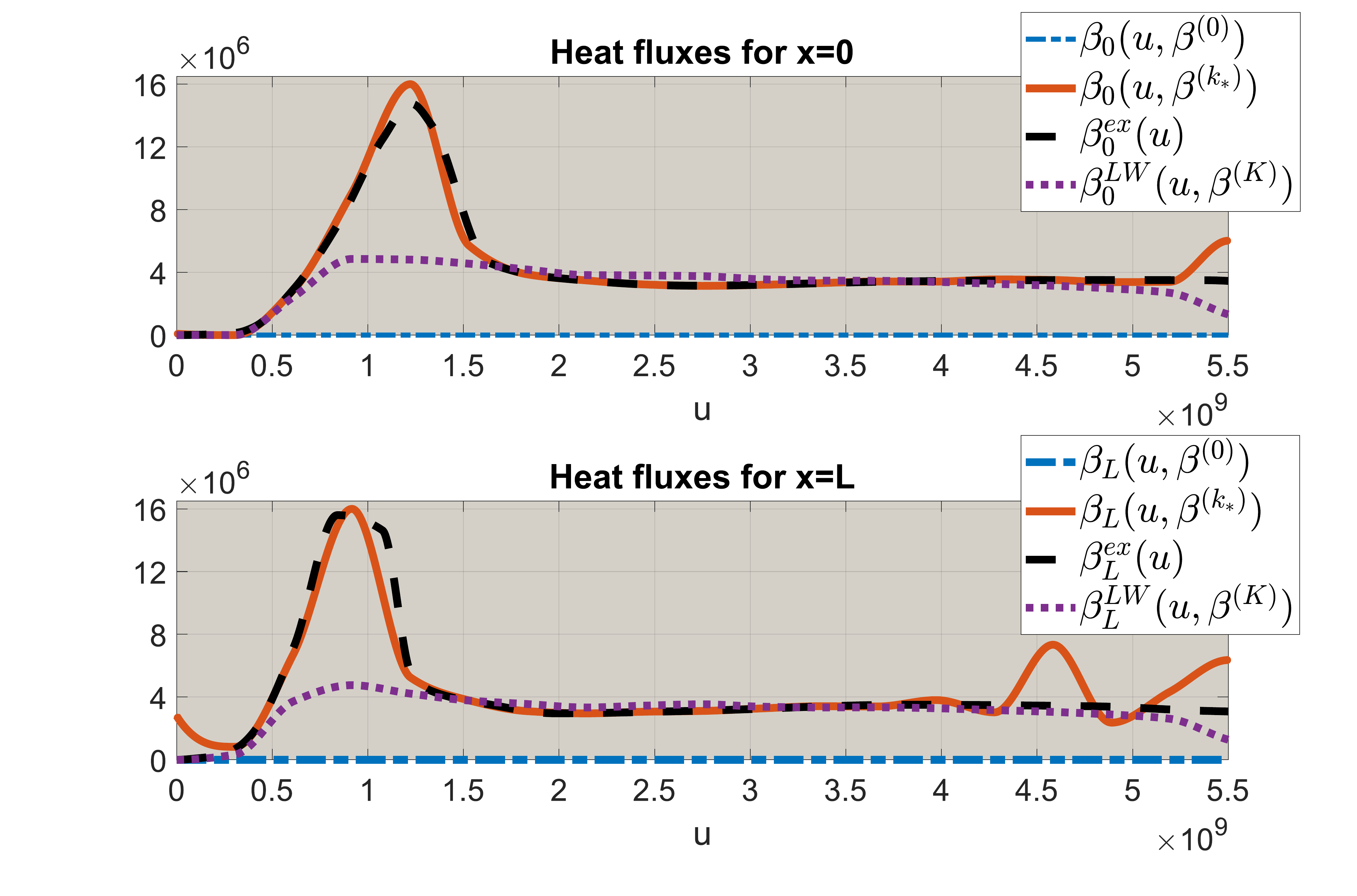}}%
  \end{picture}%
\endgroup%

\captionof{figure}{Numerical results and comparison of exact heat fluxes to PCHIP interpolants given (a) the initial parameter $\pmb{\beta}^{(0)}$, (b) the optimized parameter $\pmb{\beta}^{(k_*)}$ via the PQN method, (c) the optimized parameter $\pmb{\beta}^{(K)}$ via the attenuated Landweber method with $K=10,000$ iterations}
\label{fig:heatFluxesResults}
\end{center}

\begin{center}
\def\svgwidth{350pt}
\begingroup%
  \makeatletter%
  \providecommand\color[2][]{%
    \errmessage{(Inkscape) Color is used for the text in Inkscape, but the package 'color.sty' is not loaded}%
    \renewcommand\color[2][]{}%
  }%
  \providecommand\transparent[1]{%
    \errmessage{(Inkscape) Transparency is used (non-zero) for the text in Inkscape, but the package 'transparent.sty' is not loaded}%
    \renewcommand\transparent[1]{}%
  }%
  \providecommand\rotatebox[2]{#2}%
  \newcommand*\fsize{\dimexpr\f@size pt\relax}%
  \newcommand*\lineheight[1]{\fontsize{\fsize}{#1\fsize}\selectfont}%
  \ifx\svgwidth\undefined%
    \setlength{\unitlength}{1227bp}%
    \ifx\svgscale\undefined%
      \relax%
    \else%
      \setlength{\unitlength}{\unitlength * \real{\svgscale}}%
    \fi%
  \else%
    \setlength{\unitlength}{\svgwidth}%
  \fi%
  \global\let\svgwidth\undefined%
  \global\let\svgscale\undefined%
  \makeatother%
  \begin{picture}(1,0.6601467)%
    \lineheight{1}%
    \setlength\tabcolsep{0pt}%
    \put(0,0){\includegraphics[width=\unitlength,page=1]{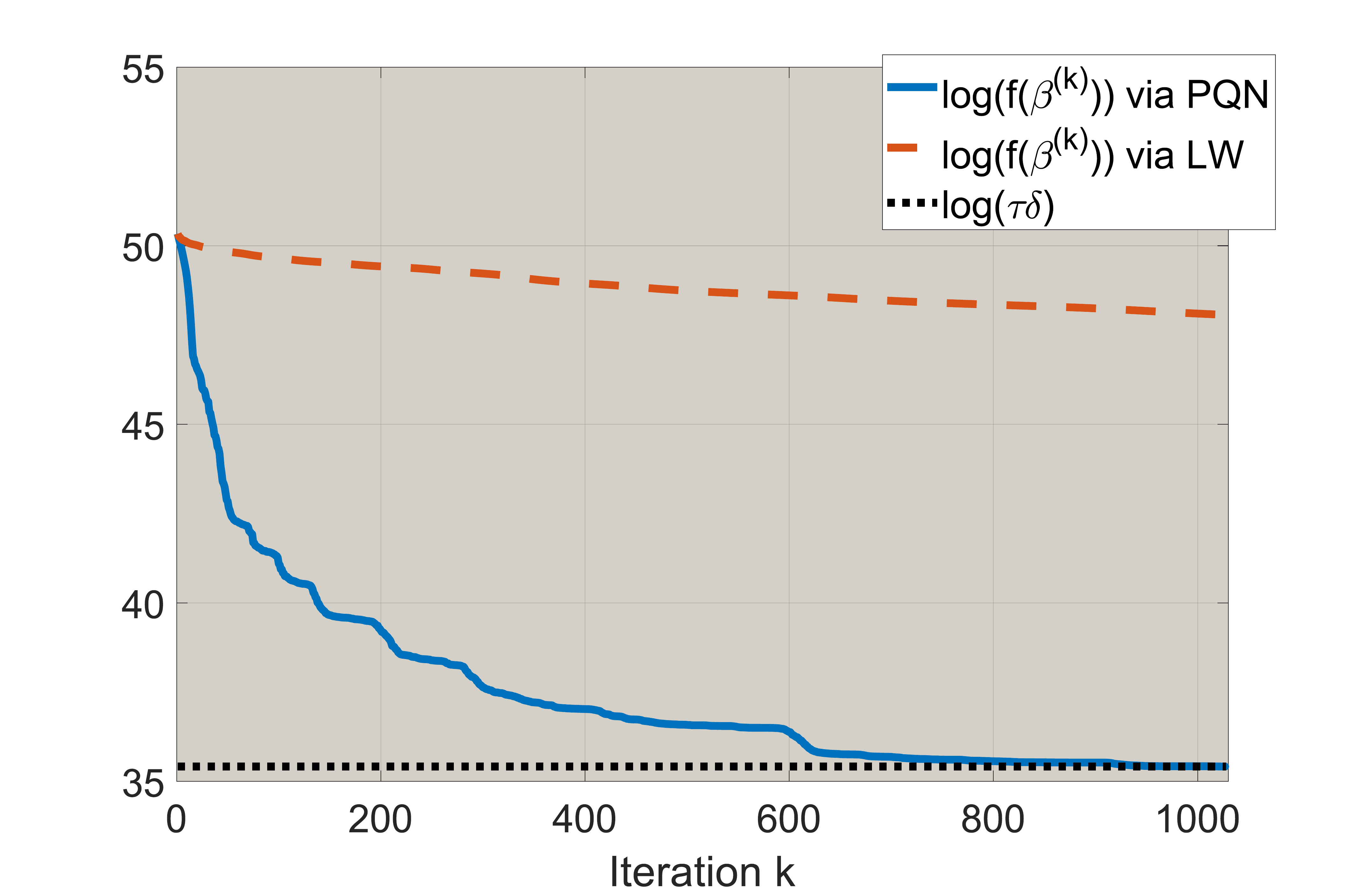}}%
  \end{picture}%
\endgroup%

\captionof{figure}{Logarithm of the residuals for the first $k_*=1028$ iterations and the stopping criteria threshold}
\label{fig:logResiduals}
\end{center}

In Figure \ref{fig:logResiduals}, plotting the logarithm of the residuals \eqref{objFunctional}, we observe that the PQN method stops at iteration $k_*=1028$. The attenuated Landweber method does not match the discrepancy principle, even after $K=10,000$ iterations. Increasing the damping factor $\lambda>0$ in \eqref{LandweberStep} to speed up the Landweber process, we experienced numerical instabilities. Instead of fixing $\lambda$ we even performed the backtracking line search algorithm, but did not realize any significant improvements for the Landweber method. The PQN method is superior due to the involvement of curvature information. Figure \ref{fig:heatFluxesResults} shows, that the interpolants $\beta_0(u,\pmb{\beta}^{(k_*)})$ and $\beta_L(u,\pmb{\beta}^{(k_*)})$ are a very good fit to the exact heat fluxes. The oscillations on the right-hand side can be explained due to the noisy data and a bigger number of approximation steps that have to be performed for the PQN method. Because the main information about the Leidenfrost effect lies in the peaks on the left-hand side, these errors are negligible, especially, when considering the significant reduction of the computational time.

\section{Conclusion}

Initially given an implicitly defined forward operator, we were able to numerically determine enthalpy-dependent
heat fluxes from internal measurements in a nonlinear inverse heat convection problem. To this end, we analyzed two different approaches in showing existence and uniqueness to solutions of the underlying IBVP. By proposing a convenient parametrization method using PCHIPs, we overcame the implicitness of the problem and defined a decoupled parameter-to-solution operator $S.$ Applying the method, we did not limit the scope of the inverse problem by prescribing the structure of the solution too much. Hence, we were able to identify continuously differentiable heat fluxes without any a-priori information. We derived the necessary ingredients in order to compute the gradient of the objective functional. Finally, we utilized the PQN method in order to accelerate the computations drastically compared to the attenuated Landweber method, while still obtaining remarkable numerical results.


\begin{appendix}
\section{PCHIPs - Piecewise Cubic Hermite Interpolating Polynomials}\label{pchips}
The interpolation method of Fritsch and Carlson was published in 1980 in \cite{fritschCarlson}. The method is included in several software packages for programming languages such as Matlab, R and Python. In the following we discuss the construction details of PCHIPs, their approximation property and the calculation of their gradients with respect to the function values.

\subsection{Construction details}\label{pchipsKonstruktion}
Let $I=[a,b]\in \mathbb{R}$, $\mathbb{N}\ni n>1$ and $$\pi_n:a=x_1< x_2< \ldots < x_n=b$$ be a partition of the interval $I$.
Furthermore, let $\{ f_i:i=1,2, \ldots, n  \}$ be the given set of data points on the partition knots
$\{ x_i:i=1,2, \ldots, n  \}$. The goal is to construct a piecewise cubic function $p(x) \in \mathcal{C}^1[I]$ on $\pi_n$, which is monotone on every subinterval $I_i=[x_i, x_{i+1}]$, and such that
\begin{align*}
p(x_i)=f_i, \qquad i=1,2,\ldots, n.
\end{align*}
The monotonicity requirement on the so-called PCHIP interpolant $p(x)$ then guarantees, that no overshoots or increased oscillations occur in the graph of the function, cf. Figure \ref{fig:pchipVsSpline}.

For $x\in I_i$, the function $p(x)$ can be represented as a cubic polynomial by
\begin{align*}
p(x)=f_i H^i_1(x)+f_{i+1}H_2^i(x) + d_iH_3^i(x)+d_{i+1} H_4^i(x),
\end{align*}
where $d_j=p'(x_j)$ for $j=i,i+1$. 
The functions $H_k^i(x)$, $k=1,\dots,4$, are the classical cubic Hermite basis functions for the interval $I_i$ given by
\begin{align}
H^i_1(x)=\phi((x_{i+1}-x)/h_i), \qquad &H^i_2(x)=\phi((x-x_i)/h_i),\notag \\
H^i_3(x)=-h_i \psi ((x_{i+1}-x)/h_i), \qquad &H^i_4(x)=h_i \psi((x-x_i)/h_i), \label{lab:hermiteBasisfunctions}
\end{align}
where $h_i=x_{i+1}-x_i$, $\phi(t)=3t^2-2t^3$ and $\psi(t)=t^3-t^2$, cf. Figure \ref{fig:hermiteBasisfunctions}.

\begin{figure}[h]
    \centering
        \def\svgwidth{\textwidth}
    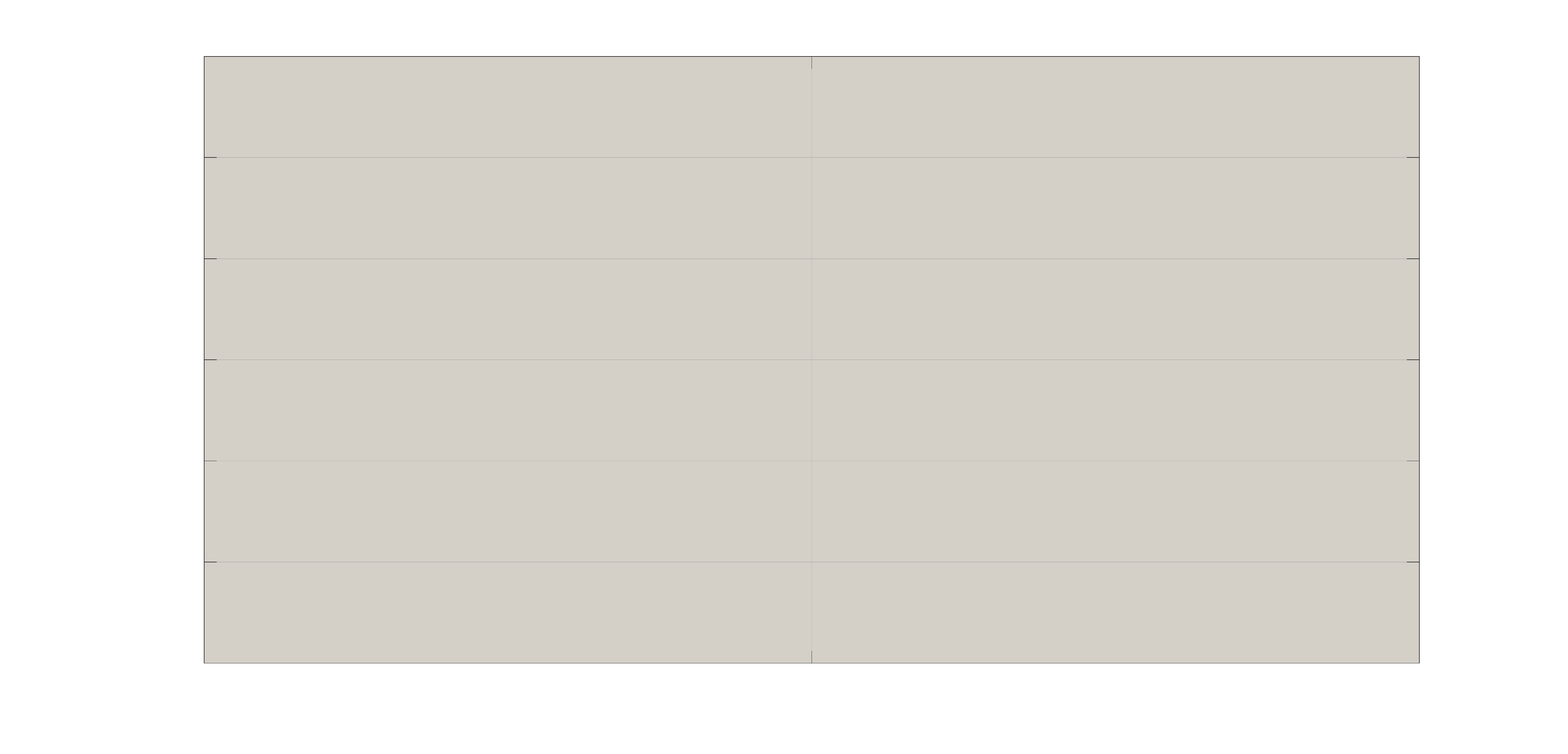
    \captionof{figure}{Hermite basis functions \eqref{lab:hermiteBasisfunctions} on the interval $I_i=[x_i,x_{i+1}]$ for $h_i := 1$}
    \label{fig:hermiteBasisfunctions}
\end{figure}

The construction of the piecewise cubic interpolant consists essentially in the approximation of the derivatives $d_1,\dots,d_n$, for which there are various possibilities, see \cite{fritschCarlson}.
If we consider an equidistant partition $\pi_n$ of the interval $I$ with $h_i=h_j=:h$ for all $i,j=1,2,\ldots, n$, we can calculate the derivatives $d_1, \ldots, d_n$ in this article by
\begin{align*}
d_1=\frac{3}{2}\Delta_1-\frac{1}{2}\Delta_2, \qquad \ldots \qquad d_{k+1}=\frac{2|\Delta_k\Delta_{k+1}|}{\Delta_k + \Delta_{k+1}}, \qquad \ldots \qquad d_n=\frac{3}{2}\Delta_{n-1}-\frac{1}{2}\Delta_{n-2}, 
\end{align*}
where $\Delta_i=\frac{f_{i+1}-f_i}{h}$ for all $i=1,\ldots, n$. In the original work by Fritsch and Carlson, it is assumed that the data points are monotone throughout the whole set, i.e. $f_i\leq f_{i+1}$ or $f_i\geq f_{i+1}$ for all $i=1,\dots,n-1$. However, this requirement is no longer necessary provided the modification
\vspace{2mm}\\ \
If $\sgn(\Delta_k\Delta_{k+1})<0$, set $d_{k+1}:=0.$
\vspace{2mm}\\

Hence, the PCHIP approach proposed in this paper in order to interpolate unknown functions respects the monotonicity of the function values, i.e. for $i=1,\dots,n-1$ we have $f_i\leq p(x)\leq f_{i+1}$ or $f_i\geq p(x)\geq f_{i+1}$ for all $x\in [x_i,x_{i+1}]$.
Thus, the PCHIP interpolant $p(x)$, in comparison to the interpolant of the cubic spline interpolation, does not allow overshoots or oscillations. This is important because we use a projection method in this paper and the searched functions must satisfy certain box constraints. In Figure \ref{fig:pchipVsSpline} we use a simple example to show that the spline interpolant violates the box constraints on both the top and bottom side. Note that the cubic spline interpolant is in $\mathcal{C}^2$, while the PCHIP interpolant is only in $\mathcal{C}^1$.   

\begin{figure}[h]
    \centering
        \def\svgwidth{\textwidth}
    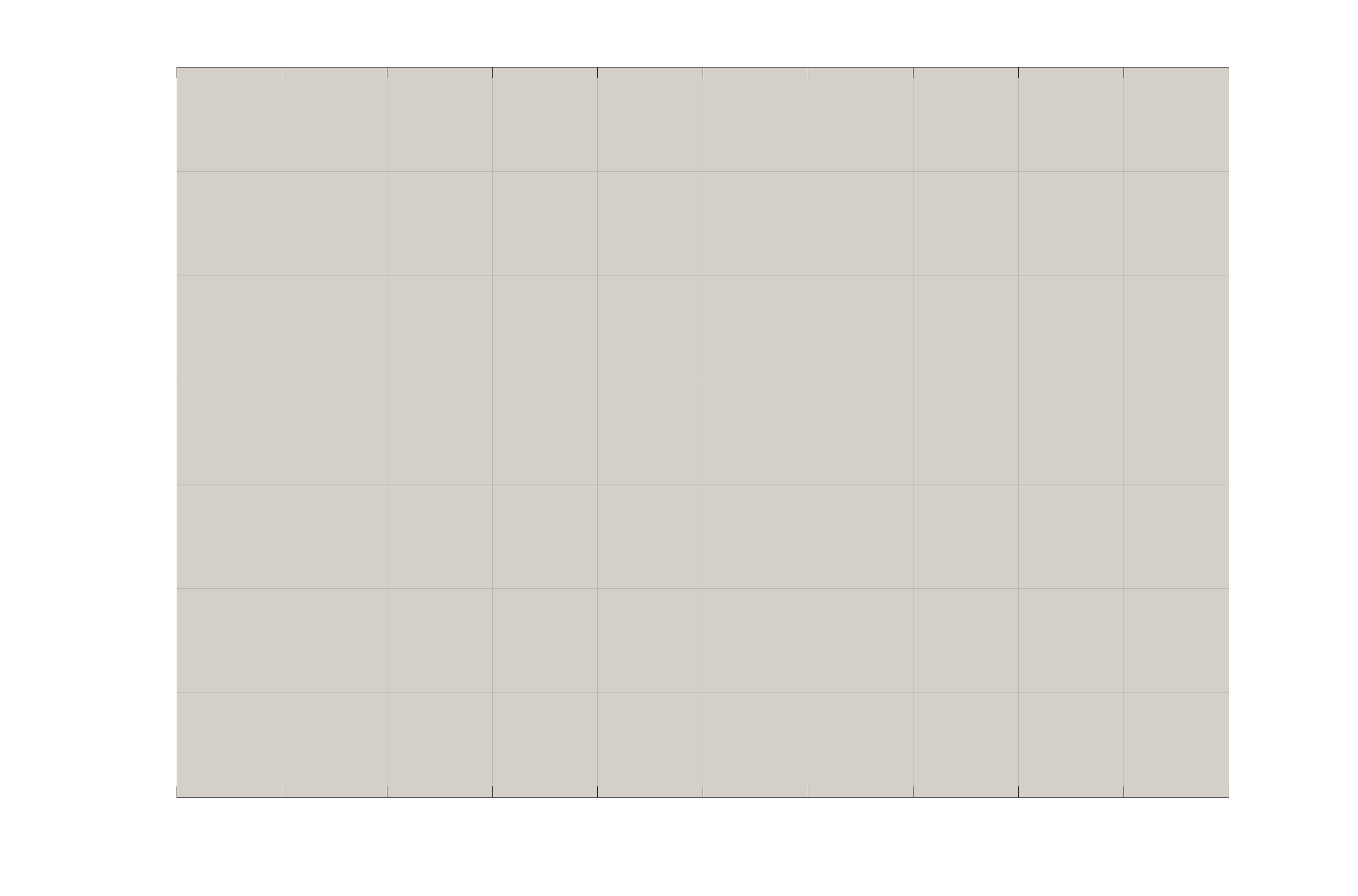
    \captionof{figure}{Example of data points $(x_i,f_i)$ with $x_i=i-1$ and $f_i\in [0,10]$ for $i=1,\dots,11$,\\ comparison PCHIP vs. cubic splines}
    \label{fig:pchipVsSpline}
\end{figure}

\subsection{Approximation property}\label{pchipsApproximationseigenschaft}

Let $f(x)\in \mathcal{C}^1([a,b])$. In the following we present a sketch of the proof to show that 
\begin{align}
\forall \varepsilon>0 \ \exists n\in \mathbb{N}: \ \max\limits_{x\in [a,b]}|p(x)-f(x)|< \varepsilon,
\end{align}
where $p(x)$ is the PCHIP interpolant to the equidistant partition
\begin{align}
\label{partitionAnhang}
\pi_n:a=x_1< x_2< \ldots < x_n=b,
\end{align}
which fulfills
\begin{align*}
p(x_j)=f(x_j)
\end{align*}
for $j=1,\dots,n$. \vspace{5mm}\\The idea is based on nested partitions. Let $\varepsilon>0$ be fixed. For $i=1,2,\ldots$ let
\begin{align*}
h^{(i)}=2^{-i}(b-a)
\end{align*} be the step size of the $i$-th partition
\begin{align*}
\pi^{(i)}:= \{x_j^{(i)}=a+(j-1)h^{(i)}: j=1,\ldots,2^i+1\},
\end{align*}
such that $\pi^{(i)}\subset \pi^{(i+1)}$, i.e. each subsequent partition always refines the previous one. The corresponding PCHIP interpolants for the $i$-th partition are given by $p^{(i)}(x)\in \mathcal{C}^1([a,b])$, such that
\begin{align*}
p^{(i)}(x_j^{(i)})=f(x_j^{(i)})
\end{align*}
for all $j=1,\ldots,2^i+1$. The set 
\begin{align*}
X^{(i)}=\{x\in [a,b]: |p^{(i)}(x)-f(x)|\geq \varepsilon\}
\end{align*}
includes all points $x\in [a,b]$ where the PCHIP interpolant $p^{(i)}(x)$ approximates the function $f(x)$ insufficiently.
 Because of 
\begin{align*}
\lim\limits_{i\to \infty}h^{(i)}=0
\end{align*}
and the monotonicity of the PCHIP interpolants $p^{(i)}(x)$ with respect to the function values $f(x_j^{(i)})$ for all $j=1,\ldots,n$, we conclude that
\begin{align*}
\lim\limits_{i\to\infty} \min\limits_{j}|x_j^{(i)}-x|=0
\end{align*}
and
\begin{align*}
\lim\limits_{i\to \infty} p^{(i)}(x) =f(x)
\end{align*}
holds for all $x\in [a,b]$. This implies the existence of an index $i_* \in \mathbb{N}$ with 
\begin{align*}
\max\limits_{x\in [a,b]}|p^{(i_*)}(x)-f(x)|<\varepsilon
\end{align*}
and $X^{(i_*)}=\{\}$. Therefore, a PCHIP interpolant $p(x)$ exists for all $\varepsilon>0$ for the partition \eqref{partitionAnhang} with $n=2^{i_*}+1$ knots and function values $f(x_j)$ for $j=1,\ldots,n$. Specifically we have
\begin{align*}
x_j=a+(j-1)\frac{b-a}{n-1}.
\end{align*}

A numerical example for determining the number $n$ of partition points corresponding to $\varepsilon=0.2$ is illustrated in Figure \ref{fig:pchipsApproximation}: 

Successively increasing $i=1$ by $i:=i+1$ yields $i_*=4$ with $X^{(i_*)}=\{\}$, i.e. $n=2^{i_*}+1=17$ partition points for the PCHIP interpolant $p(x)$.
The example can obviously be extended for any $\varepsilon>0$. So the approximation accuracy can be controlled by adjusting the number $n$ of partition points.\\

\begin{center}
        \def\svgwidth{\textwidth}
\begingroup%
  \makeatletter%
  \providecommand\color[2][]{%
    \errmessage{(Inkscape) Color is used for the text in Inkscape, but the package 'color.sty' is not loaded}%
    \renewcommand\color[2][]{}%
  }%
  \providecommand\transparent[1]{%
    \errmessage{(Inkscape) Transparency is used (non-zero) for the text in Inkscape, but the package 'transparent.sty' is not loaded}%
    \renewcommand\transparent[1]{}%
  }%
  \providecommand\rotatebox[2]{#2}%
  \newcommand*\fsize{\dimexpr\f@size pt\relax}%
  \newcommand*\lineheight[1]{\fontsize{\fsize}{#1\fsize}\selectfont}%
  \ifx\svgwidth\undefined%
    \setlength{\unitlength}{1195.5155489bp}%
    \ifx\svgscale\undefined%
      \relax%
    \else%
      \setlength{\unitlength}{\unitlength * \real{\svgscale}}%
    \fi%
  \else%
    \setlength{\unitlength}{\svgwidth}%
  \fi%
  \global\let\svgwidth\undefined%
  \global\let\svgscale\undefined%
  \makeatother%
  \begin{picture}(1,1.0570342)%
    \lineheight{1}%
    \setlength\tabcolsep{0pt}%
    \put(0,0){\includegraphics[width=\unitlength,page=1]{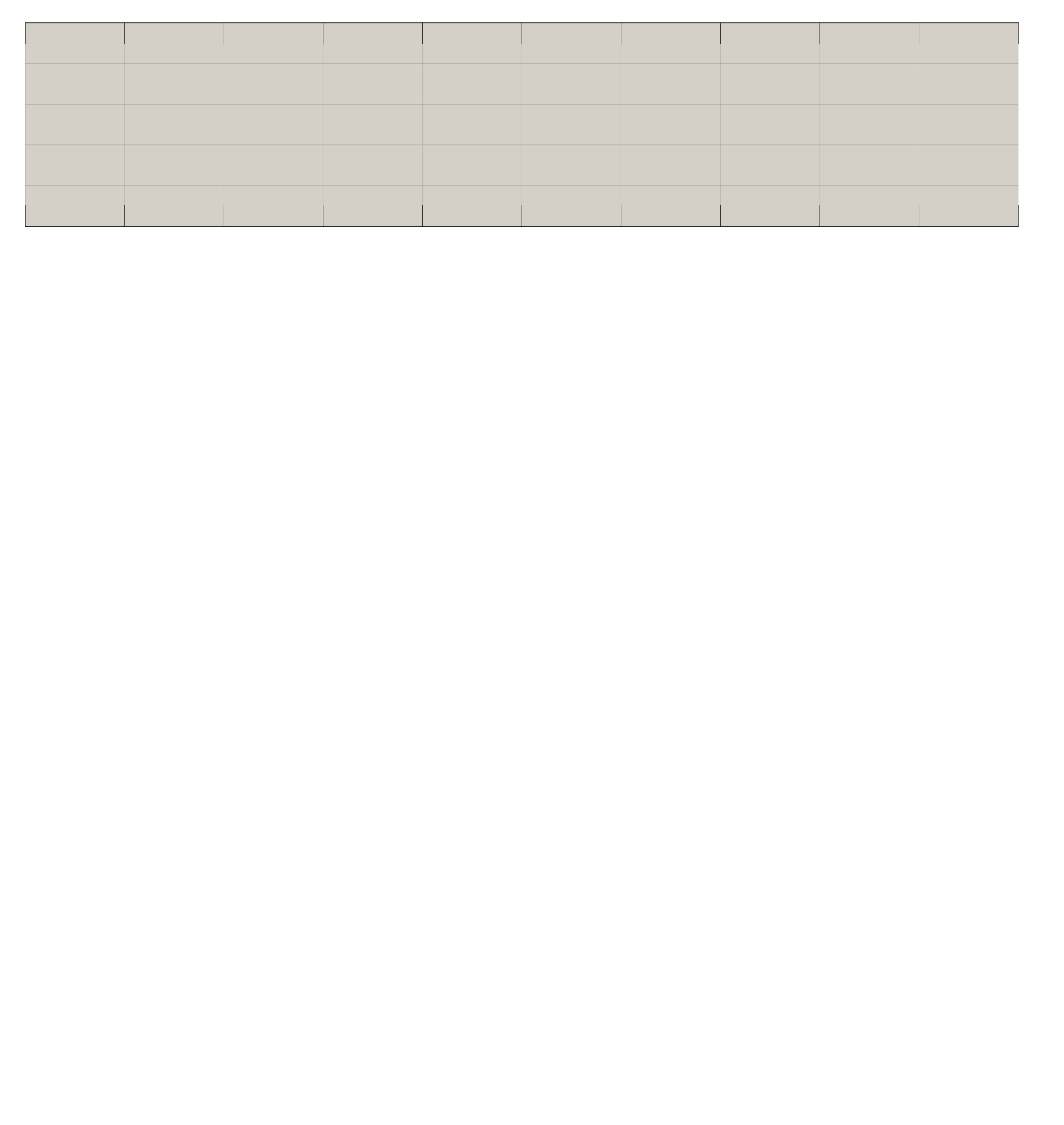}}%
    \put(0.02129247,0.80230442){\makebox(0,0)[lt]{\lineheight{1.25}\smash{\begin{tabular}[t]{l}0\end{tabular}}}}%
    \put(0.11464132,0.80230442){\makebox(0,0)[lt]{\lineheight{1.25}\smash{\begin{tabular}[t]{l}1\end{tabular}}}}%
    \put(0.20799017,0.80230442){\makebox(0,0)[lt]{\lineheight{1.25}\smash{\begin{tabular}[t]{l}2\end{tabular}}}}%
    \put(0.30133902,0.80230442){\makebox(0,0)[lt]{\lineheight{1.25}\smash{\begin{tabular}[t]{l}3\end{tabular}}}}%
    \put(0.39468786,0.80230442){\makebox(0,0)[lt]{\lineheight{1.25}\smash{\begin{tabular}[t]{l}4\end{tabular}}}}%
    \put(0.48803671,0.80230442){\makebox(0,0)[lt]{\lineheight{1.25}\smash{\begin{tabular}[t]{l}5\end{tabular}}}}%
    \put(0.58138556,0.80230442){\makebox(0,0)[lt]{\lineheight{1.25}\smash{\begin{tabular}[t]{l}6\end{tabular}}}}%
    \put(0.6747344,0.80230442){\makebox(0,0)[lt]{\lineheight{1.25}\smash{\begin{tabular}[t]{l}7\end{tabular}}}}%
    \put(0.76808325,0.80230442){\makebox(0,0)[lt]{\lineheight{1.25}\smash{\begin{tabular}[t]{l}8\end{tabular}}}}%
    \put(0.8614321,0.80230442){\makebox(0,0)[lt]{\lineheight{1.25}\smash{\begin{tabular}[t]{l}9\end{tabular}}}}%
    \put(0.95258524,0.80230442){\makebox(0,0)[lt]{\lineheight{1.25}\smash{\begin{tabular}[t]{l}10\end{tabular}}}}%
    \put(0,0){\includegraphics[width=\unitlength,page=2]{pchipsApproximation.pdf}}%
    \put(-0.00045549,0.83721331){\makebox(0,0)[lt]{\lineheight{1.25}\smash{\begin{tabular}[t]{l}0\end{tabular}}}}%
    \put(-0.00045549,0.87543494){\makebox(0,0)[lt]{\lineheight{1.25}\smash{\begin{tabular}[t]{l}2\end{tabular}}}}%
    \put(-0.00045549,0.91365658){\makebox(0,0)[lt]{\lineheight{1.25}\smash{\begin{tabular}[t]{l}4\end{tabular}}}}%
    \put(-0.00045549,0.95187821){\makebox(0,0)[lt]{\lineheight{1.25}\smash{\begin{tabular}[t]{l}6\end{tabular}}}}%
    \put(-0.00045549,0.99009984){\makebox(0,0)[lt]{\lineheight{1.25}\smash{\begin{tabular}[t]{l}8\end{tabular}}}}%
    \put(-0.0048469,1.02832148){\makebox(0,0)[lt]{\lineheight{1.25}\smash{\begin{tabular}[t]{l}10\end{tabular}}}}%
    \put(0,0){\includegraphics[width=\unitlength,page=3]{pchipsApproximation.pdf}}%
    \put(0.02129247,0.53789449){\makebox(0,0)[lt]{\lineheight{1.25}\smash{\begin{tabular}[t]{l}0\end{tabular}}}}%
    \put(0.11464132,0.53789449){\makebox(0,0)[lt]{\lineheight{1.25}\smash{\begin{tabular}[t]{l}1\end{tabular}}}}%
    \put(0.20799017,0.53789449){\makebox(0,0)[lt]{\lineheight{1.25}\smash{\begin{tabular}[t]{l}2\end{tabular}}}}%
    \put(0.30133902,0.53789449){\makebox(0,0)[lt]{\lineheight{1.25}\smash{\begin{tabular}[t]{l}3\end{tabular}}}}%
    \put(0.39468786,0.53789449){\makebox(0,0)[lt]{\lineheight{1.25}\smash{\begin{tabular}[t]{l}4\end{tabular}}}}%
    \put(0.48803671,0.53789449){\makebox(0,0)[lt]{\lineheight{1.25}\smash{\begin{tabular}[t]{l}5\end{tabular}}}}%
    \put(0.58138556,0.53789449){\makebox(0,0)[lt]{\lineheight{1.25}\smash{\begin{tabular}[t]{l}6\end{tabular}}}}%
    \put(0.6747344,0.53789449){\makebox(0,0)[lt]{\lineheight{1.25}\smash{\begin{tabular}[t]{l}7\end{tabular}}}}%
    \put(0.76808325,0.53789449){\makebox(0,0)[lt]{\lineheight{1.25}\smash{\begin{tabular}[t]{l}8\end{tabular}}}}%
    \put(0.8614321,0.53789449){\makebox(0,0)[lt]{\lineheight{1.25}\smash{\begin{tabular}[t]{l}9\end{tabular}}}}%
    \put(0.95258524,0.53789449){\makebox(0,0)[lt]{\lineheight{1.25}\smash{\begin{tabular}[t]{l}10\end{tabular}}}}%
    \put(0,0){\includegraphics[width=\unitlength,page=4]{pchipsApproximation.pdf}}%
    \put(-0.00045549,0.57280339){\makebox(0,0)[lt]{\lineheight{1.25}\smash{\begin{tabular}[t]{l}0\end{tabular}}}}%
    \put(-0.00045549,0.61102502){\makebox(0,0)[lt]{\lineheight{1.25}\smash{\begin{tabular}[t]{l}2\end{tabular}}}}%
    \put(-0.00045549,0.64924665){\makebox(0,0)[lt]{\lineheight{1.25}\smash{\begin{tabular}[t]{l}4\end{tabular}}}}%
    \put(-0.00045549,0.68746828){\makebox(0,0)[lt]{\lineheight{1.25}\smash{\begin{tabular}[t]{l}6\end{tabular}}}}%
    \put(-0.00045549,0.72568992){\makebox(0,0)[lt]{\lineheight{1.25}\smash{\begin{tabular}[t]{l}8\end{tabular}}}}%
    \put(-0.0048469,0.76391155){\makebox(0,0)[lt]{\lineheight{1.25}\smash{\begin{tabular}[t]{l}10\end{tabular}}}}%
    \put(0,0){\includegraphics[width=\unitlength,page=5]{pchipsApproximation.pdf}}%
    \put(0.02129247,0.27348457){\makebox(0,0)[lt]{\lineheight{1.25}\smash{\begin{tabular}[t]{l}0\end{tabular}}}}%
    \put(0.11464132,0.27348457){\makebox(0,0)[lt]{\lineheight{1.25}\smash{\begin{tabular}[t]{l}1\end{tabular}}}}%
    \put(0.20799017,0.27348457){\makebox(0,0)[lt]{\lineheight{1.25}\smash{\begin{tabular}[t]{l}2\end{tabular}}}}%
    \put(0.30133902,0.27348457){\makebox(0,0)[lt]{\lineheight{1.25}\smash{\begin{tabular}[t]{l}3\end{tabular}}}}%
    \put(0.39468786,0.27348457){\makebox(0,0)[lt]{\lineheight{1.25}\smash{\begin{tabular}[t]{l}4\end{tabular}}}}%
    \put(0.48803671,0.27348457){\makebox(0,0)[lt]{\lineheight{1.25}\smash{\begin{tabular}[t]{l}5\end{tabular}}}}%
    \put(0.58138556,0.27348457){\makebox(0,0)[lt]{\lineheight{1.25}\smash{\begin{tabular}[t]{l}6\end{tabular}}}}%
    \put(0.6747344,0.27348457){\makebox(0,0)[lt]{\lineheight{1.25}\smash{\begin{tabular}[t]{l}7\end{tabular}}}}%
    \put(0.76808325,0.27348457){\makebox(0,0)[lt]{\lineheight{1.25}\smash{\begin{tabular}[t]{l}8\end{tabular}}}}%
    \put(0.8614321,0.27348457){\makebox(0,0)[lt]{\lineheight{1.25}\smash{\begin{tabular}[t]{l}9\end{tabular}}}}%
    \put(0.95258524,0.27348457){\makebox(0,0)[lt]{\lineheight{1.25}\smash{\begin{tabular}[t]{l}10\end{tabular}}}}%
    \put(0,0){\includegraphics[width=\unitlength,page=6]{pchipsApproximation.pdf}}%
    \put(-0.00045549,0.30839346){\makebox(0,0)[lt]{\lineheight{1.25}\smash{\begin{tabular}[t]{l}0\end{tabular}}}}%
    \put(-0.00045549,0.3463533){\makebox(0,0)[lt]{\lineheight{1.25}\smash{\begin{tabular}[t]{l}2\end{tabular}}}}%
    \put(-0.00045549,0.38431314){\makebox(0,0)[lt]{\lineheight{1.25}\smash{\begin{tabular}[t]{l}4\end{tabular}}}}%
    \put(-0.00045549,0.42227298){\makebox(0,0)[lt]{\lineheight{1.25}\smash{\begin{tabular}[t]{l}6\end{tabular}}}}%
    \put(-0.00045549,0.46023282){\makebox(0,0)[lt]{\lineheight{1.25}\smash{\begin{tabular}[t]{l}8\end{tabular}}}}%
    \put(-0.0048469,0.49819266){\makebox(0,0)[lt]{\lineheight{1.25}\smash{\begin{tabular}[t]{l}10\end{tabular}}}}%
    \put(0,0){\includegraphics[width=\unitlength,page=7]{pchipsApproximation.pdf}}%
    \put(0.02129247,0.00907464){\makebox(0,0)[lt]{\lineheight{1.25}\smash{\begin{tabular}[t]{l}0\end{tabular}}}}%
    \put(0.11464132,0.00907464){\makebox(0,0)[lt]{\lineheight{1.25}\smash{\begin{tabular}[t]{l}1\end{tabular}}}}%
    \put(0.20799017,0.00907464){\makebox(0,0)[lt]{\lineheight{1.25}\smash{\begin{tabular}[t]{l}2\end{tabular}}}}%
    \put(0.30133902,0.00907464){\makebox(0,0)[lt]{\lineheight{1.25}\smash{\begin{tabular}[t]{l}3\end{tabular}}}}%
    \put(0.39468786,0.00907464){\makebox(0,0)[lt]{\lineheight{1.25}\smash{\begin{tabular}[t]{l}4\end{tabular}}}}%
    \put(0.48803671,0.00907464){\makebox(0,0)[lt]{\lineheight{1.25}\smash{\begin{tabular}[t]{l}5\end{tabular}}}}%
    \put(0.58138556,0.00907464){\makebox(0,0)[lt]{\lineheight{1.25}\smash{\begin{tabular}[t]{l}6\end{tabular}}}}%
    \put(0.6747344,0.00907464){\makebox(0,0)[lt]{\lineheight{1.25}\smash{\begin{tabular}[t]{l}7\end{tabular}}}}%
    \put(0.76808325,0.00907464){\makebox(0,0)[lt]{\lineheight{1.25}\smash{\begin{tabular}[t]{l}8\end{tabular}}}}%
    \put(0.8614321,0.00907464){\makebox(0,0)[lt]{\lineheight{1.25}\smash{\begin{tabular}[t]{l}9\end{tabular}}}}%
    \put(0.95258524,0.00907464){\makebox(0,0)[lt]{\lineheight{1.25}\smash{\begin{tabular}[t]{l}10\end{tabular}}}}%
    \put(0,0){\includegraphics[width=\unitlength,page=8]{pchipsApproximation.pdf}}%
    \put(-0.00045549,0.04398353){\makebox(0,0)[lt]{\lineheight{1.25}\smash{\begin{tabular}[t]{l}0\end{tabular}}}}%
    \put(-0.00045549,0.08194337){\makebox(0,0)[lt]{\lineheight{1.25}\smash{\begin{tabular}[t]{l}2\end{tabular}}}}%
    \put(-0.00045549,0.11990321){\makebox(0,0)[lt]{\lineheight{1.25}\smash{\begin{tabular}[t]{l}4\end{tabular}}}}%
    \put(-0.00045549,0.15786305){\makebox(0,0)[lt]{\lineheight{1.25}\smash{\begin{tabular}[t]{l}6\end{tabular}}}}%
    \put(-0.00045549,0.1958229){\makebox(0,0)[lt]{\lineheight{1.25}\smash{\begin{tabular}[t]{l}8\end{tabular}}}}%
    \put(-0.0048469,0.23378274){\makebox(0,0)[lt]{\lineheight{1.25}\smash{\begin{tabular}[t]{l}10\end{tabular}}}}%
    \put(0,0){\includegraphics[width=\unitlength,page=9]{pchipsApproximation.pdf}}%
    \put(0.88556428,1.01073415){\color[rgb]{0,0,0}\makebox(0,0)[lt]{\lineheight{1.25}\smash{\begin{tabular}[t]{l}$i=1$\end{tabular}}}}%
    \put(0.8863624,0.74482254){\color[rgb]{0,0,0}\makebox(0,0)[lt]{\lineheight{1.25}\smash{\begin{tabular}[t]{l}$i=2$\end{tabular}}}}%
    \put(0.88576667,0.48142442){\color[rgb]{0,0,0}\makebox(0,0)[lt]{\lineheight{1.25}\smash{\begin{tabular}[t]{l}$i=3$\end{tabular}}}}%
    \put(0.88138879,0.21519495){\color[rgb]{0,0,0}\makebox(0,0)[lt]{\lineheight{1.25}\smash{\begin{tabular}[t]{l}$i_*=4$\end{tabular}}}}%
  \end{picture}%
\endgroup%

    
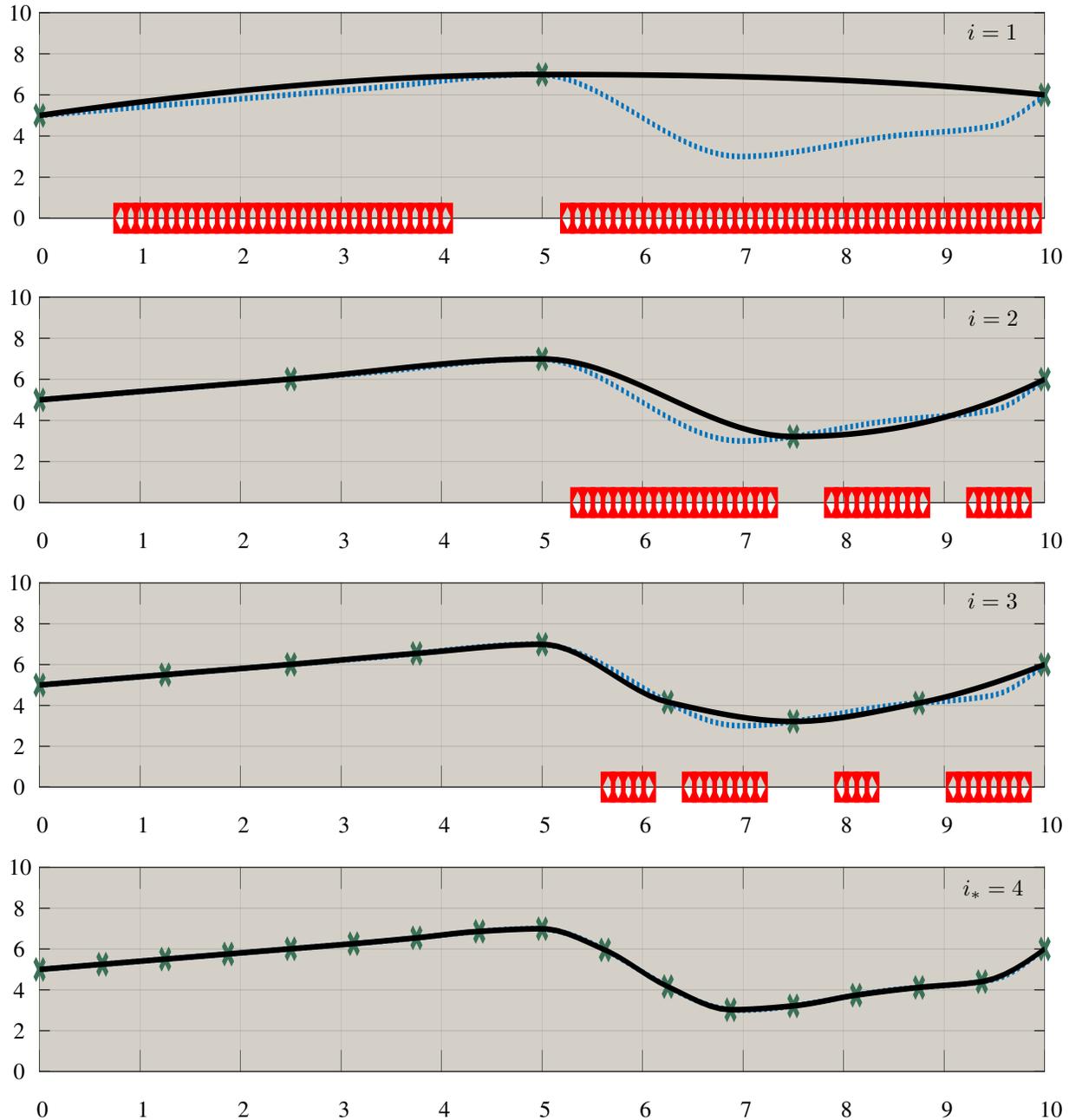
\captionof{figure}{$f(x)$ (dotted blue) vs. PCHIP interpolants $p^{(i)}(x)$ (solid black) for $i=1,\dots,4$;\\ red regions correspond to $X^{(i)}$ for $\varepsilon=0.2$}
    \label{fig:pchipsApproximation}
\end{center}

\subsection{Calculation of the gradient}\label{pchipsGradientenberechnung}

Finally we look at the sensitivity of $p(x)$ with respect to $f_i$ for all $i=1,2,\ldots, n$, that means we compute $\frac{\partial p}{\partial f_i}(x)$ for all $i=1,\ldots, n$. Here we assume without loss of generality that $\sgn(\Delta_k \Delta_{k+1})>0$ holds true. (The derivations are simplified if $d_i=0$ for some $i=1,\ldots,n$.)
Since $p(x)$ is a piecewise defined function, we consider the derivatives for certain subintervals $I_i=[x_i, x_{i+1}]$. 
In the following, the individual derivatives are listed, whereby we first consider the special cases depending on $d_1$, i.e. $\frac{\partial p}{\partial f_i}(x)$ for $i=1,\ 2,\ 3$:

\begin{align*}
\frac{\partial p}{\partial f_1}(x)= \left\{\begin{array}{ll} H_1^1(x)-\frac{3}{2h} H_3^1(x)-\frac{2}{h}\frac{\Delta_2^2}{(\Delta_1+\Delta_2)^2} H_4^1(x), & x\in [x_1,x_2], \\
         -\frac{2}{h} \frac{\Delta_2^2}{(\Delta_1+\Delta_2)^2} H_3^2(x), & x\in [x_2,x_3],\\
         0, & \text{else}.
          \end{array}\right. 
\end{align*}

\begin{align*}
\frac{\partial p}{\partial f_2}(x)= \left\{\begin{array}{ll} H_2^1(x)+ \frac{2}{h}H_3^1(x)+\frac{2}{h}\frac{\Delta_2-\Delta_1}{\Delta_1+\Delta_2}H_4^1(x), & x\in [x_1,x_2], \\
H_1^2(x)-\frac{2}{h} \frac{\Delta_2-\Delta_1}{\Delta_2+\Delta_3} H_3^2(x)-\frac{2}{h}\frac{\Delta_3^2}{(\Delta_2+\Delta_3)^2} H_4^2(x), & x\in [x_2,x_3], \\
        -\frac{2}{h}\frac{\Delta_3^2}{(\Delta_2+\Delta_3)^2}H_3^3(x) , & x\in [x_3,x_4],\\
         0, & \text{else}.
          \end{array}\right. 
\end{align*}

\begin{align*}
\frac{\partial p}{\partial f_3}(x)= \left\{\begin{array}{ll}\frac{1}{2h}H_3^1(x)+ \frac{2}{h} \frac{\Delta_1^2}{(\Delta_1+\Delta_2)^2} H_4^1(x), & x\in [x_1,x_2], \\
H_2^2(x)+\frac{2}{h}\frac{\Delta_1^2}{(\Delta_1+\Delta_2)^2}H_3^2(x)+\frac{2}{h}\frac{\Delta_3-\Delta_2}{\Delta_2+\Delta_3}H_4^2(x), & x\in [x_2,x_3], \\        
     H_1^3(x)+\frac{2}{h}\frac{\Delta_3-\Delta_2}{\Delta_2+\Delta_3}H_3^3(x)-\frac{2}{h}\frac{\Delta_4^2}{(\Delta_3+\Delta_4)^2}H_4^3(x) , & x\in [x_3,x_4],\\
    -\frac{2}{h}\frac{\Delta_4^2}{(\Delta_3+\Delta_4)^2}H_3^4(x), & x\in [x_4,x_5],\\
              0, & \text{else}.
          \end{array}\right. 
\end{align*}

For $3<i<n-2$:

\begin{align*}
\frac{\partial p}{\partial f_i}(x)= \left\{\begin{array}{ll} 
\frac{2}{h}\frac{\Delta_{i-2}^2}{(\Delta_{i-1}+\Delta_{i-2})^2}H_4^{i-2}(x), & x\in [x_{i-2},x_{i-1}], \\   
H_2^{i-1}(x)+\frac{2}{h}\frac{\Delta_{i-2}^2}{(\Delta_{i-1}+\Delta_{i-2})^2} H_3^{i-1}(x)+\frac{2}{h}\frac{\Delta_i-\Delta_{i-1}}{\Delta_{i-1}+\Delta_i}H_4^{i-1}(x), & x\in [x_{i-1},x_{i}], \\
H_1^i(x)+\frac{2}{h}\frac{\Delta_i-\Delta_{i-1}}{\Delta_{i-1}+\Delta_i}H_3^i(x)-\frac{2}{h}\frac{\Delta_{i+1}^2}{(\Delta_i+\Delta_{i+1})^2}H_4^i(x), & x\in [x_i,x_{i+1}], \\
-\frac{2}{h}\frac{\Delta_{i+1}^2}{(\Delta_i+\Delta_{i+1})^2}H_3^{i+1}(x), & x\in [x_{i+1},x_{i+2}], \\        
              0, & \text{else}.
          \end{array}\right. 
\end{align*}
Other special cases (regarding $d_n$):
\begin{align*}
\frac{\partial p}{\partial f_{n-2}}(x)= \left\{\begin{array}{ll} 
\frac{2}{h}\frac{\Delta_{n-4}^2}{(\Delta_{n-3}+\Delta_{n-4})^2}H_4^{n-4}(x), & x\in [x_{n-4},x_{n-3}], \\
H_2^{n-3}(x)+\frac{2}{h}\frac{\Delta_{n-4}^2}{(\Delta_{n-3}+\Delta_{n-4})^2}H_3^{n-3}(x)+\frac{2}{h}\frac{\Delta_{n-2}-\Delta_{n-3}}{\Delta_{n-3}+\Delta_{n-2}}H_4^{n-3}(x), & x\in [x_{n-3},x_{n-2}], \\
H_1^{n-2}(x)+\frac{2}{h}\frac{\Delta_{n-2}+\Delta_{n-3}}{\Delta_{n-3}+\Delta_{n-2}}H_3^{n-2}(x)-\frac{2}{h}\frac{\Delta_{n-1}^2}{(\Delta_{n-2}+\Delta_{n-1})^2}H_4^{n-2}(x), & x\in [x_{n-2},x_{n-1},] \\
-\frac{2}{h}\frac{\Delta_{n-1}^2}{(\Delta_{n-2}+\Delta_{n-1})^2}H_3^{n-1}(x)+\frac{1}{2h}H_4^{n-1}(x), & x\in [x_{n-1},x_{n}], \\        
              0, & \text{else}.
          \end{array}\right. 
\end{align*}

\begin{align*}
\frac{\partial p}{\partial f_{n-1}}(x)= \left\{\begin{array}{ll} 
\frac{2}{h}\frac{\Delta_{n-3}^2}{(\Delta_{n-2}+\Delta_{n-3})^2}H_4^{n-3}(x), & x\in [x_{n-3},x_{n-2}], \\
H_2^{n-2}(x)+\frac{2}{h}\frac{\Delta_{n-3}^2}{(\Delta_{n-2}+\Delta_{n-3})^2}H_3^{n-2}(x)+\frac{2}{h}\frac{\Delta_{n-1}+\Delta_{n-2}}{\Delta_{n-2}+\Delta_{n-1}}H_4^{n-2}(x), & x\in [x_{n-2},x_{n-1}], \\
H_1^{n-1}(x)+\frac{2}{h}\frac{\Delta_{n-1}-\Delta_{n-2}}{\Delta_{n-2}+\Delta_{n-1}}H_3^{n-1}(x)-\frac{2}{h}H_4^{n-1}(x), & x\in [x_{n-1},x_{n}], \\        
              0, & \text{else}.
          \end{array}\right. 
\end{align*}

\begin{align*}
\frac{\partial p}{\partial f_n}(x)= \left\{\begin{array}{ll} 
\frac{2}{h}\frac{\Delta_{n-2}^2}{(\Delta_{n-1}+\Delta_{n-2})^2}H_4^{n-2}(x), & x\in [x_{n-2},x_{n-1}], \\
H_2^{n-1}(x)+\frac{2}{h}\frac{\Delta_{n-2}^2}{(\Delta_{n-1}+\Delta_{n-2})^2}H_3^{n-1}(x)+\frac{3}{2h}H_4^{n-1}(x), & x\in [x_{n-1},x_{n}], \\        
              0, & \text{else}.
          \end{array}\right. 
\end{align*}
\end{appendix}

\bibliography{references} 

\end{document}